\theoremstyle{plain} % style plain
\newtheorem{theo}{Theorem}[section]
\newtheorem*{theo*}{Theorem} % non numéroté
\newtheorem{coro}[theo]{Corollary}
\newtheorem{prop}[theo]{Proposition}
\newtheorem{lemma}[theo]{Lemma}
\newtheorem{conj}[theo]{Conjecture}
\theoremstyle{definition}
\newtheorem{defi}[theo]{Definition}
\newtheorem{remark}[theo]{Remark}
\newtheorem{ex}[theo]{Example}
\numberwithin{equation}{section}
\newcommand{\eps}{\varepsilon}
\newcommand{\DD}{\Delta}
\newcommand{\h}{\widehat}
\newcommand{\pp}{\Phi^{{\scriptscriptstyle +}}}
\newcommand{\hpp}{\h{\Phi}}
\newcommand{\OO}{\operatorname{O}\nolimits}
\newcommand{\GL}{\operatorname{GL}\nolimits}
\newcommand{\rk}{\operatorname{rk}\nolimits}
\newcommand{\dist}{\operatorname{dist}\nolimits}
\newcommand{\dep}{\operatorname{dp}\nolimits}
\newcommand{\oo}{\infty}
\newcommand{\cone}{\operatorname{cone}}
\newcommand{\conv}{\operatorname{conv}}
\newcommand{\Span}{\operatorname{span}}
\newcommand{\Vp}{V^{\perp}}
\author[C. Hohlweg]{Christophe~Hohlweg}
\address[Christophe Hohlweg]{Universit\'e du Qu\'ebec \`a Montr\'eal\\
LaCIM et D\'epartement de Math\'ematiques\\ CP 8888 Succ. Centre-Ville\\
Montr\'eal, Qu\'ebec, H3C 3P8\\ Canada}
\email{hohlweg.christophe@uqam.ca}
\urladdr{http://hohlweg.math.uqam.ca}
\author[J.~P. Labb\'e]{Jean-Philippe~Labb\'e}
\address[Jean-Philippe Labb\'e]{Freie Universit\"at Berlin\\ Institut f\"ur Mathematik\\
   Arnimallee~2 \\14195, Berlin, Deutschland}
\email{labbe@math.fu-berlin.de}
\urladdr{http://page.mi.fu-berlin.de/labbe}
\author[V. Ripoll]{Vivien~Ripoll}
\address[Vivien Ripoll]{Universit\'e du Qu\'ebec \`a Montr\'eal\\
LaCIM et D\'epartement de Math\'ematiques\\ CP 8888 Succ. Centre-Ville\\
Montr\'eal, Qu\'ebec, H3C 3P8\\ Canada}
\email{vivien.ripoll@lacim.ca}
\urladdr{http://www.math.ens.fr/~vripoll}
\thanks{The first author is supported by a NSERC grant, the second
  author is supported by a FQRNT doctoral scholarship and the third
  author is supported by a postdoctoral fellowship from CRM-ISM and
  LaCIM}
\subjclass[2010]{Primary 17B22, Secondary 20F55}
\keywords{Coxeter group, root system, roots, limit point, accumulation set.}
\begin{document}

\title[Asymptotical behaviour of roots of infinite Coxeter groups]{Asymptotical behaviour of roots\\ of infinite Coxeter groups}

%%%%%%%%%%%%%%%%%%%%%%%%%%%%%%%%%%%%%%%%%%%%%%%%%%%%%%%%%%%%%%
%
%   ABSTRACT
%
%%%%%%%%%%%%%%%%%%%%%%%%%%%%%%%%%%%%%%%%%%%%%%%%%%%%%%%%%%%%%%

\begin{abstract}
Let $W$ be an infinite Coxeter group. We initiate the study of the set
$E$ of limit points of ``normalized'' roots (representing the
directions of the roots) of W. We show that $E$ is contained in the
isotropic cone $Q$ of the bilinear form $B$ associated to a geometric
representation, and illustrate this property with numerous examples
and pictures in rank $3$ and $4$. We also define a natural geometric
action of $W$ on $E$, and then we exhibit a countable subset of $E$,
formed by limit points for the dihedral reflection subgroups of
$W$. We explain how this subset is built from the intersection
with $Q$ of the lines passing through two positive roots, and finally we
establish that it is dense in~$E$.

\end{abstract}

\maketitle

%%%%%%%%%%%%%%%%%%%%%%%%%%%%%%%%%%%%%%%%%%%%%%%%%%%%%%%%%%%%%%
%
%   INTRODUCTION
%
%%%%%%%%%%%%%%%%%%%%%%%%%%%%%%%%%%%%%%%%%%%%%%%%%%%%%%%%%%%%%%

\section*{Introduction} \label{sec:intro}

When dealing with Coxeter groups, one of the most powerful tools we have at our disposal is the notion of root systems. In the
case of a finite Coxeter group $W$, i.e., a finite reflection group, roots are representatives of normal vectors for the Euclidean
reflections in $W$. Thinking about finite Coxeter groups and their associated \emph{finite} root systems allows the use of
arguments from Euclidean geometry and finite group theory, which makes finite root systems well studied, see for instance
\cite[Ch.1]{humphreys}, and the references therein.

To deal with root systems of infinite Coxeter groups, we usually distinguish two classes: affine reflection groups, and the other
infinite but not affine Coxeter groups. The  root systems associated to affine Coxeter groups are also well-studied: an affine root
system can be realized in an affine Euclidean space as a finite root system up to translations, see for instance
\cite[Ch.4]{humphreys}. For the other \emph{infinite} (not affine) Coxeter groups, in comparison, very little is known.

While investigating a conjecture on biclosed sets of positive roots in
an infinite root system (Conjecture 2.5 in \cite{dyer:weak}), we came
across a difficulty: we do not know much how the roots of an infinite
root system are geometrically distributed over the space.

First, observe that even the term \emph{(infinite) root system} seems to designate different objects, depending on whether
associated to Lie algebras (see \cite{kac,loos-neher}), Kac-Moody Lie algebras (see \cite{moody-pianzola}) or Coxeter groups
via their geometric representations (see \cite[Ch.5 \& 6]{humphreys}). While all definitions of root systems are related to a given
bilinear form, the bilinear forms considered in the case of Kac-Moody algebras or Lie algebras are different from the one in the
classical definition of a root system for infinite Coxeter groups. In particular, a difference lies in the ability to change the value of
the bilinear form on a pair of reflections whose product has infinite order. In this vein, more general geometric representations of
a Coxeter group and of root systems (that we take up in \S\ref{sec:rep}) have been introduced. These more general geometric
representations were recently presented in \cite{krammer:conjugacy} and \cite{bonnafe-dyer} (see also Howlett
\cite{howlett:coxeter}) but seem to go back to E.~B.~Vinberg~\cite{vinberg}, as stated in D.~Krammer's thesis. They have been the
framework of several recent studies about infinite root systems of Coxeter groups (see for instance
\cite{bonnafe-dyer,dyer:rigidity,dyer:weak,fu1,fu2}).

\medskip

Taking up this framework and using the computer algebra system Sage, we obtain the following pictures (Figures~\ref{fig:sage}(a)
and \ref{fig:sage}(b)), which suggests that roots have a very interesting asymptotical behaviour. In this article, we initiate the study
of this behaviour.

\begin{figure}[!ht]
\centering
\captionsetup{width=0.9\textwidth}
%%%%%%%%%%%%%%%%%%%%%%%%%%%%%%%%%%%%%%%%%
%\input{Img_sage.tex}
%%
\begin{tabular}{p{.45\linewidth}@{\hspace{1cm}}p{.45\linewidth}}

\scalebox{0.55}{
\begin{tikzpicture}
	[scale=2,
	 q/.style={red,line join=round,thick},
	 racine/.style={blue},
	 racinesimple/.style={black},
	 racinedih/.style={blue},
	 sommet/.style={inner sep=2pt,circle,draw=black,fill=blue,thick,anchor=base},
	 rotate=0]

\def\grosseur{0.0125}
\def\grosseursimple{0.025}

\def\grosseurdih{0.0075}

%% La courbe Q

\draw[q] (2.57,0.99) --
(2.57,1.0) --
(2.57,1.01) --
(2.57,1.02) --
(2.56,1.08) --
(2.56,1.09) --
(2.56,1.1) --
(2.56,1.11) --
(2.55,1.14) --
(2.55,1.15) --
(2.54,1.18) --
(2.54,1.19) --
(2.53,1.22) --
(2.52,1.24) --
(2.52,1.25) --
(2.51,1.27) --
(2.5,1.29) --
(2.49,1.31) --
(2.48,1.33) --
(2.47,1.35) --
(2.45,1.38) --
(2.44,1.39) --
(2.43,1.41) --
(2.42,1.42) --
(2.41,1.43) --
(2.34,1.5) --
(2.33,1.51) --
(2.32,1.52) --
(2.31,1.53) --
(2.29,1.54) --
(2.28,1.55) --
(2.26,1.56) --
(2.24,1.57) --
(2.22,1.58) --
(2.2,1.59) --
(2.18,1.6) --
(2.17,1.6) --
(2.15,1.61) --
(2.14,1.61) --
(2.11,1.62) --
(2.1,1.62) --
(2.06,1.63) --
(2.05,1.63) --
(2.04,1.63) --
(2.03,1.63) --
(2.02,1.63) --
(2.01,1.63) --
(1.99,1.63) --
(1.98,1.63) --
(1.97,1.63) --
(1.96,1.63) --
(1.95,1.63) --
(1.94,1.63) --
(1.9,1.62) --
(1.89,1.62) --
(1.86,1.61) --
(1.85,1.61) --
(1.83,1.6) --
(1.82,1.6) --
(1.8,1.59) --
(1.78,1.58) --
(1.76,1.57) --
(1.74,1.56) --
(1.72,1.55) --
(1.71,1.54) --
(1.69,1.53) --
(1.68,1.52) --
(1.67,1.51) --
(1.66,1.5) --
(1.59,1.43) --
(1.58,1.42) --
(1.57,1.41) --
(1.56,1.39) --
(1.55,1.38) --
(1.53,1.35) --
(1.52,1.33) --
(1.51,1.31) --
(1.5,1.29) --
(1.49,1.27) --
(1.48,1.25) --
(1.48,1.24) --
(1.47,1.22) --
(1.46,1.19) --
(1.46,1.18) --
(1.45,1.15) --
(1.45,1.14) --
(1.44,1.11) --
(1.44,1.1) --
(1.44,1.09) --
(1.44,1.08) --
(1.43,1.02) --
(1.43,1.01) --
(1.43,1.0) --
(1.43,0.99) --
(1.43,0.98) --
(1.43,0.97) --
(1.43,0.96) --
(1.43,0.95) --
(1.44,0.88) --
(1.44,0.87) --
(1.44,0.86) --
(1.45,0.83) --
(1.45,0.82) --
(1.45,0.81) --
(1.46,0.79) --
(1.46,0.78) --
(1.47,0.75) --
(1.48,0.72) --
(1.49,0.7) --
(1.5,0.68) --
(1.51,0.66) --
(1.52,0.64) --
(1.53,0.62) --
(1.54,0.6) --
(1.55,0.59) --
(1.57,0.56) --
(1.58,0.55) --
(1.59,0.54) --
(1.61,0.51) --
(1.62,0.5) --
(1.63,0.49) --
(1.67,0.46) --
(1.68,0.45) --
(1.69,0.44) --
(1.71,0.43) --
(1.72,0.42) --
(1.74,0.41) --
(1.76,0.4) --
(1.77,0.39) --
(1.78,0.39) --
(1.8,0.38) --
(1.82,0.37) --
(1.85,0.36) --
(1.86,0.36) --
(1.88,0.35) --
(1.89,0.35) --
(1.9,0.35) --
(1.93,0.34) --
(1.94,0.34) --
(1.95,0.34) --
(1.96,0.34) --
(1.97,0.34) --
(1.98,0.34) --
(1.99,0.34) --
(2.01,0.34) --
(2.02,0.34) --
(2.03,0.34) --
(2.04,0.34) --
(2.05,0.34) --
(2.06,0.34) --
(2.07,0.34) --
(2.1,0.35) --
(2.11,0.35) --
(2.12,0.35) --
(2.14,0.36) --
(2.15,0.36) --
(2.18,0.37) --
(2.2,0.38) --
(2.22,0.39) --
(2.23,0.39) --
(2.24,0.4) --
(2.26,0.41) --
(2.28,0.42) --
(2.29,0.43) --
(2.31,0.44) --
(2.32,0.45) --
(2.33,0.46) --
(2.37,0.49) --
(2.38,0.5) --
(2.39,0.51) --
(2.41,0.54) --
(2.42,0.55) --
(2.43,0.56) --
(2.45,0.59) --
(2.46,0.6) --
(2.47,0.62) --
(2.48,0.64) --
(2.49,0.66) --
(2.5,0.68) --
(2.51,0.7) --
(2.52,0.72) --
(2.53,0.75) --
(2.54,0.78) --
(2.54,0.79) --
(2.55,0.81) --
(2.55,0.82) --
(2.55,0.83) --
(2.56,0.86) --
(2.56,0.87) --
(2.56,0.88) --
(2.57,0.95) --
(2.57,0.96) --
(2.57,0.97) --
(2.57,0.98) --
cycle;

% Roots of deepness= 1
\node[label=left :{{\huge $\alpha$}}] (a) at (0.000000000000000,0.000000000000000) {};
\fill[racinesimple] (0.000000000000000,0.000000000000000) circle (\grosseursimple);\node[label=right :{{\huge$\beta$}}] (b) at (4.00000000000000,0.000000000000000) {};
\fill[racinesimple] (4.00000000000000,0.000000000000000) circle (\grosseursimple);\node[label=above :{{\huge$\gamma$}}] (g) at (2.00000000000000,3.46410161513775) {};
\fill[racinesimple] (2.00000000000000,3.46410161513775) circle (\grosseursimple);
\draw[green!75!black] (a) -- (b) -- (g) -- (a);

% Roots of deepness= 2
\fill[racine] (1.52786404500042,0.01) circle (\grosseursimple);
\fill[racine] (3.00000000000000,1.73205080756888) circle (\grosseursimple);
\fill[racine] (2.47213595499958,0.01) circle (\grosseursimple);
\fill[racine] (1.00000000000000,1.73205080756888) circle (\grosseursimple);

% Roots of deepness= 3
\fill[racine] (1.29925418795460,0.750124755161338) circle (\grosseur);
\fill[racine] (2.70074581204540,0.750124755161338) circle (\grosseur);
\fill[racine] (2.23606797749979,1.73205080756888) circle (\grosseur);
\fill[racine] (1.76393202250021,1.73205080756888) circle (\grosseur);
\fill[racine] (2.00000000000000,0.01) circle (\grosseursimple);

% milieu \rho chapeau de \alpha \beta pour illustrer un exemple :
\node[label=below :{{\huge$\h{\rho}$}}] (a) at (2.00000000000000,0.01) {};

% Roots of deepness= 4
\fill[racine] (2.00000000000000,1.73205080756888) circle (\grosseur);
\fill[racine] (1.48107236977278,1.45430354538087) circle (\grosseur);
\fill[racine] (1.58797734083340,0.441056358833072) circle (\grosseur);
\fill[racine] (1.38196601125011,1.07046626931927) circle (\grosseur);
\fill[racine] (2.51892763022721,1.45430354538087) circle (\grosseur);
\fill[racine] (2.61803398874989,1.07046626931927) circle (\grosseur);
\fill[racine] (2.41202265916660,0.441056358833072) circle (\grosseur);

% Roots of deepness= 5
\fill[racine] (2.58359213500126,0.817763462139324) circle (\grosseur);
\fill[racine] (1.41640786499874,0.817763462139324) circle (\grosseur);
\fill[racine] (2.49197495565797,0.616603890626973) circle (\grosseur);
\fill[racine] (1.80901699437495,0.330792269124804) circle (\grosseur);
\fill[racine] (2.25464400750007,1.59575689721232) circle (\grosseur);
\fill[racine] (1.74535599249993,1.59575689721232) circle (\grosseur);
\fill[racine] (1.42398568894741,1.23320778125395) circle (\grosseur);
\fill[racine] (2.57601431105259,1.23320778125395) circle (\grosseur);
\fill[racine] (1.50802504434203,0.616603890626973) circle (\grosseur);
\fill[racine] (2.19098300562505,0.330792269124804) circle (\grosseur);

% Roots of deepness= 6
\fill[racine] (1.69984026822590,0.420601387045539) circle (\grosseur);
\fill[racine] (1.59674775249769,0.505405614359890) circle (\grosseur);
\fill[racine] (1.46864077564517,0.703078094178913) circle (\grosseur);
\fill[racine] (2.30015973177410,0.420601387045539) circle (\grosseur);
\fill[racine] (2.33558878217749,1.52175011404611) circle (\grosseur);
\fill[racine] (2.00000000000000,0.301957859349469) circle (\grosseur);
\fill[racine] (2.11145618000168,1.63552692427865) circle (\grosseur);
\fill[racine] (1.66441121782251,1.52175011404611) circle (\grosseur);
\fill[racine] (2.58017872829546,1.00489903487844) circle (\grosseur);
\fill[racine] (1.41982127170454,1.00489903487844) circle (\grosseur);
\fill[racine] (2.53135922435483,0.703078094178913) circle (\grosseur);
\fill[racine] (2.45176800710533,1.41552891889277) circle (\grosseur);
\fill[racine] (1.54823199289467,1.41552891889277) circle (\grosseur);
\fill[racine] (1.88854381999832,1.63552692427865) circle (\grosseur);
\fill[racine] (2.40325224750231,0.505405614359890) circle (\grosseur);

% Roots of deepness= 7
\fill[racine] (2.38196601125011,1.47934800038893) circle (\grosseur);
\fill[racine] (2.00000000000000,1.64520628321698) circle (\grosseur);
\fill[racine] (2.34698535516948,0.459120291958359) circle (\grosseur);
\fill[racine] (1.74188649750014,1.57030091762589) circle (\grosseur);
\fill[racine] (1.61803398874989,1.47934800038893) circle (\grosseur);
\fill[racine] (1.42784099282795,0.930308113421365) circle (\grosseur);
\fill[racine] (1.43428069454515,1.11098904875203) circle (\grosseur);
\fill[racine] (2.21654236465910,0.375062377580669) circle (\grosseur);
\fill[racine] (2.12250219613650,0.343314496628835) circle (\grosseur);
\fill[racine] (2.56571930545485,1.11098904875203) circle (\grosseur);
\fill[racine] (2.56143409826268,0.830556410615058) circle (\grosseur);
\fill[racine] (2.46352549156242,0.598408836454621) circle (\grosseur);
\fill[racine] (1.48644565634144,1.28729969973672) circle (\grosseur);
\fill[racine] (2.25811350249986,1.57030091762589) circle (\grosseur);
\fill[racine] (2.57215900717204,0.930308113421365) circle (\grosseur);
\fill[racine] (2.18349992631140,1.60348538829502) circle (\grosseur);
\fill[racine] (2.51355434365856,1.28729969973672) circle (\grosseur);
\fill[racine] (1.87749780386350,0.343314496628835) circle (\grosseur);
\fill[racine] (1.43856590173732,0.830556410615058) circle (\grosseur);
\fill[racine] (1.65301464483052,0.459120291958359) circle (\grosseur);
\fill[racine] (1.81650007368860,1.60348538829502) circle (\grosseur);
\fill[racine] (1.53647450843758,0.598408836454621) circle (\grosseur);
\fill[racine] (1.78345763534090,0.375062377580669) circle (\grosseur);

% Roots of deepness= 8
\fill[racine] (2.04508497187474,0.330792269124804) circle (\grosseur);
\fill[racine] (1.43222828170721,0.889502215664309) circle (\grosseur);
\fill[racine] (2.57294901687516,0.992376807374411) circle (\grosseur);
\fill[racine] (2.30015973177410,0.420601387045538) circle (\grosseur);
\fill[racine] (1.42705098312484,0.992376807374411) circle (\grosseur);
\fill[racine] (2.56777171829279,0.889502215664309) circle (\grosseur);
\fill[racine] (2.43837481404927,1.40779635527655) circle (\grosseur);
\fill[racine] (1.71791931422417,0.417295498450807) circle (\grosseur);
\fill[racine] (2.54508497187474,1.19681767290924) circle (\grosseur);
\fill[racine] (1.56162518595073,1.40779635527655) circle (\grosseur);
\fill[racine] (1.52093707155071,1.34258203788318) circle (\grosseur);
\fill[racine] (2.16744427915491,0.358486905651613) circle (\grosseur);
\fill[racine] (2.07194665446632,1.63123494368546) circle (\grosseur);
\fill[racine] (1.92805334553368,1.63123494368546) circle (\grosseur);
\fill[racine] (2.40149162409079,0.518323457915138) circle (\grosseur);
\fill[racine] (2.12926463455446,1.62010435023380) circle (\grosseur);
\fill[racine] (2.04508497187474,0.330792269124804) circle (\grosseur);
\fill[racine] (1.83255572084509,0.358486905651613) circle (\grosseur);
\fill[racine] (1.43059868916688,1.05058125578611) circle (\grosseur);
\fill[racine] (2.47906292844929,1.34258203788318) circle (\grosseur);
\fill[racine] (2.31282031680318,1.53012616134654) circle (\grosseur);
\fill[racine] (2.21654236465910,1.58878972723441) circle (\grosseur);
\fill[racine] (1.46467783454561,0.750124755161338) circle (\grosseur);
\fill[racine] (1.56162518595073,1.40779635527655) circle (\grosseur);
\fill[racine] (1.41982127170454,1.00489903487844) circle (\grosseur);
\fill[racine] (2.53532216545439,0.750124755161338) circle (\grosseur);
\fill[racine] (1.87073536544554,1.62010435023380) circle (\grosseur);
\fill[racine] (1.68717968319682,1.53012616134654) circle (\grosseur);
\fill[racine] (2.49312970584268,0.652493977474185) circle (\grosseur);
\fill[racine] (1.95491502812526,0.330792269124804) circle (\grosseur);
\fill[racine] (1.45491502812526,1.19681767290924) circle (\grosseur);
\fill[racine] (1.59850837590921,0.518323457915138) circle (\grosseur);
\fill[racine] (1.78345763534090,1.58878972723441) circle (\grosseur);
\fill[racine] (1.56230589874905,0.565060654959379) circle (\grosseur);
\fill[racine] (2.28208068577583,0.417295498450807) circle (\grosseur);
\fill[racine] (1.95491502812526,0.330792269124804) circle (\grosseur);
\fill[racine] (2.43837481404927,1.40779635527655) circle (\grosseur);
\fill[racine] (1.69984026822590,0.420601387045538) circle (\grosseur);
\fill[racine] (1.50687029415732,0.652493977474185) circle (\grosseur);
\fill[racine] (2.56940131083312,1.05058125578611) circle (\grosseur);
\fill[racine] (2.58017872829546,1.00489903487844) circle (\grosseur);
\fill[racine] (2.43769410125095,0.565060654959379) circle (\grosseur);

\coordinate (ancre) at (-0.5,2.6);
\node[sommet,label=below left:{\huge $s_\alpha$}] (alpha) at (ancre) {};
\node[sommet,label=below right :{\huge $s_\beta$}] (beta) at ($(ancre)+(0.5,0)$) {} edge[thick] node[auto] {{\Large 5}} (alpha);
\node[sommet,label=above:{\huge $s_\gamma$}] (gamma) at
($(ancre)+(0.25,0.43)$) {} edge[thick] (alpha) edge[thick] (beta);

% \coordinate (mat) at (4, 2.8);
% \node at (mat) {$\left(\begin{array}{rrr}
% 1 & -\frac{1}{4} \, \sqrt{5} - \frac{1}{4} & -\frac{1}{2} \\
% -\frac{1}{4} \, \sqrt{5} - \frac{1}{4} & 1 & -\frac{1}{2} \\
% -\frac{1}{2} & -\frac{1}{2} & 1
% \end{array}\right)$};
\end{tikzpicture}
}

&

\scalebox{0.66}{
\begin{tikzpicture}
	[scale=2,
	 q/.style={red,line join=round,thick},
	 racine/.style={blue},
	 racinesimple/.style={black},
	 racinedih/.style={blue},
	 sommet/.style={inner sep=2pt,circle,draw=black,fill=blue,thick,anchor=base},
	 rotate=0]

%% La courbe Q

\node[anchor=south west,inner sep=0pt] at (0,0) {\includegraphics[width=8cm]{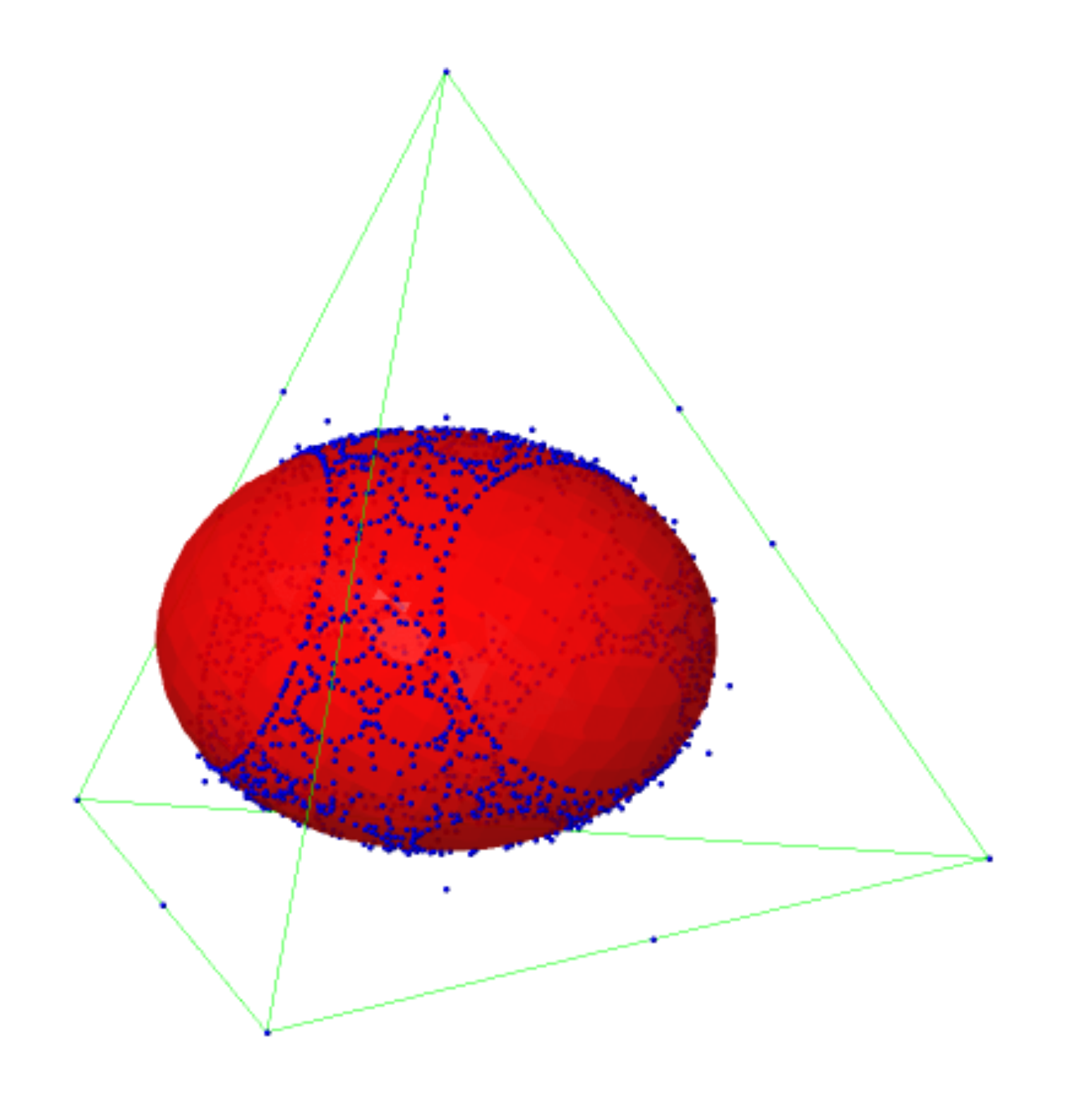}};

\coordinate (ancre) at (0,3.5);
\node[sommet,label=left:{\LARGE $s_\alpha$}] (alpha) at (ancre) {};
\node[sommet,label=right :{\LARGE $s_\beta$}] (beta) at ($(ancre)+(0.5,0)$) {};
\node[sommet,label=above:{\LARGE $s_\delta$}] (delta) at ($(ancre)+(0.22,0.43)$) {} edge[thick] node[auto,swap] {{\Large 4}} (alpha) edge[thick] node[auto] {{\Large 4}} (beta);
\node[sommet,label=below:{\LARGE $s_\gamma$}] (gamma) at ($(ancre)+(0.125,-0.215)$) {} edge[thick] node[auto,swap] {} (alpha) edge[thick] node[auto] {} (beta) edge[thick] node[midway, right] {{\Large 4}} (delta);

\end{tikzpicture}
}

\\
{\small (a) The first 100 normalized  roots, around the isotropic cone $Q$, for the rank~$3$ Coxeter group with the depicted graph.} & (b) {\small The first 1665 normalized  roots, around the isotropic cone $Q$, for the rank~$4$ Coxeter group with the depicted graph.}

\end{tabular}
%%%%%%%%%%%%%%%%%%
\caption{Root systems for two infinite Coxeter groups computed via the computer algebra system Sage.}
\label{fig:sage}
\end{figure}

Let us explain what we see in these pictures. First, we fix a geometric action of $W$ on a finite dimensional real vector space~$V$,
which implies the data of a symmetric bilinear form~$B$, and a simple system $\Delta$, which is a basis for $V$ (the framework
we use is introduced in detail in \S\ref{sec:rep}). In~\S\ref{sec:limit}, we first show that the norm of an (injective) sequence of roots
diverges to infinity. So, in order to visualize ``limits'' of roots (actually the limits of their directions), we cut the picture by an affine
hyperplane. Define $V_1$ to be the affine hyperplane spanned by the points corresponding to the simple roots:
Figures~\ref{fig:sage}(a) and (b) live in~$V_1$ and the triangle (resp. tetrahedron) is the convex hull of the simple roots. The blue
dots are the intersection of $V_1$ with the rays spanned by the roots, and we call them \emph{normalized roots}. The red part
depicts the isotropic cone $Q=\{v\in V\,|\, B(v,v)=0\}$ of the quadratic form associated to $B$. We see on the pictures that the
normalized roots tend to converge to points on $Q$, and that the set of limit points has an interesting structure: it seems to be equal to $Q$ in Figure~\ref{fig:sage}(a), whereas in Figure~\ref{fig:sage}(b) it is similar to an Apollonian gasket.

Let $E$ be the set of accumulation points of the normalized roots, and
call \emph{limit roots} the points of $E$. After having explained our
framework in \S\ref{sec:rep}, we state our first result (Theorem
\ref{thm:inclusion} in \S\ref{subsec:limit}): the set $E$ is indeed
always contained in the isotropic cone $Q$.  M.~Dyer discovered
independently this property in his research on the imaginary cone of
Coxeter groups, see \cite{dyer:imaginary} and
Remark~\ref{rk:dyer}. However, we state this result and its proof in
an affine context, which is slightly different from M.~Dyer's
framework, and allows us to describe many examples and pictures in
rank $2$, $3$ and $4$. Through them, we see new geometric properties
emerging; in \S\ref{sec:action} and \S\ref{sec:dense} we describe two
of these properties of $E$ which we feel should motivate further works
on the subject:

\begin{enumerate}[(i)]
\item The geometric action of $W$ on $V$ induces an action on
  $E$. This action is simply given by the following process: for
  $\alpha\in\Delta$ and $x\in E$, the image~$s_\alpha\cdot x$ of $x$
  is the intersection point (other than $x$, if possible) of $Q$ with
  the line passing through the (normalized) root $\alpha$ and the
  point $x$ (see
  Prop.~\ref{prop:Estable}~and~\ref{prop:actiongeo}). Contrary to the
  usual action of $W$ on the roots, the action of $W$ on $E$ stays in
  the positive cone on $\Delta$; indeed $E$ lies in the convex hull
  $\conv(\Delta)$ of the simple roots.  Using this action, we give in
  \S\ref{subsec:fractal} some ways to understand the fractal
  phenomenon.

\item The set $E$ is the closure of the set of accumulation points obtained from the dihedral reflection subgroups of $W$ only.
Equivalently, $E$ is the closure of the set of all points you obtain by intersecting $Q$ with the lines in $V_1$ passing through two
normalized roots (see Theorem~\ref{thm:density}).
\end{enumerate}

A classical question when dealing with a property of Coxeter groups is how it passes down to subgroups (parabolic, or more
generally reflection subgroups). In the last section (\S\ref{sec:subgps}), we define the set of limit roots for a reflection subgroup,
and discuss how it compares to the set of limit roots for the whole group.

Along the text, we also present possible future directions and open problems.

\medskip

In a forthcoming paper (\cite{dhr}), the first and third authors, together with M.~Dyer, show that $E$ is the closure of the orbit of a finite set of accumulation points, and make some connections with the notions of root posets and of dominance order via the
imaginary cone for Coxeter groups (cf. \cite{dyer:imaginary,fu2}).

\medskip

\noindent\textbf{Figures.} The pictures were realized using the \TeX-package TikZ, and computed by dint of the computer
algebra system \emph{Sage} \cite{sage}.

%%%%%%%%%%%%%%%%%%%%%%%%%%%%%%%%%%%%%%%%%%%%%%%%%%%%%%%%%%%%%%
%
%   SECTION : GEOMETRIC REPRESENTATIONS OF A COXETER GROUP
%
%%%%%%%%%%%%%%%%%%%%%%%%%%%%%%%%%%%%%%%%%%%%%%%%%%%%%%%%%%%%%%

\section{Geometric representations of a Coxeter group}
\label{sec:rep}

In this section, we recall some properties of Coxeter groups, the construction of their associated root systems, and we fix
notations. The theory of Coxeter groups is a rich one, and we recall here only what is necessary for the purpose of this paper.
For more details, see for instance \cite{bourbaki:cox,humphreys,kane,bjorner-brenti}, and the references therein.

We consider a Coxeter system $(W,S)$. Recall that $S\subseteq W$ is a set of generators for $W$, subject only to relations of
the form $(st)^{m_{s,t}}=1$, where $m_{s,t}\in \mathbb{N}^* \cup \{\oo\}$ is attached to each pair of generators $s,t\in S$, with
$m_{s,s}=1$ and $m_{s,t}\geq 2$ for $s\neq t$. We write $m_{s,t}=\oo$ if the product $st$ has infinite order. In the following,
we always suppose that $S$ is finite, and denote by $n=|S|$ the rank of $W$.

%   Subsection : The classical geometric representation of a Coxeter group
%
%%%%%%%%%%%%%%%%%%%%%%%%%%%%%%%%%%%%%%%%%%%%%%%%%%%%%%%%%%%%%%

\subsection{The classical geometric representation of a Coxeter group} \label{subsec:cox}
~

Coxeter groups are modeled to be the abstract combinatorial counterpart of reflection groups, i.e., groups generated by
reflections.  It is well known that any finite Coxeter group can be represented geometrically as a (finite) reflection group. This
property still holds for infinite Coxeter groups, for some adapted definition of reflection that we first recall below.

\medskip

For $B$ a symmetric bilinear form on a real vector space $V$ (of finite dimension), and $\alpha \in V$ such that
$B(\alpha,\alpha)\neq 0$, we denote by $s_\alpha$ the following map:

\begin{equation}\label{eq:reflection}
s_{\alpha} (v) = v - 2\frac{B(\alpha,v)}{B(\alpha,\alpha)} \ \alpha , \quad \text{for any   } v\in V.
\end{equation}

We denote by $H_\alpha:=\{v\in V\,|\, B(\alpha,v)=0\}$ the orthogonal of the line $\mathbb{R}\alpha$ for the form $B$. Since
${B(\alpha,\alpha)\neq 0}$, we have $H_\alpha\oplus \mathbb{R} \alpha = V$. It is straightforward to check that~$s_\alpha$ fixes
$H_\alpha$, that~${s_\alpha(\alpha)=-\alpha}$, and~$s_{\alpha}$ also preserves the form~$B$, so it lies in the associated
orthogonal group~$\OO_B(V)$. We call~$s_\alpha$ the \emph{$B$-reflection associated to~$\alpha$} (or simply reflection
whenever~$B$ is clear). When $B$ is a scalar product, this is of course the usual formula for a Euclidean reflection.

\medskip

Let us now recall this \emph{classical geometric representation} (following \cite[\S5.3-5.4]{humphreys}). Consider a real vector
space $V$ of dimension $n$, with basis $\Delta=\{ \alpha_s ~|~ s\in S\}$. We define a symmetric bilinear form $B$ by:
\[
B(\alpha_s,\alpha_t)= \left\{
\begin{array}{ll}
-\cos \left( \frac{\pi}{m_{s,t}} \right) & \text{if }m_{s,t} < \oo, \\
-1 & \text{if } m_{s,t}=\oo.
\end{array}
\right.
\]

Then any element $s$ of $S$ acts on $V$ as the $B$-reflection
associated to $\alpha_s$ (as defined in
Equation~\eqref{eq:reflection}), i.e., $s (v) = v -
2B(\alpha_s,v)\ \alpha_s$ for $v \in V$. This induces
a \emph{faithful} action of $W$ on $V$, which preserves the form $B$;
thus we denote by the same letter an element of $W$ and its associated
element of $\OO_B(V)$.

%   Subsection : Root system and reflection subgroups of a Coxeter group
%
%%%%%%%%%%%%%%%%%%%%%%%%%%%%%%%%%%%%%%%%%%%%%%%%%%%%%%%%%%%%%%

\subsection{Root system and reflection subgroups of a Coxeter group} \label{subsec:reflsg}
~

The root system of $W$ is a way to encode the reflections of the
Coxeter group, i.e., the conjugates of elements of $S$, which are called simple reflections.
 The elements of~${\Delta=\{\alpha_s ~|~ s\in S\}}$ are
called \emph{simple roots} of~$W$, and the \emph{root system~$\Phi$}
of~$W$ is defined to be the orbit of $\Delta$ under the action of
$W$. By construction, any root~$\rho \in \Phi$ gives rise to the
reflection $s_{\rho}$ of $W$, which is conjugate to some~$s_{\alpha}
\in S$.

\medskip

A {\em reflection subgroup} of $W$ is a subgroup of $W$ generated by reflections; so it can be built from a subset of $\Phi$.
It turns out that any such reflection subgroup is again a Coxeter group, with some canonical generators
(\cite{deodhar:subgps,dyer:subgps}). So it is  natural to desire to apply results valid for $W$ to a reflection subgroup simply by
restriction. A major drawback of the classical geometric representation described above is that it is not ``functorial'' with
respect to the reflection subgroups: it can happen that the representation of some reflection subgroups~$W'$ of~$W$, induced
(by restriction) by the geometric representation of~$W$, is not the same as the geometric representation of~$W'$ as a Coxeter
group, see Example~\ref{ex:dihcex} below. Note that it does also happen that a Coxeter group of rank $n$ contains a reflection
subgroup of higher rank, as shown in Example~\ref{ex:HigherRank}.

\begin{ex}[Reflection subgroups of rank $2$] \label{ex:dihcex}
Let us consider the Coxeter group of rank $3$ with ${S=\{s_\alpha,s_\beta,s_\gamma\}}$ and $m_{s_\alpha,s_\beta}=5$,
$m_{s_\beta,s_\gamma}=m_{s_\alpha,s_\gamma}=3$ (whose Coxeter diagram is on Figure~\ref{fig:sage}(a)). Take the root
${\rho=s_\alpha s_\beta (\alpha)=s_\beta s_\alpha(\beta)}$, so that~$s_\rho$ corresponds to the longest element in the subgroup
$\left<s_\alpha,s_\beta\right>$: $s_\rho = s_\alpha s_\beta s_\alpha s_\beta s_\alpha=s_\beta s_\alpha s_\beta s_\alpha s_\beta$. We
compute $\rho= \frac{1+\sqrt{5}}{2} (\alpha + \beta)$. Consider the reflection subgroup $W'$ generated by $s_\gamma$ and $s_\rho$.
The product $s_\gamma s_\rho$ has infinite order, so $W'$ is an infinite dihedral group, with generators $s_\gamma$ and
$s_\rho$. But, if $B$ denotes the bilinear form associated to the Coxeter group $W$, we get: 
$B(\gamma, \rho) = - \frac{1+\sqrt{5}}{2}\neq -1$. So, the restriction to $W'$ of the geometric representation of $W$ does not
correspond to the classical geometric representation of $W'$ as an infinite dihedral group. In Example \ref{ex:rk3} we give a
geometric interpretation of this fact, which is visible in Figure~\ref{fig:sage}(a).
\end{ex}

%   Subsection : Coxeter groups from based root systems
%
%%%%%%%%%%%%%%%%%%%%%%%%%%%%%%%%%%%%%%%%%%%%%%%%%%%%%%%%%%%%%%

\subsection{Coxeter groups from based root systems} \label{subsec:georep}
~

In order to solve the problem in Example~\ref{ex:dihcex}, we relax the requirements on the bilinear form~$B$ used to represent the group $W$: we allow
the values of some~$B(\alpha,\beta)$ to be any real numbers less than or equal to $-1$ (when the associated product of
reflections $s_{\alpha}s_{\beta}$ has infinite order). Actually, an even more general setting is better adapted here: the notion of
a \emph{based root system} (used for instance in \cite{howlett:coxeter,krammer:conjugacy,bonnafe-dyer}).

\begin{defi} \label{def:root}
Let $V$ be a real vector space, equipped with a bilinear form~$B$. Consider a finite subset $\Delta$ of $V$ such that

\begin{enumerate}[(i)]
\item $\Delta$ is positively independent\footnote{Geometrically, this means we require that $0$ does not lie in the convex
hull of the points of $\Delta$.}: if $\sum_{\alpha \in \Delta} \lambda_{\alpha} \alpha =0$ with all $\lambda_\alpha \geq 0$, then
all~${\lambda_\alpha=0}$;

\item for all $\alpha, \beta \in \Delta$, with $\alpha \neq \beta$, $\displaystyle{B(\alpha,\beta) \in (-\oo,-1] \cup \left\{-\cos\left(\tfrac{\pi}{k}\right), k\in \mathbb{Z}_{\geq 2} \right\} }$;

\item for all $\alpha \in \Delta$, $B(\alpha,\alpha)=1$.
\end{enumerate}

Such a set $\Delta$ is called a {\em simple system}. Denote by $S:=\{s_\alpha ~|~ \alpha \in \Delta\}$ the set of~$B$-reflections
associated to elements in $\Delta$ (see Equation~\eqref{eq:reflection}). Let $W$ be the subgroup of $\OO_B(V)$ generated by
$S$, and $\Phi$ be the orbit of $\Delta$ under the action of~$W$.

The pair $(\Phi,\Delta)$ is a \emph{based   root system in $(V,B)$}; its  {\em rank} is the cardinality of~$\Delta$, i.e., the cardinality
of $S$. We call the pair $(V,B)$  a {\em geometric $W$-module}\footnote{The triplet  $(V,\Delta,B)$ is sometimes called a
{\em Coxeter datum} in the literature, see for instance \cite{fu1,fu2}.}. We equivalently refer to {\em a geometric representation of
$W$} instead of a geometric~$W$-module.
\end{defi}

\begin{remark}\label{rk:positively}
~
\begin{itemize}
\item Condition~(ii) is natural to ensure that subrepresentations are
  again geometric representation in the sense of this new definition
  (we saw in Example~\ref{ex:dihcex} that this does not work for the
  usual definition).
\item In Condition~(i), the relaxation is more subtle, but also
  necessary if we want a nice functorial behaviour on the
  subrepresentations. For instance, for some Coxeter group $W$ there
  exists a reflection subgroup of rank (as a Coxeter group) strictly
  higher than that of $W$, see Example \ref{ex:HigherRank}.

\item Even if $\Delta$ is not anymore required to be a basis, the condition that it is positively independent is \emph{necessary} to
keep the usual properties of root systems, in particular the distinction between a set of \emph{positive} roots and a set of \emph{negative}
roots. Indeed, it is not difficult to prove that a set is positively independent if and only if its is included in an open half-space
supported by a linear hyperplane, see~\cite[ p.4 note (b)]{howlett:coxeter}.
\end{itemize}
\end{remark}

This generalization of root system enjoys the following expected properties (see for instance \cite{krammer:conjugacy,bonnafe-dyer}):
\begin{itemize}
 \item $(W,S)$ is a Coxeter system\footnote{This result is~\cite[Theorem 1.2]{vinberg}.}, where the order of $s_\alpha s_\beta$ is $k$ whenever $B(\alpha,\beta) = - \cos (\frac{\pi}{k})$, and~$\oo$ if $B(\alpha,\beta) \leq -1$.

 \item The convex set $\cone(\Delta)$ consisting of all positive linear combinations of elements of $\Delta$ allows us to define the set
 of {\em positive roots} $\pp:=\Phi \cap \cone(\DD)$, and then $\Phi =\pp \sqcup (-\pp)$ and $\mathbb
 R \rho\cap \Phi=\{\rho,-\rho\}$, for $\rho\in \Phi$.
\end{itemize}

The classical geometric representation (that we recalled in
\S\ref{subsec:cox}) is an example of such a geometric $W$-module. If
all $m_{s,t}$ (called the \emph{labels} of the group) are finite, then
the only possible representation (supposing that $\Delta$ is a basis)
is the classical one.  In particular, when the form $B$ is positive
definite, then~$\Phi$ is a \emph{finite} root system and contains no
more information than its associated finite Coxeter group.
\smallskip

We say that $(\Phi,\DD)$ is an \emph{affine based root system} when the form $B$ is positive semidefinite, but not definite.
Note that traditionally, the Coxeter group itself is said to be affine if the root system of its classical geometric representation
is affine.

\begin{ex}[Irreducible affine root systems] \label{ex:affine}
An infinite dihedral group $W$ has non-affine geometric representations as well as the classical affine representation.
 If $\Phi$ is an infinite root system of rank $2$, with
simple roots $\alpha$, $\beta$, then $B(\alpha,\beta) \leq -1$, and
$\Phi$ is affine if and only if $B(\alpha,\beta)=-1$ (i.e., when
$\Phi$ corresponds to the classical geometric representation of
$W$). We give a geometric description of these two cases in
Figure~\ref{fig:dih1}.  However, note that if $W$ is
irreducible\footnote{See \S\ref{subsec:examplesE} for the definition.}
of rank $\geq 3$, then $\Phi$ is affine if and only if $W$ is affine
(because there is no label $\oo$ in an irreducible affine Coxeter
graph of rank $\geq 3$).
\end{ex}

All the desired properties of the root system and of positive and negative roots still hold for a based root system. In particular,
the following statements, which are essential in the next sections, are still valid in this new framework.

\begin{prop}
  \label{prop:dihedral}
  Let $(\Phi,\Delta)$ be a based root system in $(V,B)$, with associated Coxeter system $(W,S)$.
  \begin{enumerate}[(i)]
  \item The set $\{B(\alpha,\rho)\,|\, \alpha\in\Delta,\ \rho\in \Phi^+\textrm{ and } |B(\alpha,\rho)|<1\}$ is finite.

  \item Denote by $Q$ the isotropic cone:
  \[Q
  := \{v \in V ~|~ q(v)=0 \}, \text{ where } q(v)=B(v,v).\] Let
  $\rho_1 \neq \rho_2$ be two roots in $\pp$. Denote by $W'$ the {\em
    dihedral reflection subgroup} of $W$ generated by the two
  reflections $s_{\rho_1}$ and $s_{\rho_2}$, and
  \[
  \Phi':= \{\rho \in \Phi ~|~ s_{\rho} \in W'\} \ .
  \]
  Then there exists $\Delta'\subseteq \Phi^+\cap\Phi'$ of cardinality $2$ such that   $(\Phi',\Delta')$ is a based root system of
  rank $2$, with associated Coxeter group $W'$. Moreover:
  \begin{enumerate}[(a)]
     \item $\Phi'$ is infinite if and only if the plane $\Span(\rho_1,\rho_2)$ intersects~$Q \setminus \{0\}$, if and only if $|B(\rho_1,\rho_2)|\geq 1$;

     \item $\Phi'$ is affine if and only if $\Span(\rho_1,\rho_2) \cap Q$ is a line, if and only if $B(\rho_1,\rho_2)=\pm1$;

    \item when $\Phi'$ is infinite, $\Delta'=\{\rho_1,\rho_2\}$ if and only if $B(\rho_1,\rho_2)\leq -1$. \label{item:last}
  \end{enumerate}
  \end{enumerate}
\end{prop}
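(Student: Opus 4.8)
The plan is to prove part (ii) first and then deduce (i) from (ii)(a). The preliminary fact used everywhere is that every root has unit $B$-norm: since $\Phi=W\cdot\Delta$, the group $W$ preserves $B$, and $B(\alpha,\alpha)=1$ for $\alpha\in\Delta$, whence $B(\rho,\rho)=1$ for all $\rho\in\Phi$. Set $P:=\Span(\rho_1,\rho_2)$. Because $\rho_1\ne\rho_2$ are positive roots and $\mathbb R\rho\cap\Phi=\{\pm\rho\}$, they are linearly independent, so $s_{\rho_1}\ne s_{\rho_2}$ and $W'$ is a genuine rank-$2$ dihedral group. In the basis $(\rho_1,\rho_2)$ the Gram matrix of $B|_P$ is $\left(\begin{smallmatrix}1&b\\ b&1\end{smallmatrix}\right)$ with $b=B(\rho_1,\rho_2)$, determinant $1-b^2$ and eigenvalues $1\pm b$; hence $B|_P$ is positive definite, degenerate positive semidefinite, or indefinite according as $|b|<1$, $|b|=1$, or $|b|>1$, and correspondingly $P\cap Q$ is $\{0\}$, a line, or a union of two lines. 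Since the signature of $B|_P$ is an invariant of $P$ independent of the chosen basis, this trichotomy is the engine of the proof.

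For the existence of $\Delta'$ I would invoke the Deodhar--Dyer theory of reflection subgroups \cite{deodhar:subgps,dyer:subgps}: the dihedral group $W'$ is itself a Coxeter group carrying a canonical simple system $\{\sigma_1,\sigma_2\}\subseteq\Phi^+\cap\Phi'$, which spans $P$. Taking $\Delta':=\{\sigma_1,\sigma_2\}$, the axioms of Definition~\ref{def:root} are routine: unit norm is automatic, positive independence holds because $\Phi^+$ lies in an open half-space (positive independence of $\Delta$), and $B(\sigma_1,\sigma_2)$ lies in the admissible range since it is tied to the order $m'$ of $s_{\sigma_1}s_{\sigma_2}$ by the correspondence recalled just before the statement. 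That $\Phi'$ is the $W'$-orbit of $\Delta'$ follows because the reflections of $W'$ are exactly $W'\cap\{s_\rho:\rho\in\Phi\}$ (Deodhar--Dyer) and each reflection of a dihedral group is conjugate to one of its two generators. Parts (a) and (b) are then immediate from the signature dictionary: computing $B|_P$ in the basis $(\sigma_1,\sigma_2)$, $W'$ is finite exactly when $B(\sigma_1,\sigma_2)=-\cos(\pi/m')>-1$, i.e. when $B|_P$ is positive definite, and affine exactly when $B(\sigma_1,\sigma_2)=-1$, i.e. when $B|_P$ is degenerate. By basis-independence of the signature this is the same as the condition on $b$, yielding ``infinite $\Leftrightarrow |b|\ge1\Leftrightarrow P$ meets $Q\setminus\{0\}$'' and ``affine $\Leftrightarrow b=\pm1\Leftrightarrow P\cap Q$ is a line''.

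For (c) I read the assertion ``$\Delta'=\{\rho_1,\rho_2\}$'' as ``$\{\rho_1,\rho_2\}$ may itself be chosen as the simple system'', i.e. $(\Phi',\{\rho_1,\rho_2\})$ is a based root system. If it is, axiom (ii) of Definition~\ref{def:root} forces $B(\rho_1,\rho_2)\le0$, and since $s_{\rho_1}s_{\rho_2}$ has infinite order ($W'$ being infinite dihedral) the correspondence gives $B(\rho_1,\rho_2)\le-1$. Conversely, if $b\le-1$ then $\{\rho_1,\rho_2\}$ satisfies conditions (i)--(iii) of Definition~\ref{def:root}, its reflections generate $W'$ by construction, and the same conjugacy remark identifies the $W'$-orbit of $\{\rho_1,\rho_2\}$ with $\Phi'$; hence $(\Phi',\{\rho_1,\rho_2\})$ is a based root system. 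The remaining possibility in the infinite regime, $b\ge1$, is excluded precisely because it violates the sign in axiom (ii), which is what makes the statement a genuine equivalence.

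Finally, (i) reduces, since $\Delta$ is finite, to bounding the values $B(\alpha,\rho)$ with $|B(\alpha,\rho)|<1$ for a fixed $\alpha\in\Delta$. By part (a) such a pair generates a finite dihedral group $W'=\langle s_\alpha,s_\rho\rangle$ of some order $2m$, and on the positive-definite plane $\Span(\alpha,\rho)$ the unit vectors $\alpha,\rho$ subtend an integer multiple of $\pi/m$, so $B(\alpha,\rho)\in\{-\cos(j\pi/m):0\le j\le m\}$. The value set is thus finite as soon as $m$ is \emph{uniformly} bounded, and providing this bound is the one genuinely non-formal step. I would supply it from the theorem that every finite subgroup of a Coxeter group lies in a conjugate of a finite standard parabolic subgroup: then $W'\subseteq wW_Iw^{-1}$ with $W_I$ finite, so $2m=|W'|\le\max\{|W_I|:I\subseteq S,\ W_I\text{ finite}\}$, which is finite because $S$ is. This bound is independent of $\alpha$ and $\rho$ and finishes the proof.
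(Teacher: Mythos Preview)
Your proof is correct and matches the paper's approach, which simply defers to \cite[\S4.5]{bjorner-brenti} for (i) and (ii)(a)--(b) and to \cite[Theorem~4.4]{dyer:subgps} for (ii)(c): the signature analysis of $B|_P$, the Deodhar--Dyer canonical simple system for $\Delta'$, and a uniform bound on the order of finite dihedral subgroups are precisely the standard ingredients those references supply. Your direct verification of (c) is the rank-two instance of Dyer's theorem, and your route to the bound in (i) via the theorem that finite subgroups lie in conjugates of finite standard parabolics is a clean and standard way to obtain it.
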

\begin{proof}  The nontrivial parts of the proofs of these statements in the context of a based root system are word for word the
same as the proofs in the case of the root system of the classical geometric representation, proofs that the reader may find for
instance in \cite[\S4.5]{bjorner-brenti}. The last statement~(ii)(c)
%\eqref{item:last}
is a consequence of \cite[Theorem 4.4]{dyer:subgps}, see also
\cite[Theorem 1.8~(ii)]{fu1}.
\end{proof}

%   Subsection : Other geometric representations
%
%%%%%%%%%%%%%%%%%%%%%%%%%%%%%%%%%%%%%%%%%%%%%%%%%%%%%%%%%%%%%%

\subsection{Other geometric representations} \label{subsec:georep2}
~

Let~$(W,S)$ be a Coxeter group. Fix a matrix $A=(a_{s,t})_{s,t\in S}$ such that

\begin{equation}
\left\{
\begin{array}{ll}
 a_{s,t}= -\cos\left(\frac{\pi}{m_{s,t}}\right) &\textrm{if } m_{s,t}<\oo,\\
   a_{s,t}\leq -1 &\textrm{if }m_{s,t}=\oo.
\end{array}
\right.
\label{equ:matrice}
\end{equation}

We associate to the matrix $A$ a {\em canonical geometric $W$-module} $(V_A,B_A)$ as follows.

\begin{itemize}
\item $V_A$ is a real vector space with basis  $\Delta_A = \{\alpha_s\,|\, s\in S\}$ and $B_A$ is the symmetric bilinear form
defined by $B_A(\alpha_s,\alpha_t)=a_{s,t}$ for $s,t\in S$.

\item Any element $s$ of $S$ acts on $V$ as the $B$-reflection associated to $\alpha_s$, i.e., $s (v) = v - 2B(\alpha_s,v)\ \alpha_s$
for $v \in V_A$.
\end{itemize}

Since $\Delta_A$ satisfies the requirement of Definition~\ref{def:root}, $W$ acts faithfully on $V_A$ as the subgroup of
~$\OO_{B_A}(V_A)$ spanned by the $B$-reflections associated to the $\alpha_s$. Moreover,~$(\Phi_A,\Delta_A)$ is a
based root system of $(V_A,B_A)$, where~$\Phi_A$ is the $W$-orbit of $\Delta_A$. Note that giving a matrix $A$, as we
did, is equivalent to fix the values in Conditions~(ii)~and~(iii) in Definition~\ref{def:root}.

\begin{ex}[Continuation of Example~\ref{ex:dihcex}]
In the case of Example~\ref{ex:dihcex}, the restriction of the classical geometric representation of $W$ to the reflection subgroup~$W'$
generated by $s_\gamma$ and~$s_\rho$ gives the geometric representation that is associated to the canonical geometric
$W'$-module given by the matrix

\[
A=\left(
\begin{array}{cc}
1&-\frac{1+\sqrt 5}{2}\\
-\frac{1+\sqrt 5}{2}&1
\end{array}
\right).
\]
\end{ex}
\begin{remark}
\label{rk:Deltabasis}
By construction, in the based root system $(\Phi_A,\Delta_A)$
associated to the canonical geometric $W$-module $(V_A,B_A)$ defined
above, the set $\Delta_A$ is a basis. This setting will actually be
the one used throughout the three next subsections:
\begin{center}\emph{In
  \S\S\ref{sec:limit}-\ref{sec:action}-\ref{sec:dense}, we assume that
  the set of simple roots is a basis for the vector
  space.}\end{center} Note that analogous results remain true in all
generality, but this assumption highly simplifies the constructions
and the statements, and allows us to get into the main subject more
quickly. However, for the sake of completeness, we added in
\S\ref{sec:subgps} a discussion of the case where~$\Delta$ is not a
basis.  Moreover, this turns out to be necessary in order to deal
thoroughly with the behaviour of restriction to reflection subgroups.
\end{remark}

%%%%%%%%%%%%%%%%%%%%%%%%%%%%%%%%%%%%%%%%%%%%%%%%%%%%%%%%%%%%%%
%
%   SECTION : LIMIT POINTS OF NORMALIZED ROOTS AND ISOTROPIC CONE
%
%%%%%%%%%%%%%%%%%%%%%%%%%%%%%%%%%%%%%%%%%%%%%%%%%%%%%%%%%%%%%%

\section{Limit points of normalized roots and isotropic cone}
\label{sec:limit}

Let $(\Phi,\Delta)$ be a based root system in $(V,B)$, with associated Coxeter group $W$ (as defined in Definition~\ref{def:root}).
We suppose that $\Delta$ is a basis for $V$; analogous results remain true in full generality (see \S\ref{sse:transverse} for details).

\medskip

When $W$ is finite,~$\Phi$ is also finite and the distribution of the
roots in the space~$V$ is well studied. However, when~$W$ is infinite,
the root system is infinite and we have, as far as we know, not many
tools to study the distribution of the roots over~$V$.  The
asymptotical behaviour of roots is one of them. This section deals
with a first step of this study: we show that the ``lengths'' of the
roots tend to infinity, and that the limit points of the
``directions'' of the roots are included in the isotropic cone of the
bilinear form associated to $\Phi$. In order to get a first grip of
what happens, we begin with some enlightening examples.

%   Subsection : Roots and normalized roots in rank $2$, $3$, $4$, and general setting
%
%%%%%%%%%%%%%%%%%%%%%%%%%%%%%%%%%%%%%%%%%%%%%%%%%%%%%%%%%%%%%%

\subsection{Roots and normalized roots in rank $2$, $3$, $4$, and general setting} \label{subsec:ex}
~

Since $\Phi = \pp \sqcup (-\pp)$, it is enough to look at the positive roots, which are inside the simplicial cone $\cone(\DD)$.

\begin{ex}\label{ex:rk2} (Rank $2$: affine and non-affine representations of infinite dihedral groups){\bf .}
 Let $(\Phi,\Delta)$ be a based root system of rank $2$, as defined in
\S\ref{subsec:georep}. We get a Coxeter group~$W$ of rank ~$2$,
geometrically represented in a $2$-dimensional vector space~$V$
(together with a bilinear form~$B$), where $V$ is generated by two
simple roots~$\alpha,\beta$. Assume that $W$ is an infinite dihedral
group, so~${B(\alpha,\beta) \leq -1}$.

\noindent Suppose first that $B(\alpha, \beta) = -1$, i.e., that $\Phi$ is affine and with the classical geometric representation. Then
any positive root has the following form:
\[
\rho_n = (n+1)\alpha + n \beta,\textrm{ or }\rho'_n = n\alpha + (n+1)\beta,\textrm{ for } n\in \mathbb{N}.
\]

\noindent If we fix a (Euclidean) norm on $V$ (e.g., such that $\{\alpha,\beta\}$ is an orthonormal basis), then the norms of the
roots tend to infinity, but their directions tend to the line generated by $\alpha+\beta$ as depicted in Figure~\ref{fig:dih1}~(a). Note that this
line is precisely the isotropic cone of the bilinear form $B$, i.e., the set
\[
Q := \{v \in V ~|~ q(v)=0 \} \ \text{, where } q(v)=B(v,v)\ .
\]

\begin{figure}[!ht]
\centering
\captionsetup{width=0.9\textwidth}
\scalebox{0.9}{
%%%%%%%%%%%%%%%%%%%%%%%%%%%%%%%%%%%%%%%%%%%%%%%%%%%%%%%%%%%%%%%%
%\input{Img_Dihedral1.tex}

%%
\begin{tabular}{c@{\hspace{1cm}}c}

\begin{tikzpicture}
	[scale=1,
	 pointille/.style={densely dashed},
	 axe/.style={color=black, very thick},
	 rotate=45]

\coordinate (O) at (0,0);
\fill (O) circle (0.05);

\shade [shading=axis,top color=white!75!black,bottom color=white,shading angle=180] (O) -- (4,0) arc (0:90:4) -- (O);

\draw[axe,->] (O) -- (1,0) node[label=below right :{$\alpha=\rho_0$}] {};
\draw[axe,->] (O) -- (0,1) node[label=below left :{$\beta=\rho'_0$}] {};

\draw[->] (O) -- (2,1) node[label=right:{$\rho_1$}] {};
\draw[->] (O) -- (1,2) node[label=left:{$\rho'_1$}] {};

\draw[->] (O) -- (3,2) node[label=right:{$\rho_2$}] {};
\draw[->] (O) -- (2,3) node[label=left:{$\rho'_2$}] {};

\draw[->] (O) -- (4,3) node[label=right:{$\rho_3$}] {};
\draw[->] (O) -- (3,4) node[label=left:{$\rho'_3$}] {};

\draw[pointille,red] (-0.5,-0.5) -- (4,4) node[label=above right:{$Q$}] {};

\draw[pointille,color=green!50!black] (-1,2) -- (2,-1) node[label=right:{$V_1$}] {};

\end{tikzpicture}

&

\begin{tikzpicture}
	[scale=1,
	 pointille/.style={densely dashed},
	 axe/.style={color=black, very thick},
	 rotate=45]

\coordinate (O) at (0,0);
\fill (O) circle (0.05);

\def\arctandemi{40.97}

\shade [shading=axis,top color=white!75!black,bottom color=white,shading angle=180] (O) -- (4,0) arc (0:\arctandemi:4) -- (O);
\shade [shading=axis,top color=red!75!black,bottom color=white,shading angle=180] (O) -- (\arctandemi:4) arc (\arctandemi:90-\arctandemi:4) -- (O);
\shade [shading=axis,top color=white!75!black,bottom color=white,shading angle=180] (O) -- (90-\arctandemi:4) arc (90-\arctandemi:90:4) -- (O);

\draw[pointille,red] (-0.58,-0.5) -- (3.02,2.63) {};
\draw[pointille,red] (-0.5,-0.58) -- (2.63,3.02)  {};

\node[red] at (3,3) {$\ Q^-$};

\draw[axe,->] (O) -- (1,0) node[label=below right :{$\alpha=\rho_0$}] {};
\draw[axe,->] (O) -- (0,1) node[label=below left :{$\beta=\rho'_0$}] {};

\draw[->] (O) -- (2.02,1) node[label=right:{$\rho_1$}] {};
\draw[->] (O) -- (1,2.02) node[label=left:{$\rho'_1$}] {};

\draw[->] (O) -- (3.0804,2.02) node[label=right:{$\rho_2$}] {};
\draw[->] (O) -- (2.02,3.0804) node[label=left:{$\rho'_2$}] {};

\draw[->] (O) -- (4.202408,3.0804) node[label=right:{$\rho_3$}] {};
\draw[->] (O) -- (3.0804,4.202408) node[label=left:{$\rho'_3$}] {};

\draw[pointille,color=green!50!black] (-1,2) -- (2,-1) node[label=right:{$V_1$}] {};

\end{tikzpicture}

\\
\\
(a) $B(\alpha,\beta)=-1$ & (b) $B(\alpha,\beta)=-1.01<-1$

\end{tabular}

%%%%%%%%%%%%%%%%%%%%%%%%%%%%%%%%%%%%%%%%%%%%%%%%%%%%%%%%%%%%%%
}
\caption{The isotropic cone $Q$ and the first positive roots of an infinite based root system of rank $2$. (a): in the (classical) affine
representation. (b): in a non-affine representation (the red part $Q^-$ denotes the set ${\{v\in V ~|~ q(v)<0 \}}$).}
\label{fig:dih1}
\end{figure}
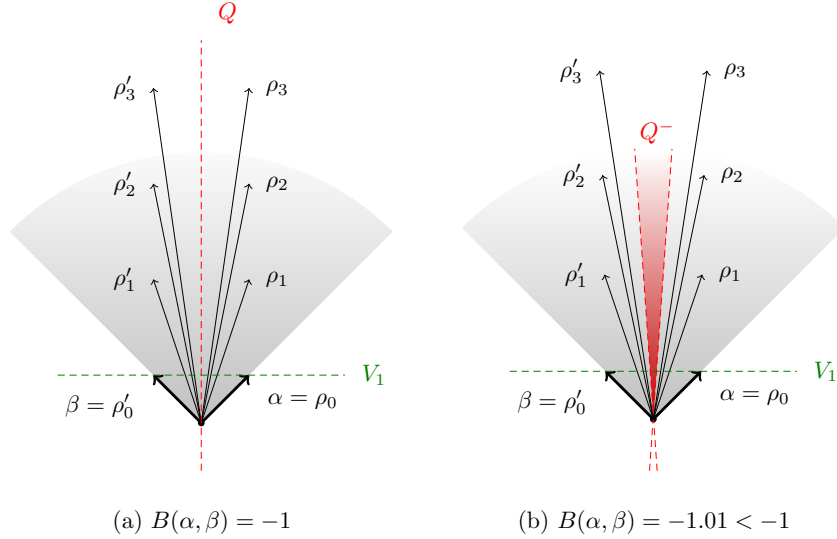

\noindent In a general geometric representation of $W$, $\Phi$ can be non-affine, i.e., $B(\alpha, \beta) =k$ with $k<-1$. Then
the isotropic cone $Q$ consists of two lines (generated by~{{$(-k\pm\sqrt{k^2-1})\alpha+\beta$}}). If we draw the roots, we note that, again,
their norms diverge to infinity and their directions tend to the two directions of the lines of $Q$; see Figure~\ref{fig:dih1}~(b), and
\cite[p.3]{howlett:coxeter} for a detailed computation.
\end{ex}

Let us go back to the general case of an infinite based root system of
rank~$n$. In the simple example of dihedral groups, we saw that the
roots themselves have no limit points; this phenomenon is actually
general, so we are rather interested in the asymptotical behaviour of
their directions. In order to talk properly about limits of
directions, we want to ``normalize'' the roots and construct ``unit
vectors'' representing each root.  One simple way to do so is to
intersect the line $\mathbb R \beta$ generated by a root $\beta$ in
$\Phi$ with the affine hyperplane~$V_1$ spanned by the simple roots
(seen as points), i.e., the affine hyperplane
\[
V_1 := \{v\in V ~|~ \sum_{\alpha \in \Delta} v_{\alpha}=1 \} \ ,
\]
where the $v_{\alpha}$'s are the coordinates of $v$ in the basis
$\Delta$ of simple roots. That way, we obtain what we call the set
of \emph{normalized roots}, denoted by $\hpp$:
\[
\hpp:=\bigcup_{\beta\in\Phi}  \mathbb R\beta \cap V_1.
\]
Let us describe this set more precisely.  Set
\[ \begin{array}{lclll}
V_0 & := & \{v\in V ~|~ |v|_1=0\}, & \text{and} \\
V_0^+ & := & \{v\in V ~|~ |v|_1>0\}, & \text{where} & |v|_1 := \sum_{\alpha \in \Delta} v_{\alpha}.\\
\end{array}
\]
Note that $|\cdot|_1$ is not a norm on $V$. Since all the positive roots are in the
half-space $V_0^+$, the entire root system $\Phi$ is contained in $V\setminus V_0$. So the following normalization map can
be applied to~$\Phi$:
\[
\begin{array}{ccl}
  V\setminus V_0 & \to & V_1 \\
  v & \mapsto & \widehat{v}:=\frac{v}{|v|_1}.
\end{array}
\]
For any subset $A$ of $V\setminus V_0$, write $\h{A}$ for the set $\{\h{a} ~|~ a \in A\}$. Because for $\rho\in \Phi$, $\mathbb
R \rho\cap \Phi=\{\rho,-\rho\}$, it is then obvious that $\Phi^+$ is in bijection with
\[
\hpp=\h{\pp}=\h{-\pp} = \{\h\rho\,|\,\rho\in\Phi^+\}.
\]

\begin{remark} \label{rem:projective}
Obviously, we could also have considered other affine hyperplanes to
``cut'' the rays of $\Phi$; it suffices that the chosen hyperplane be
``transverse to $\Phi^+$'', and this is discussed
in~\S\ref{sse:transverse}. We could also have considered the roots
abstractly, in the projective space $\mathbb{P} V$. The principal advantage
to consider an affine hyperplane explicitly, such as $V_1$, is to
visualize positive roots in an affine subspace of dimension~$n-1$,
inside an $n$-simplex (here $n=\dim V$).  Indeed, the simple roots are
in~$V_1$, so $\hpp$ lies in the convex hull $\conv(\Delta)$, which is
an $n$-simplex in $V_1$.  Note that as a convex polytope,
$\conv(\Delta)$ is closed and compact, which is practical when
studying sequences of roots.  From now on, in examples, we only draw
the normalized roots inside the $n$-simplex $\conv(\Delta)$.
\end{remark}

The aim of this work is to study the accumulation points of $\hpp$, i.e., the set of limit points of normalized roots. We first examine
its relation with the isotropic cone~$Q$.

\begin{ex}[Normalized roots in the dihedral case]
In the infinite dihedral case, the ``normalized'' version of Figure~\ref{fig:dih1} is Figure~\ref{fig:dih2}. Here $\hpp$ is contained in
the segment $[\alpha,\beta]$ and there are one or two limit points of normalized roots (depending on whether ${B(\alpha,\beta)=-1}$
or not), and the set of limit points is always equal to the intersection $Q \cap V_1 = \h{Q}$.

\begin{figure}[!ht]
\centering
\captionsetup{width=0.9\textwidth}
\scalebox{0.8}{
%%%%%%%%%%%%%%%%%%%%%%%%%%%%%%%%%%%%%%%%%%%%%%%%%%%%%%%%%%%%%%
%\input{Img_Dihedral2.tex}
 \begin{tabular}{c@{\hspace{1cm}}c}

\begin{tikzpicture}
	[scale=1,
	 pointille/.style={densely dashed},
	 sommet/.style={inner sep=2pt,circle,draw=black,fill=blue,thick,anchor=base},
	 ]

\draw[pointille,color=green!50!black] (-2.5,0) -- (2.5,0) node[label=below right:{$V_1$}] {};

\coordinate (O) at (0,0);
\fill[red] (O) circle (0.05) node[label=below:{$\h{Q}$}] {};

\fill (2,0) circle (0.05) node[label=above :{$\alpha=\rho_1$}] {};
\fill (-2,0) circle (0.05) node[label=above :{$\beta=\rho'_1$}] {};

\fill (0.666,0) circle (0.05) node[label=above:{$\widehat\rho_2$}] {};
\fill (-0.666,0) circle (0.05) node[label=above :{$\widehat\rho'_2$}] {};

\fill (0.4,0) circle (0.05) ;
\fill (-0.4,0) circle (0.05) ;

\fill (2/7,0) circle (0.05) ;
\fill (-2/7,0) circle (0.05) ;

\node[label=above:{$\cdots$}] at (O) {};

\coordinate (ancre) at (-3,1);

\node[sommet,label=left:$s_\alpha$] (alpha) at (ancre) {};
\node[sommet,label=right:$s_\beta$] (beta) at ($(ancre)+(0.75,0)$) {} edge[thick] node[auto,swap] {$\infty$} (alpha);

\end{tikzpicture}

&

\begin{tikzpicture}
	[scale=1,
	 pointille/.style={densely dashed},
	 sommet/.style={inner sep=2pt,circle,draw=black,fill=blue,thick,anchor=base},
	 ]

\draw[pointille,color=green!50!black] (-2.5,0) -- (-0.1380,0);
\draw[pointille,color=green!50!black] (0.1380,0) -- (2.5,0) node[label=below right:{$V_1$}] {};

\coordinate (O) at (0,0);
\fill[red] (0.1380,0) circle (0.05);
\fill[red] (-0.1380,0) circle (0.05);

\draw[color=red] (-0.1380,0) -- (0.1380,0) {};

\fill (2,0) circle (0.05) node[label=above :{$\alpha=\rho_1$}] {};
\fill (-2,0) circle (0.05) node[label=above :{$\beta=\rho'_1$}] {};

\fill (0.675,0) circle (0.05) node[label=above:{$\widehat\rho_2$}] {};
\fill (-0.675,0) circle (0.05) node[label=above :{$\widehat\rho'_2$}] {};

\fill (0.4158,0) circle (0.05) ;
\fill (-0.4158,0) circle (0.05) ;

\fill (0.3081,0) circle (0.05) ;
\fill (-0.3081,0) circle (0.05) ;

\node[label=above:{$\cdots$}] at (O) {};
\draw[red] node[label=below:{\ $\h{Q^-}$}] at (O) {};

\coordinate (ancre) at (2.5,1);

\node[sommet,label=left:$s_\alpha$] (alpha) at (ancre) {};
\node[sommet,label=right:$s_\beta$] (beta) at ($(ancre)+(0.75,0)$) {} edge[thick] node[auto,swap] {$\oo(-1.01)$} (alpha);

\end{tikzpicture}

\\
\\
(a) $B(\alpha,\beta)=-1$ & (b) $B(\alpha,\beta)=-1.01<-1$

\end{tabular}

%%%%%%%%%%%%%%%%%%%%%%%%%%%%%%%%%%%%%%%%%%%%%%%%%%%%%%%%%%%%%%%%
}
\caption{The normalized isotropic cone $\h{Q}$ and the first normalized roots of an infinite based root system of rank $2$. (a): in
the (classical) affine representation. (b): in a non-affine representation.}
\label{fig:dih2}
\end{figure}
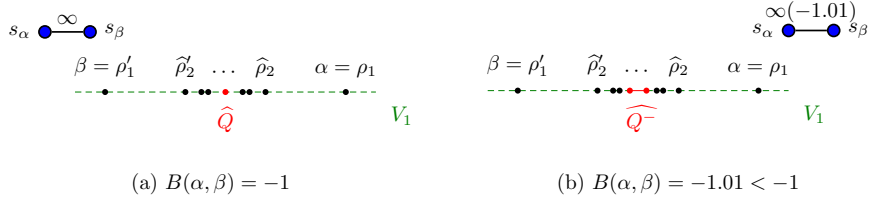
\end{ex}

\noindent \textbf{Notation.} The graph we draw to define a based root system is the same as the classical Coxeter graph, except
that, when the label of the edge $s_\alpha$---$s_\beta$ is $\oo$, we specify in parenthesis the value of $B(\alpha,\beta)$ if it is
not $-1$ (i.e., when we do not consider the classical representation).

\medskip

We give now some examples and pictures in rank $3$ and $4$.

\begin{ex}[Rank $3$]
\label{ex:rk3}
In Figures \ref{fig:sage}(a) (in the introduction) and~\ref{fig:EG2tilde} through~\ref{fig:Eoo}, we draw the normalized isotropic cone
$\h{Q}$ (in red), the $3$-simplex $\cone(\Delta)$ (in green), and the first normalized roots (in blue), for five different based root
systems of rank $3$. Note that the notion of depth used in the captions is a measure of the ``complexity'' of a root, which is defined
in \S\ref{subsec:limit}.

\begin{figure}[!ht]
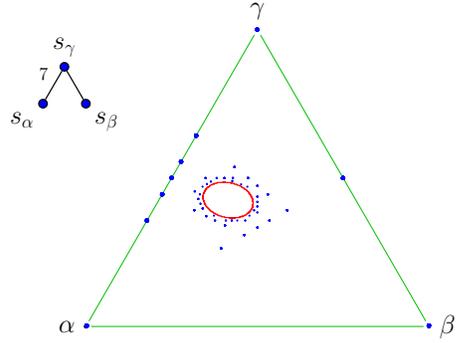

\begin{minipage}[t]{.48\linewidth}
\centering
\captionsetup{width=\textwidth}
\scalebox{0.57}{
%%%%%%%%%%%%%%%%%%%%%%%%%%%%%%%%%%%%%%%%%%%%%%%%%%%%%%%%%%%%%%%%
%\input{Img_E_G2tilde.tex}

% [inline block 0: 2 envs, 73564 chars -> data_tex | \begin{tikzpicture} 	[scale=2,...]


%%%%%%%%%%%%%%%%%%%%%%%%%%%%%%%%%%%%%%%%%%%%%%%%%%%%%%%%%%%%%%%%%%
}
\caption{The normalized isotropic cone $\h{Q}$ and the first normalized roots (with depth $\leq 10$) for the based root system
with labels $2,3,7$.}
\label{fig:E237}
\end{minipage}
\end{figure}

\noindent The normalized roots seem again to tend quickly towards $\h{Q}$. In the affine cases, $\h{Q}$ contains
only one point, which is the intersection of the line $V^{\perp}$ (the radical of $B$) with $V_1$. In rank $3$, there are three
different types: $\widetilde{A_2}$, $\widetilde{B_2}$, and $\widetilde{G_2}$. The latter is drawn in Figure~\ref{fig:EG2tilde}.
Otherwise, $\h{Q}$ is always a conic (because the signature of $B$ is $(2,1)$),  and moreover it is always an ellipse in the
classical geometric representation (see~\S\ref{sse:transverse} for more details).

\medskip

\begin{figure}[!h]
\begin{minipage}[t]{.48\linewidth}
\centering
\captionsetup{width=\textwidth}
\scalebox{0.57}{
%%%%%%%%%%%%%%%%%%%%%%%%%%%%%%%%%%%%%%%%%%%%%%%%%%%%%%%%%%%%%%%%%%
%\input{Img_E_2-1.1-1.1.tex}

% [inline block 1: 2 envs, 213277 chars -> data_tex | \begin{tikzpicture} 	[scale=2,...]


%%%%%%%%%%%%%%%%%%%%%%%%%%%%%%%%%%%%%%%%%%%%%%%%%%%%%%%%%%%%%%%%%%
}
\caption{The normalized isotropic cone $\h{Q}$ and the first normalized roots (with depth $\leq 8$) for the based root system
with labels $\oo, \oo(-1.5), 4$.}
\label{fig:Eoo}
\end{minipage}
\end{figure}

\noindent Some rank $2$ root subsystems appear in the pictures; they correspond to dihedral reflection subgroups. The
normalized roots corresponding to such a reflection subgroup, generated by two reflections $s_{\rho_1}$ and $s_{\rho_2}$,
lie in the line containing the normalized roots $\h{\rho_1}$ and $\h{\rho_2}$. Because of Proposition~\ref{prop:dihedral}~(ii),
the subgroup is infinite if and only if $\h{Q}$ intersects this line. In Figure~\ref{fig:sage}(a), for the group with labels $5,3,3$,
the line joining $\gamma$ and $\h{\rho}=\frac{\alpha + \beta}{2}$ intersects the ellipse in two points, as predicted by
Example~\ref{ex:dihcex}.

\noindent In general, the behaviour for standard parabolic dihedral subgroups is seen on the faces of the simplex, where three
situations can occur. The ellipse $\h{Q}$ can either cut an edge $[\alpha, \beta]$ in two points, or be tangent, or not intersect it,
depending on whether $B(\alpha,\beta) <-1$, $=-1$, or~$>-1$ respectively; see in particular Figures~\ref{fig:E-1.1} and~\ref{fig:Eoo}.
\end{ex}

\begin{remark} \label{rk:fractal}
When $\h{Q}$ is included in the simplex, it seems that the limit points of normalized roots cover the whole ellipse, whereas in
the other cases the behaviour is more complicated. We discuss this phenomenon in \S\ref{subsec:fractal}.
\end{remark}

\begin{ex}[Rank $4$]
\begin{figure}[!ht]
\begin{minipage}[b]{.48\linewidth}
\centering
\captionsetup{width=\textwidth}

\scalebox{0.75}{
%%%%%%%%%%%%%%%%%%%%%%%%%%%%%%%%%%%%%%%%%%%%%%%%%%%%%%%%%%%%%%%%%
%\input{Img_3d-333333.tex}

\begin{tikzpicture}
	[scale=2,
	 q/.style={red,line join=round,thick},
	 racine/.style={blue},
	 racinesimple/.style={black},
	 racinedih/.style={blue},
	 sommet/.style={inner sep=2pt,circle,draw=black,fill=blue,thick,anchor=base},
	 rotate=0]

%% La courbe Q

\node[anchor=south west,inner sep=0pt] at (0,0) {\includegraphics[width=8cm]{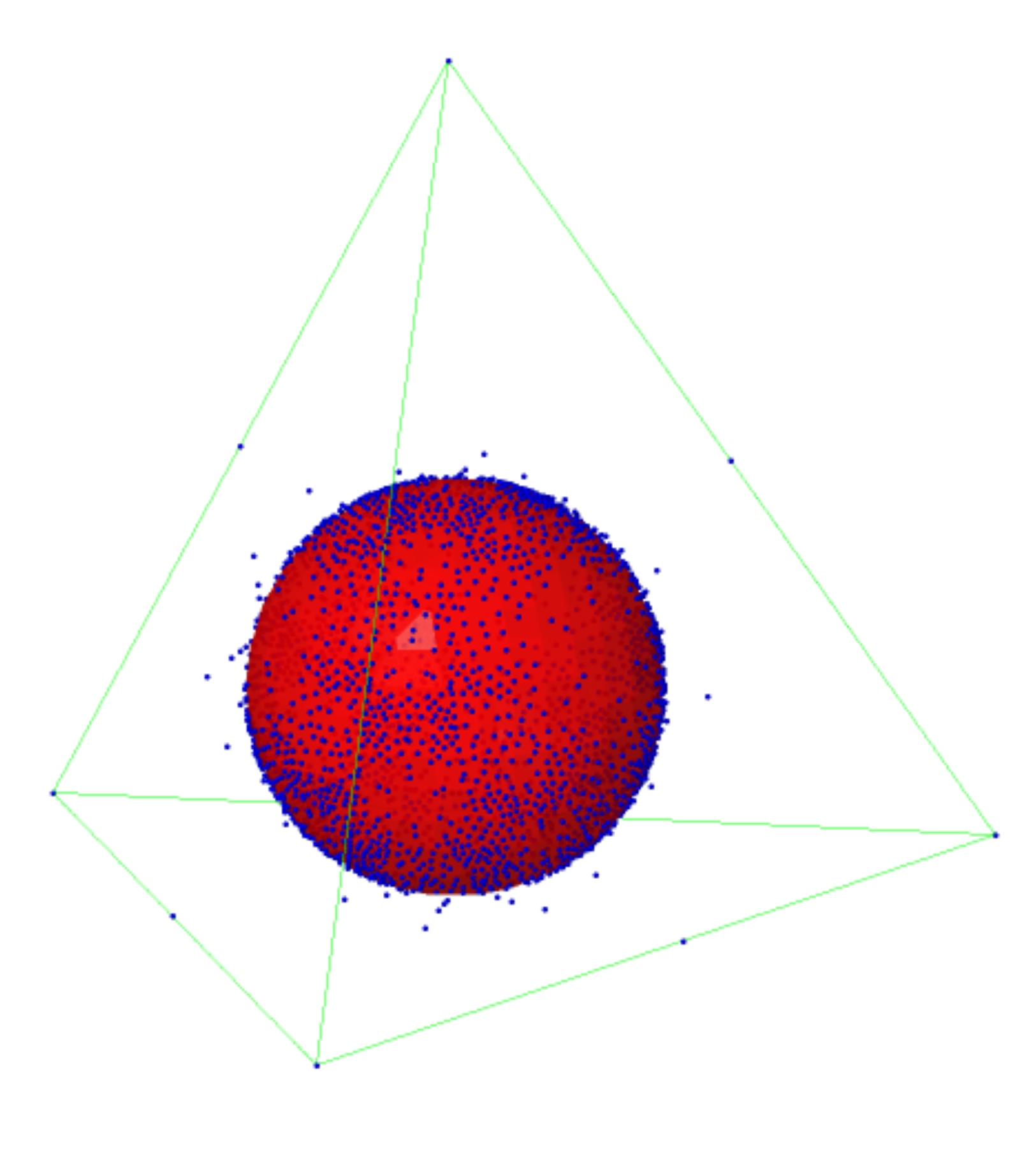}};

\coordinate (ancre) at (0,3.5);
\node[sommet,label=left:{\LARGE $s_\alpha$}] (alpha) at (ancre) {};
\node[sommet,label=right :{\LARGE $s_\beta$}] (beta) at ($(ancre)+(0.5,0)$) {} edge[thick] node[auto,swap] {} (alpha);
\node[sommet,label=above:{\LARGE $s_\delta$}] (delta) at ($(ancre)+(0.22,0.43)$) {} edge[thick] node[auto,swap] {} (alpha) edge[thick] node[auto] {} (beta);
\node[sommet,label=below:{\LARGE $s_\gamma$}] (gamma) at ($(ancre)+(0.125,-0.215)$) {} edge[thick] node[auto,swap] {} (alpha) edge[thick] node[auto] {} (beta) edge[thick] node[midway, right] {} (delta);

\end{tikzpicture}

%%%%%%%%%%%%%%%%%%%%%%%%%%%%%%%%%%%%%%%%%%%%%%%%%%%%%%%%%%%%%%%%%
}
\caption{The normalized isotropic cone $\h{Q}$ and the first normalized roots (with depth $\leq 8$) for the based root system with
diagram the complete graph with labels~$3$.}
\label{fig:E3d3}
\end{minipage}
\hfill
\begin{minipage}[b]{.48\linewidth}
\centering
\captionsetup{width=\textwidth}
% \scalebox{0.5}{\includegraphics{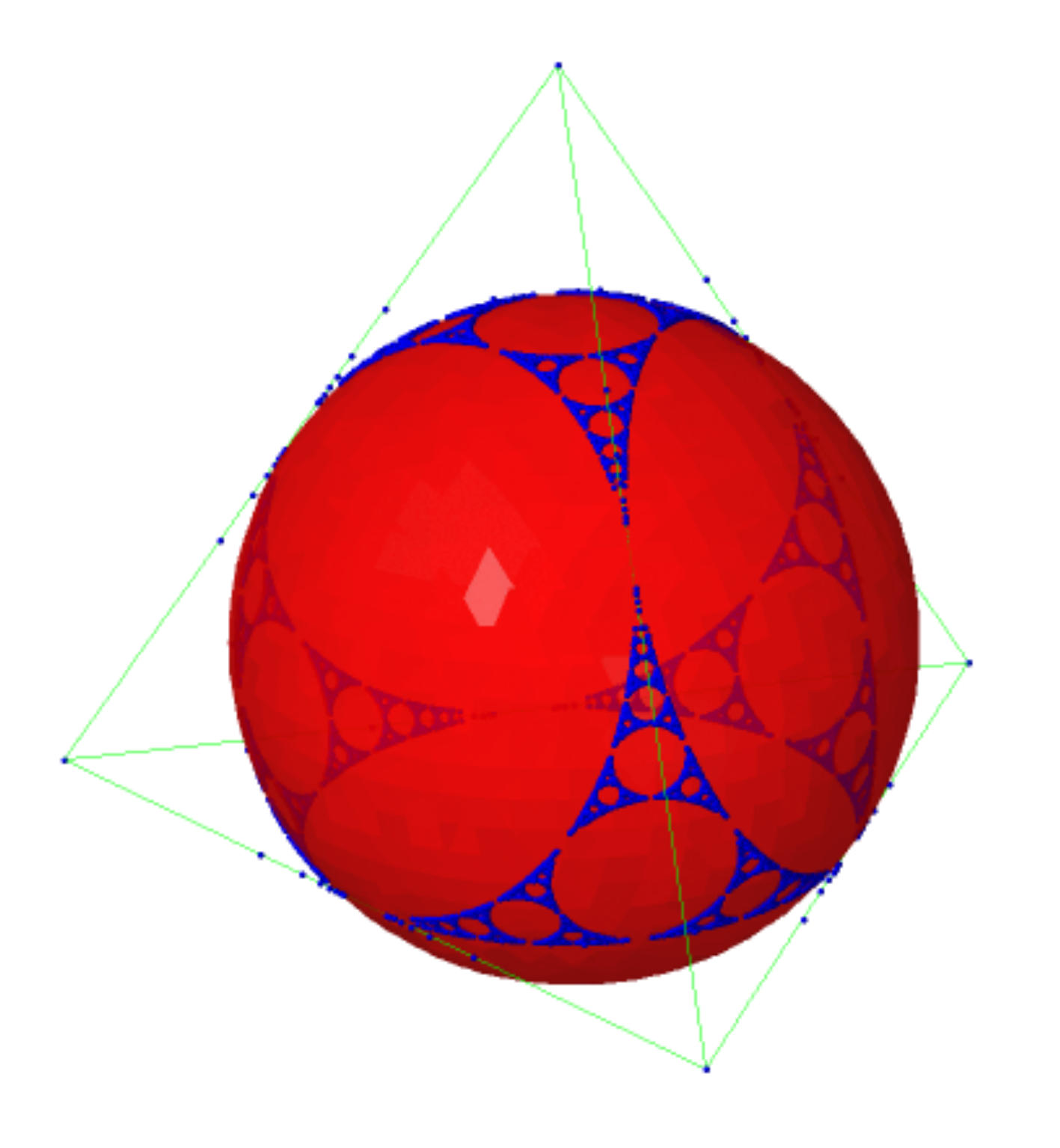}}
\scalebox{0.75}{
%%%%%%%%%%%%%%%%%%%%%%%%%%%%%%%%%%%%%%%%%%%%%%%%%%%%%%%%%%%%%%%%%
%\input{Img_3d-oooooo.tex}

\begin{tikzpicture}
	[scale=2,
	 q/.style={red,line join=round,thick},
	 racine/.style={blue},
	 racinesimple/.style={black},
	 racinedih/.style={blue},
	 sommet/.style={inner sep=2pt,circle,draw=black,fill=blue,thick,anchor=base},
	 rotate=0]

%% La courbe Q

\node[anchor=south west,inner sep=0pt] at (0,0) {\includegraphics[width=8cm]{Img_3d-ooo-2.pdf}};

\coordinate (ancre) at (0,3.5);
\node[sommet,label=left:{\LARGE $s_\alpha$}] (alpha) at (ancre) {};
\node[sommet,label=right :{\LARGE $s_\beta$}] (beta) at ($(ancre)+(0.6,0.05)$) {} edge[thick] node[% very
near end, below] {{\footnotesize $\infty$}} (alpha);
\node[sommet,label=above:{\LARGE $s_\delta$}] (delta) at ($(ancre)+(0.27,0.5)$) {} edge[thick] node[auto,swap] {{\footnotesize $\infty$}} (alpha) edge[thick] node[auto] {{\footnotesize $\infty$}} (beta);
\node[sommet,label=below:{\LARGE $s_\gamma$}] (gamma) at ($(ancre)+(0.3,-0.25)$) {} edge[thick] node[auto] {{\footnotesize $\infty$}} (alpha) edge[thick] node[auto,swap] {{\footnotesize $\infty$}} (beta) edge[thick] node[auto% near end
,right] {{\footnotesize $\infty$}} (delta);

\end{tikzpicture}

%%%%%%%%%%%%%%%%%%%%%%%%%%%%%%%%%%%%%%%%%%%%%%%%%%%%%%%%%%%%%%%%%
}
\caption{The normalized isotropic cone $\h{Q}$ and the first normalized roots (with depth $\leq 8$) for the based root system with
diagram the complete graph with labels~$\oo$.}
\label{fig:E3doo}
\end{minipage}
\end{figure}

Figures~\ref{fig:sage}(b) (in the introduction),
and~\ref{fig:E3d3}-\ref{fig:E3doo} illustrate some based root systems
of rank $4$, together with the tetrahedron $\conv(\Delta)$. Analogous
properties seem to be true: the limit points are in $\h{Q}$, and the
way how $\h{Q}$ cuts a facet depends on whether the associated
standard parabolic subgroup of rank $3$ is infinite non affine,
affine, or finite. Moreover, Remark~\ref{rk:fractal} still holds: in
Figure \ref{fig:E3d3} the limit points seem to cover the whole of
$\h{Q}$, whereas in Figures~\ref{fig:sage}(b) and~\ref{fig:E3doo},
some Apollonian gasket shapes appear. This fractal behaviour is
discussed in \S\ref{subsec:fractal}.
\end{ex}

%   Subsection : The limit points of normalized roots lie in the isotropic cone
%
%%%%%%%%%%%%%%%%%%%%%%%%%%%%%%%%%%%%%%%%%%%%%%%%%%%%%%%%%%%%%%

\subsection{The limit points of normalized roots lie in the isotropic cone} \label{subsec:limit}
~

Recall that we denote by $q$ the quadratic form associated to $B$, and
by $Q$ the isotropic cone:
  \[Q := \{v \in V ~|~ q(v)=0 \}, \text{ where } q(v)=B(v,v).\]
The following theorem summarizes our first observations.

\begin{theo} \label{thm:inclusion}
Consider an injective sequence of positive roots $(\rho_n)_{n\in \mathbb{N}}$, and suppose that $(\h{\rho_n})$ converges to a limit
$\ell$. Then:
\begin{enumerate}[(i)]
\item the norm $||\rho_n||$ tends to infinity (for any norm on $V$);
\item the limit $\ell$ lies in $\h{Q}=Q\cap V_1$.
\end{enumerate}
In other words, the set $\h{Q}$ of accumulation points of normalized roots $\hpp$ is contained in the isotropic cone.
\end{theo}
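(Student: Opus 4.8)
**The plan is to prove both claims together by analyzing what happens to the quadratic form $q$ along a divergent sequence of roots.** The key observation is that every root $\rho\in\Phi$ satisfies $q(\rho)=B(\rho,\rho)=1$, because $\rho$ lies in the $W$-orbit of $\Delta$, the form $B$ is $W$-invariant, and $B(\alpha,\alpha)=1$ for each simple root $\alpha\in\Delta$ (condition (iii) of Definition~\ref{def:root}). So the entire root system sits on the quadric $\{v\in V\mid q(v)=1\}$. I would exploit the tension between this fact and the normalization: a normalized root $\h{\rho}=\rho/|\rho|_1$ satisfies $q(\h{\rho})=q(\rho)/|\rho|_1^2=1/|\rho|_1^2$. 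Thus the value of $q$ at a normalized root is completely controlled by the single scalar $|\rho|_1$.

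\textbf{First I would prove (i), that $|\rho_n|_1\to\infty$, which then forces the norm to diverge.} Suppose for contradiction that $(\rho_n)$ is an injective sequence of positive roots with $|\rho_n|_1$ bounded. Since all positive roots lie in $\cone(\Delta)\cap V_0^+$ and have nonnegative coordinates summing to $|\rho_n|_1$, a bound on $|\rho_n|_1$ confines the $\rho_n$ to a bounded region of $V$ (indeed to a bounded multiple of the simplex $\conv(\Delta)$). But the coordinates of roots in the basis $\Delta$ are controlled by the combinatorics of $W$, and I would argue that only finitely many roots can have $|\rho|_1$ below any fixed bound --- concretely, because each application of a generator increases complexity, and bounded $\ell^1$-norm permits only finitely many lattice-like positive combinations realizing actual roots. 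This contradicts injectivity. Once $|\rho_n|_1\to\infty$, note that $|\cdot|_1$ is a linear functional, hence dominated by any norm on the finite-dimensional space $V$ (up to a constant on the relevant cone), so $\|\rho_n\|\to\infty$ for every norm. This is the step I expect to require the most care: making rigorous the claim that bounded $\ell^1$-norm captures only finitely many roots. I would lean on Proposition~\ref{prop:dihedral}(i), or on a depth argument, to pin this down.

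\textbf{Then (ii) follows cleanly by continuity.} We have $q(\h{\rho_n})=1/|\rho_n|_1^2$, and by part (i) the denominator tends to infinity, so $q(\h{\rho_n})\to 0$. The quadratic form $q$ is a polynomial, hence continuous, and by hypothesis $\h{\rho_n}\to\ell$; therefore
\[
q(\ell)=\lim_{n\to\infty} q(\h{\rho_n})=\lim_{n\to\infty}\frac{1}{|\rho_n|_1^2}=0.
\]
This places $\ell$ in the isotropic cone $Q$. Since each $\h{\rho_n}$ lies in the affine hyperplane $V_1$, which is closed, the limit $\ell$ lies in $V_1$ as well, so in fact $\ell\in Q\cap V_1=\h{Q}$, as required.

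\textbf{Finally, assembling the statements gives the theorem.} Every accumulation point of $\hpp$ arises as such a limit $\ell$ of some convergent subsequence of normalized roots, and since $\hpp$ is infinite and contained in the compact set $\conv(\Delta)$, accumulation points exist; each one lies in $\h{Q}$ by the argument above. The conceptual heart is simply that $q\equiv 1$ on $\Phi$ while the normalization divides by a quantity that blows up; the only genuinely technical point is the finiteness argument underlying the divergence of $|\rho_n|_1$, which is where I would spend the bulk of the proof.
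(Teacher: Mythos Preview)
Your overall architecture is exactly the paper's: part~(ii) is identical (compute $q(\h{\rho_n})=1/|\rho_n|_1^2$ and pass to the limit), and for part~(i) you correctly isolate the one nontrivial step, namely that only finitely many positive roots can have bounded norm. But you have not actually proved that step---your sentence about ``lattice-like positive combinations'' is not an argument (root coordinates are not integers in general), and ``each application of a generator increases complexity'' is false as stated (a simple reflection can lower depth). You yourself flag this as the place needing work, and indeed it is the whole content of part~(i).

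The paper fills this gap with an explicit quantitative lemma (Lemma~\ref{prop:dpnorm}): choosing $\|\cdot\|$ so that $\Delta$ is orthonormal, one shows by induction on depth that $\|\rho\|^2\geq 1+\lambda(\dep(\rho)-1)$ for a fixed $\lambda>0$. The induction step writes $\rho=s_\alpha(\rho')$ with $\dep(\rho')=\dep(\rho)-1$ and $B(\alpha,\rho')<0$, expands $\|\rho\|^2=\|\rho'\|^2+4B(\alpha,\rho')^2-4B(\alpha,\rho')\langle\alpha,\rho'\rangle$, and uses Proposition~\ref{prop:dihedral}(i) to bound $B(\alpha,\rho')^2\geq\kappa^2$ from below. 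Since an injective sequence of roots has depth tending to infinity (only finitely many roots of each depth, as $S$ is finite), the norm diverges. You named both ingredients (depth and Proposition~\ref{prop:dihedral}(i)) but did not assemble them; doing so is precisely the missing piece.
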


\begin{remark} \label{rk:dyer}
M.~Dyer proved independently this property in the context of his work on imaginary cone \cite{dyer:imaginary}\footnote{M.~Dyer,
personal communication, September 2011.}, extending a study of V.~Kac (in the framework of Weyl groups of Lie algebras), who
states that the convex hull of the limit points correspond to the closure of the imaginary cone (see \cite[Lemma 5.8 and
Exercise~5.12]{kac}).
\end{remark}

Note that this theorem has the following consequence (which can of course be proved more directly using the fact that $W$ is
discrete in $\GL(V)$, see \cite{krammer:conjugacy} or \cite[Prop.~6.2]{humphreys}):

\begin{coro}
The set of roots of a Coxeter group is discrete.
\end{coro}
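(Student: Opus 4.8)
The plan is to argue by contradiction, deriving the discreteness of $\Phi$ directly from part~(i) of Theorem~\ref{thm:inclusion}. Suppose $\Phi$ were not discrete in $V$; then it would admit an accumulation point $v\in V$, and from it I would extract an injective sequence of roots $(\rho_n)_{n\in\mathbb{N}}$ with $\rho_n\to v$. The first point to record is that $v\neq 0$: since every element of $W$ preserves $B$ and every simple root satisfies $B(\alpha,\alpha)=1$, each root lies on the quadric $\{q=1\}$, and passing to the limit gives $q(v)=1$, so $v\neq 0$. This is exactly the feature that will eventually clash with the divergence of the norms.

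Next I would reduce to positive roots. Since $\Phi=\pp\sqcup(-\pp)$, infinitely many of the $\rho_n$ lie in $\pp$ or infinitely many lie in $-\pp$; after extracting such a subsequence and, in the second case, replacing each $\rho_n$ by $-\rho_n$ (which converges to $-v$, still with $q(-v)=1\neq 0$), I may assume $\rho_n\in\pp$ for all $n$ and $\rho_n\to v$. Because the positive roots lie in the closed cone $\cone(\DD)$, the limit $v$ also lies in $\cone(\DD)$. Now positive independence of $\DD$ (Definition~\ref{def:root}(i)) enters: it gives $\cone(\DD)\cap V_0=\{0\}$, so from $v\neq 0$ I conclude $v\notin V_0$, i.e. $|v|_1>0$. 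Consequently the normalization map is continuous at $v$, and $\h{\rho_n}=\rho_n/|\rho_n|_1\to v/|v|_1=:\ell$; that is, the sequence of normalized roots converges.

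At this stage the hypotheses of Theorem~\ref{thm:inclusion} are exactly met — an injective sequence of positive roots whose normalizations converge — so part~(i) yields $\|\rho_n\|\to\infty$ for every norm on $V$. But $\rho_n\to v$ forces $\|\rho_n\|\to\|v\|<\infty$, a contradiction. Hence $\Phi$ has no accumulation point in $V$, which is the asserted discreteness.

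The step I expect to be the delicate one is verifying that the normalized roots actually converge, since Theorem~\ref{thm:inclusion} is stated only under that hypothesis: a priori the coordinate-sums $|\rho_n|_1$ could tend to $0$, making the normalization blow up. The crux is therefore the inequality $|v|_1>0$, which is precisely where the closedness of $\cone(\DD)$, the positive independence condition, and the invariance of $q$ (guaranteeing $v\neq 0$) are combined. Everything else is routine; in particular no comparison between $|\cdot|_1$ and an honest norm is required, because both the continuity of the norm and the conclusion of Theorem~\ref{thm:inclusion}(i) hold for \emph{any} choice of norm on $V$.
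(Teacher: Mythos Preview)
Your proof is correct and follows essentially the same contradiction scheme as the paper, with two small differences worth noting. First, the paper invokes part~(ii) of Theorem~\ref{thm:inclusion} (the limit $\h\rho$ would lie in $Q$, contradicting $q(\rho)=1$) rather than part~(i); either half of the theorem yields the contradiction immediately once one knows $\h{\rho_n}$ converges. Second, the paper assumes from the outset that the putative accumulation point is itself a positive root (which suffices for discreteness in the sense ``every point of $\Phi$ is isolated''), so continuity of the normalization at the limit is automatic; you instead allow an arbitrary accumulation point $v\in V$ and carefully justify $|v|_1>0$ via $q(v)=1$ and $\cone(\DD)\cap V_0=\{0\}$, thereby proving the slightly stronger statement that $\Phi$ has no accumulation point in $V$ at all.
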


\begin{proof}
Suppose $\rho_n$ is an injective sequence converging to $\rho \in \pp$. Then $\h{\rho_n}$ converges to $\h{\rho}$, so by
Theorem~\ref{thm:inclusion}, $\h{\rho} \in Q$. Therefore $q(\h{\rho})=0$ which gives a contradiction since $q(\rho)=1$.
\end{proof}

The remainder of this subsection is devoted to the proof of Theorem
\ref{thm:inclusion}. We first need to recall the notion of depth of a
root. The depth of a positive root is a natural way to measure the
``complexity'' of this root in regard to the simple root it is
obtained from (see \cite[\S4]{Sa91} or  \cite[\S4.6]{bjorner-brenti} for details): for
$\rho \in \pp$,
\[
\dep(\rho)= 1 + \min\{k ~|~ \rho=s_{\alpha_1} s_{\alpha_2}\dots s_{\alpha_k}(\alpha_{k+1}), \text{ for } \alpha_1,\dot's, \alpha_k, \alpha_{k+1} \in \Delta\}.
\]
The depth is a very useful tool that allows inductive proof in infinite root systems: if~$\gamma$ is a root of depth $r\geq 2$,
then there is a root $\gamma'$ of depth $r-1$ and a simple root~${\alpha\in \Delta}$ such that $\gamma=s_\alpha(\gamma')$,
and moreover $B(\alpha,\gamma')<0$, see \cite[Lemma~4.6.2]{bjorner-brenti}.

Since $S$ is assumed to be finite, it follows that the number of positive roots of bounded depth is finite. Consider an injective sequence~$(\rho_n)_{n \in \mathbb{N}}$ of
positive roots, as in Theorem~\ref{thm:inclusion}. Then we obtain easily that $\dep(\rho_n)$ diverges to infinity as~$n\to \oo$. So, to prove
the first item of the theorem, it is sufficient to show that when the depth of a sequence of roots tends to infinity, so does the norm of the roots. This is done using the following lemma, which clarifies the relation between norm and depth.

\begin{lemma}
  \label{prop:dpnorm}
  Let $(\Phi,\DD)$ be a based root system, as defined in \S\ref{subsec:georep}. We take for the norm~$||.||$ the Euclidean
  norm for which $\Delta$ is an orthonormal basis for $V$.

  Then, with the above notations, we have the following properties:
  \begin{enumerate}[(i)]
  \item $\exists \kappa >0,\ \forall \alpha\in \Delta,\ \forall \rho \in \pp,\ B(\alpha,\rho)\neq 0 \ \Rightarrow \ |B(\alpha, \rho)|\geq \kappa$;

  \item $\exists \lambda >0, \ \forall \rho \in \pp,\ ||\rho||^2 \geq 1+\lambda (\dep (\rho) -1)$.
  \end{enumerate}
\end{lemma}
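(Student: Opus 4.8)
The plan is to prove the two statements separately, and to extract the constant $\lambda$ in (ii) from the constant $\kappa$ in (i). For part (i), I would invoke Proposition~\ref{prop:dihedral}~(i), which asserts that the set $\{B(\alpha,\rho)\mid \alpha\in\Delta,\ \rho\in\pp,\ |B(\alpha,\rho)|<1\}$ is finite. Since all the values $B(\alpha,\rho)$ with $|B(\alpha,\rho)|\geq 1$ are already bounded away from $0$ (by $1$), the only way for $|B(\alpha,\rho)|$ to get arbitrarily close to $0$ without being equal to $0$ would be through values in the open interval $(-1,1)$; but there are only finitely many such values. Hence the set of \emph{nonzero} attained values of $|B(\alpha,\rho)|$ has a strictly positive infimum, which is actually attained (the set is finite below $1$), and I can take $\kappa$ to be this minimum, noting $\kappa\leq 1$. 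This settles (i).

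For part (ii), the natural approach is induction on $\dep(\rho)$, using the recursive structure recalled just before the lemma: if $\dep(\rho)=r\geq 2$, there is a simple root $\alpha\in\Delta$ and a root $\gamma'$ with $\dep(\gamma')=r-1$, $\rho=s_\alpha(\gamma')$, and $B(\alpha,\gamma')<0$. The base case $r=1$ is immediate since $\rho\in\Delta$ gives $\|\rho\|^2=1$. For the inductive step, I would compute, using the chosen orthonormal basis $\Delta$ and the definition of $s_\alpha$,
\[
\|\rho\|^2=\|s_\alpha(\gamma')\|^2=\bigl\|\gamma'-2B(\alpha,\gamma')\,\alpha\bigr\|^2
=\|\gamma'\|^2-4B(\alpha,\gamma')\langle\alpha,\gamma'\rangle+4B(\alpha,\gamma')^2,
\]
where $\langle\cdot,\cdot\rangle$ denotes the Euclidean inner product for which $\Delta$ is orthonormal (so $\|\alpha\|=1$). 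The key point is that this expression should exceed $\|\gamma'\|^2$ by at least a fixed positive amount. Since $B(\alpha,\gamma')<0$ and $B(\alpha,\gamma')\neq 0$, part (i) gives $B(\alpha,\gamma')\leq-\kappa$, hence $4B(\alpha,\gamma')^2\geq 4\kappa^2$, and this term is positive and bounded below. The obstacle is the cross term $-4B(\alpha,\gamma')\langle\alpha,\gamma'\rangle$: I must control the sign of the Euclidean inner product $\langle\alpha,\gamma'\rangle$, which is \emph{not} the bilinear form $B$ and need not be positive.

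The main difficulty is therefore comparing the Euclidean quantity $\langle\alpha,\gamma'\rangle$ against the $B$-value $B(\alpha,\gamma')$. I expect the cleanest route is to bound the cross term by relating the two pairings. Writing $\gamma'=\sum_{\beta\in\Delta}c_\beta\beta$ with all $c_\beta\geq 0$ (as $\gamma'\in\pp\subseteq\cone(\Delta)$), one has $\langle\alpha,\gamma'\rangle=c_\alpha\geq 0$ by orthonormality, while $B(\alpha,\gamma')=c_\alpha+\sum_{\beta\neq\alpha}c_\beta B(\alpha,\beta)$ with every $B(\alpha,\beta)\leq 0$ for $\beta\neq\alpha$; thus $\langle\alpha,\gamma'\rangle=c_\alpha\geq B(\alpha,\gamma')$. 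Since $B(\alpha,\gamma')<0$, the factor $-4B(\alpha,\gamma')>0$ multiplies $\langle\alpha,\gamma'\rangle\geq 0$, so the cross term is nonnegative and can simply be dropped. This yields
\[
\|\rho\|^2\geq\|\gamma'\|^2+4B(\alpha,\gamma')^2\geq\|\gamma'\|^2+4\kappa^2.
\]
Setting $\lambda:=4\kappa^2>0$ and applying the inductive hypothesis $\|\gamma'\|^2\geq 1+\lambda(r-2)$ gives $\|\rho\|^2\geq 1+\lambda(r-1)$, completing the induction and hence the proof. (Should the inequality $\langle\alpha,\gamma'\rangle\geq B(\alpha,\gamma')$ require care about the diagonal term, the fact $B(\alpha,\alpha)=1=\langle\alpha,\alpha\rangle$ makes the comparison exact on the $\alpha$-coordinate, and the off-diagonal $B(\alpha,\beta)\leq 0$ only helps.)
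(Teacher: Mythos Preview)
Your proof is correct and follows essentially the same approach as the paper's: part (i) is deduced from Proposition~\ref{prop:dihedral}~(i), and part (ii) is proved by induction on depth with $\lambda=4\kappa^2$, using the same computation of $\|s_\alpha(\gamma')\|^2$ and the observation that the cross term $-4B(\alpha,\gamma')\langle\alpha,\gamma'\rangle$ is nonnegative because $\langle\alpha,\gamma'\rangle\geq 0$ (since $\gamma'\in\cone(\Delta)$ and $\Delta$ is orthonormal) and $B(\alpha,\gamma')<0$. Your extra remark that $\langle\alpha,\gamma'\rangle\geq B(\alpha,\gamma')$ is true but unnecessary; only $\langle\alpha,\gamma'\rangle\geq 0$ is used.
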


\begin{proof}
The first point is a direct consequence of
Proposition~\ref{prop:dihedral}~(i). Let us now prove, by
induction on $\dep(\rho)$, that
\[
\forall \rho \in \pp,\ ||\rho||^2 \geq     1+\lambda (\dep (\rho)-1),
\]
where $\lambda = 4\kappa ^2$ with $\kappa$ given by~(i). If
$\dep(\rho)=1$, $\rho \in \DD$ so $||\rho||=1 = 1+\lambda
(\dep(\rho)-1)$ by the choice of the norm $||\cdot||$. If
$\dep(\rho)=r \geq 2$, then we can write $\rho = s_{\alpha}(\rho')$,
with $\rho'\in \pp$ and $\alpha \in \DD$ such that $\dep(\rho')= r-1$
and $B(\alpha,\rho')<0$, by~\cite[Lemma~4.6.2]{bjorner-brenti}. We get
\begin{align*}
||\rho||^2 & = || \rho' - 2B(\alpha,\rho') \alpha ||^2 && \\
& = ||\rho'||^2 + 4B(\alpha,\rho')^2 - 4B(\alpha,\rho') \left< \alpha, \rho' \right>,&&
\end{align*}
\noindent where $\left< \cdot, \cdot\right>$ is the Euclidean product
of $||\cdot||$. But we know that $B(\alpha,\rho')<0$, and $\left<
\alpha, \rho' \right> \geq 0$ since $\rho'\in \cone (\Delta)$ and
$\Delta$ is an orthonormal basis for $||\cdot ||$. So we obtain by
induction hypothesis on $\rho'$ and by~(i):
\[
||\rho||^2 \geq ||\rho'||^2 + 4B(\alpha,\rho')^2 \geq 1+(r-2)\lambda + 4\kappa^2.
\]
Since $\lambda = 4\kappa^2$, we have $ ||\rho||^2   \geq 1 + (r-2)\lambda + \lambda  = 1 + (r-1)\lambda, $ which concludes the proof of~(ii).
\end{proof}

We can now finish the proof of Theorem~\ref{thm:inclusion}.

\begin{proof}[Proof of Theorem~\ref{thm:inclusion}]
As explained before Lemma~\ref{prop:dpnorm}, and by~(ii) of
this same lemma, the norm~$||\rho_n||$ of any injective sequence
$(\rho_n)_{n\in \mathbb{N}}$ in $\pp$ tends to infinity.

\noindent Recall from \S\ref{subsec:ex} that for $v \in V_0^+$, $\h{v}=\frac{v}{|v|_1}$, where $|v|_1=\sum_{\alpha\in \Delta}
v_{\alpha}$. If $\rho$ belongs to~$\pp$, the coordinates $\rho_{\alpha}$ are nonnegative, so $|\rho|_1$ is the $L_1$-norm
of $\rho$. In particular, by equivalence of the norms, $|\rho_n|_1$ tends to infinity as $||\rho_n||$ does.  We get
\[
q(\h{\rho_n})= q\left(\frac{\rho_n}{|\rho_n|_1}\right) = \frac{q(\rho_n)}{(|\rho_n|_1)^2} = \frac{1}{(|\rho_n|_1)^2} \ \xrightarrow[n\to \oo]{}\ 0.
\]
\noindent Supppose now that $\h{\rho_n}$ tends to a limit $\ell$. Then we obtain $q(\ell)=0$, i.e., $\ell \in Q$, which
concludes the proof of Theorem~\ref{thm:inclusion}.
\end{proof}

>From Theorem~\ref{thm:inclusion} and its proof, we also get these easy
consequences.

\begin{coro} The two following statements hold:
\begin{enumerate}[(i)]
\item for any $M\geq0$, the set $\{\rho \in \Phi ~|~ ||\rho|| \leq M\}$ is finite;

\item for any $\eps >0$, the set $\{\rho \in \Phi ~|~ \dist(\h{\rho},\h{Q}) \geq \eps \}$ is finite.
\end{enumerate}
\end{coro}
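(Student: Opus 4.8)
The plan is to derive both statements directly from the material already assembled for Theorem~\ref{thm:inclusion}, with no new estimate required: part~(i) rests on the depth bound of Lemma~\ref{prop:dpnorm}, and part~(ii) on a compactness argument combined with the theorem itself.

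For~(i), I would first handle the Euclidean norm $\|\cdot\|$ of Lemma~\ref{prop:dpnorm} (the one for which $\Delta$ is orthonormal), then pass to an arbitrary norm by equivalence of norms on the finite-dimensional space $V$. By Lemma~\ref{prop:dpnorm}~(ii) every $\rho\in\pp$ satisfies $\|\rho\|^2\geq 1+\lambda(\dep(\rho)-1)$ with $\lambda>0$ fixed. Hence $\|\rho\|\leq M$ forces $\dep(\rho)\leq 1+(M^2-1)/\lambda$, so the depth is bounded on the set in question. As recalled just before Lemma~\ref{prop:dpnorm}, the finiteness of $S$ implies that there are only finitely many positive roots of bounded depth; since $\Phi=\pp\sqcup(-\pp)$ and $\|\rho\|=\|-\rho\|$, the full set $\{\rho\in\Phi\mid \|\rho\|\leq M\}$ is finite. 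For an arbitrary norm $N$ on $V$, equivalence gives a constant $C>0$ with $\|\cdot\|\leq C\,N(\cdot)$, so $\{\rho\mid N(\rho)\leq M\}\subseteq\{\rho\mid \|\rho\|\leq CM\}$, finite by the case just treated.

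For~(ii), I would argue by contradiction. Suppose that for some $\eps>0$ the set $\{\rho\in\Phi\mid \dist(\h\rho,\h Q)\geq\eps\}$ is infinite, and extract from it an injective sequence $(\rho_n)$. Every normalized root lies in the compact set $\conv(\Delta)$ (Remark~\ref{rem:projective}), so $(\h{\rho_n})$ admits a subsequence $\h{\rho_{n_k}}\to\ell$. By Theorem~\ref{thm:inclusion}~(ii) the limit satisfies $\ell\in\h Q$. Since the map $x\mapsto\dist(x,\h Q)$ is $1$-Lipschitz, hence continuous, we obtain $\dist(\h{\rho_{n_k}},\h Q)\to\dist(\ell,\h Q)=0$, contradicting $\dist(\h{\rho_{n_k}},\h Q)\geq\eps$ for all $k$.

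The argument is essentially routine, and the only genuine ingredient beyond what is already proved is the compactness of $\conv(\Delta)$ noted in Remark~\ref{rem:projective}: it is what supplies the convergent subsequence in~(ii), after which Theorem~\ref{thm:inclusion}~(ii) places the limit in $\h Q$ and the contradiction is immediate, since $\ell\in\h Q$ yields $\dist(\ell,\h Q)=0$ with no further appeal to closedness of $\h Q$. Thus the main (and only nontrivial) obstacle is simply to invoke the right compactness and the theorem; everything else is the depth bound of Lemma~\ref{prop:dpnorm}~(ii) and the finiteness of $S$.
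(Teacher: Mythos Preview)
Your proof is correct and follows exactly the route the paper indicates: the corollary is stated without proof, merely as ``easy consequences'' of Theorem~\ref{thm:inclusion} and its proof, and your argument makes those consequences explicit in the expected way---the depth bound of Lemma~\ref{prop:dpnorm}(ii) for~(i), and compactness of $\conv(\Delta)$ together with Theorem~\ref{thm:inclusion}(ii) for~(ii).
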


\begin{defi} \label{def:E}
Let $(\Phi, \Delta)$ be a based root system in $(V,B)$, and suppose that~$\Delta$ is a basis for $V$. We denote by
$E(\Phi)$ (or simply $E$ when there is no possible confusion) the set of accumulation points (or limit points) of
$\h{\Phi}$, i.e., the set consisting of all the possible limits of injective sequences of normalized roots.
\end{defi}

Sometimes, we refer to the points of $E$ as \emph{limit roots} of the root system $(\Phi, \Delta)$.

\begin{remark}
As stated in Remark~\ref{rem:projective}, we could have studied the roots in the projective space:
\[
\mathbb P\Phi:=\{\mathbb R\alpha\,|\, \alpha\in\Phi \}=\{\mathbb R\alpha\,|\, \alpha\in\Phi^+\}\subseteq \mathbb PV
\]
\noindent with the quotient topology. However, we choose not to for two reasons: the pictures at the base of this article
were obtained in an affine hyperplane and we wanted to explain precisely what we see; but also, since we do not use
projective geometry technics, it seemed natural to state our result in the simplest way possible. In this context, the
accumulation set $E(\Phi)$ of $\h{\Phi}$ is the accumulation set of $\mathbb P\Phi$, which is (since $\Phi$ is discrete) equal to:
$ \overline{\mathbb P \Phi}\setminus \mathbb P \Phi = \{ \mathbb Rx\,|\, x\in E(\Phi) \} $ (where $\overline{\mathbb P \Phi}$
denotes the topological closure of $\mathbb P \Phi$).
\end{remark}

As $\h{\Phi}$ is included in the simplex $\conv(\DD)$ (which is closed), Theorem \ref{thm:inclusion} implies \[ E(\Phi) \subseteq
Q \cap \conv(\DD)= \h{Q}\cap \cone(\DD)\  .\]

\noindent The reverse inclusion is not always true: we saw some examples of this fact in~\S\ref{subsec:ex}, for $\rk(W)\geq 4$, or even
for $\rk(W)=3$ whenever some $B(\alpha,\beta)<-1$. We address a more precise description of $E(\Phi)$ in~\S\ref{subsec:fractal}. 

Since $\Delta$ is finite, the convex set $\conv(\Delta)$ is compact. Moreover, if $\Phi$ is infinite, then $\h\Phi$ is infinite as well.
Therefore the inclusion $\h \Phi \subseteq \conv(\Delta)$ implies the following statement.

\begin{prop}\label{prop:nonempty} Let $(\Phi, \Delta)$ be a based root system in $(V,B)$, and suppose that~$\Delta$ is a basis for $V$. Then $E(\Phi)\not = \varnothing$ whenever  $\Phi$ is infinite.
\end{prop}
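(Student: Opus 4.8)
The plan is to exploit the compactness of $\conv(\Delta)$ together with the infiniteness of $\h\Phi$, and then apply the Bolzano--Weierstrass theorem to extract a convergent subsequence whose limit is, by definition, a limit root. First I would observe that since $\Delta$ is a basis for $V$, the normalization map from \S\ref{subsec:ex} is well-defined on all of $\Phi$, and by Remark~\ref{rem:projective} the image $\h\Phi = \{\h\rho \mid \rho \in \Phi^+\}$ is contained in the $n$-simplex $\conv(\Delta)$, which is closed and bounded, hence compact.

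Next I would argue that $\h\Phi$ is an infinite set. Because $\mathbb R\rho \cap \Phi = \{\rho, -\rho\}$ for each $\rho \in \Phi$, the normalization $\rho \mapsto \h\rho$ induces a bijection between $\Phi^+$ and $\h\Phi$ (as noted in \S\ref{subsec:ex}). Therefore, if $\Phi$ is infinite, then $\Phi^+$ is infinite, and hence $\h\Phi$ is infinite as well.

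With an infinite subset $\h\Phi$ of the compact space $\conv(\Delta)$ in hand, the Bolzano--Weierstrass theorem guarantees that $\h\Phi$ has at least one accumulation point in $\conv(\Delta)$. Concretely, I would choose an injective sequence $(\rho_n)_{n\in\mathbb N}$ of distinct positive roots (possible since $\Phi^+$ is infinite); then $(\h{\rho_n})$ is a sequence in the compact set $\conv(\Delta)$, so it admits a convergent subsequence whose limit $\ell$ lies in $\conv(\Delta)$. By Definition~\ref{def:E}, $\ell \in E(\Phi)$, so $E(\Phi) \neq \varnothing$.

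This argument is essentially routine once the setup is in place; there is no serious obstacle. The only point requiring a moment of care is ensuring that the subsequence extracted from $(\h{\rho_n})$ corresponds to an \emph{injective} sequence of roots, so that its limit genuinely qualifies as an accumulation point of $\h\Phi$ rather than a point attained by the sequence itself; this is immediate because the original sequence $(\rho_n)$ is injective and passing to a subsequence preserves injectivity. Note also that this proposition is logically independent of Theorem~\ref{thm:inclusion}: it only uses compactness and infiniteness, not the location of the limit points in $\h Q$ (though of course Theorem~\ref{thm:inclusion} tells us the nonempty set $E(\Phi)$ actually sits inside $\h Q \cap \conv(\Delta)$).
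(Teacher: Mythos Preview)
Your proposal is correct and takes essentially the same approach as the paper: the paper's proof (given in the sentence immediately preceding the proposition) is simply that $\conv(\Delta)$ is compact, $\h\Phi$ is infinite when $\Phi$ is, and the inclusion $\h\Phi \subseteq \conv(\Delta)$ yields the result. You have merely spelled out the Bolzano--Weierstrass step and the injectivity check in more detail than the paper does.
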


%   Subsection : Simple examples : rank $2$, reducible groups, and affine groups
%
%%%%%%%%%%%%%%%%%%%%%%%%%%%%%%%%%%%%%%%%%%%%%%%%%%%%%%%%%%%%%%

\subsection{Simple examples : rank $2$, reducible groups, and affine groups}  \label{subsec:examplesE}
~

When $(\Phi,\Delta)$ is an infinite based root system of rank $2$ (with $\Delta=\{\alpha,\beta\}$), we have already explained
that $E(\Phi)$ consists of one or two points (according to whether $\Phi$ is affine or not, i.e., whether $B(\alpha,\beta)=-1$ or
$<-1$): see Example \ref{ex:rk2}. In any case, we get $E(\Phi)=\h Q$.

For any based root system $(\Phi,\Delta)$, the limit roots coming from
a given rank~$2$ reflection subgroup can be observed inside
$E(\Phi)$. Take two distinct positive roots~$\rho_1$ and~$\rho_{2}$,
denote by $W'$ the dihedral reflection subgroup of $W$ generated by
the two reflections $s_{\rho_1}$ and $s_{\rho_2}$, and consider the
associated rank~$2$ root subsystem~${(\Phi',\Delta')}$ as in
Proposition~\ref{prop:dihedral}. Denote by $E(\Phi')$ the points of
$E$ that are limits of normalized roots of $\Phi'$, and by
$L(\h{\rho_{1}},\h{\rho_{2}})$ the line passing through $\h{\rho_{1}}$
and $\h{\rho_{2}}$. Then, by construction (and using
Proposition~\ref{prop:dihedral}), we have the following properties:

\begin{itemize}
\item $E(\Phi')=Q\cap L(\h{\rho_{1}},\h{\rho_{2}})= E(\Phi)\cap
  L(\h{\rho_{1}},\h{\rho_{2}})$;
\item the cardinality of $E(\Phi')$ is $0$, $1$ or $2$, depending on whether $|B(\rho_{1},\rho_{2})|<1$, $|B(\rho_{1},\rho_{2})|=1$
or $|B(\rho_{1},\rho_{2})|>1$.
\end{itemize}

Another case where the equality $E(\Phi)=\h Q$ holds is when $\Phi$ is
affine. To see that, we first need to discuss the reduction to
irreducible root systems. Recall that a Coxeter system $(W,S)$ is {\em
  irreducible} if there is no proper partition~$S=I\sqcup J$ such that
any element of $I$ commutes with any element of $J$, i.e., the Coxeter
graph is connected. Similarly, a based root system $(\Phi,\Delta)$ is
called {\em irreducible} if its associated Coxeter group is
irreducible, i.e., if there is no proper partition
${{\Delta=\Delta_I\sqcup \Delta_J}}$ such that $B(\alpha,\beta)=0$ for
all $\alpha\in \Delta_I$ and $\beta\in \Delta_J$.

Now, for any subset $\Delta_I$ of $\Delta$, we can associate a \emph{(standard) parabolic root subsystem} $(\Phi_I,\Delta_I)$
by setting:
\[
W_I:=\left< s_{\alpha} \, | \, \alpha \in \Delta_I\right> \quad \text{and} \quad \Phi_I:=W_I(\Delta_I).
\]
It is then natural to define the limit roots for this root subsystem
as the subset~$E(\Phi_I)$ of $E(\Phi)$ whose elements are limits of
sequences in $\h{\Phi_I}$. We postpone until \S\ref{sec:subgps} the
discussion on why this definition and its analogue for more general root
subsystems (not necessarily standard parabolic) make sense. Now we explain
 in the following easy proposition why it is possible to limit our study to irreducible root systems.

\begin{prop}
\label{prop:irred}
Assume that $(\Phi,\Delta)$ is reducible, and consider proper subsets $\Delta_{I},\Delta_J\subset \Delta$ such that
$\Delta=\Delta_I\sqcup \Delta_J$, with $B(\alpha,\beta)=0$ for all $\alpha\in \Delta_I$ and~$\beta\in \Delta_J$. Then
$E(\Phi)=E(\Phi_I)\sqcup E(\Phi_J)$.
\end{prop}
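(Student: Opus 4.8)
The plan is to reduce the statement to the elementary structural fact that the root system of a reducible Coxeter group is the disjoint union of the root systems of its two factors, and that this decomposition is compatible both with the normalization map and with the face structure of the simplex $\conv(\Delta)$. Once that is in place, the proposition follows from a pigeonhole argument on injective sequences together with the disjointness of two faces of $\conv(\Delta)$.

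First I would set $V_I:=\Span(\Delta_I)$ and $V_J:=\Span(\Delta_J)$, so that $V=V_I\oplus V_J$ since $\Delta$ is a basis. The hypothesis $B(\alpha,\beta)=0$ for $\alpha\in\Delta_I$, $\beta\in\Delta_J$ shows, via formula~\eqref{eq:reflection}, that each $s_\beta$ with $\beta\in\Delta_J$ fixes $V_I$ pointwise, and symmetrically each $s_\alpha$ with $\alpha\in\Delta_I$ fixes $V_J$ pointwise. Hence $W_J:=\langle s_\beta\mid\beta\in\Delta_J\rangle$ acts trivially on $V_I$ and $W_I$ acts trivially on $V_J$, and $W=W_IW_J$ with $W_I$, $W_J$ commuting. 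I would then prove $\Phi=\Phi_I\sqcup\Phi_J$: writing an arbitrary $w\in W$ as $w=w_Iw_J$, a root $w(\alpha)$ with $\alpha\in\Delta_I$ equals $w_I(\alpha)\in\Phi_I\subseteq V_I$ because $w_J$ fixes $\alpha$, and symmetrically for $\alpha\in\Delta_J$; the reverse inclusion is clear, and the union is disjoint because $\Phi_I\subseteq V_I\setminus\{0\}$, $\Phi_J\subseteq V_J\setminus\{0\}$ while $V_I\cap V_J=\{0\}$. Intersecting with $\cone(\Delta)$ and noting that a vector of $V_I$ lies in $\cone(\Delta)$ if and only if it lies in $\cone(\Delta_I)$, I obtain $\Phi^+=\Phi_I^+\sqcup\Phi_J^+$.

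Next I would transport this to the normalized picture. For $\rho\in\Phi_I^+$ only the $\Delta_I$-coordinates of $\rho$ are nonzero, so $|\rho|_1=\sum_{\alpha\in\Delta_I}\rho_\alpha$ and $\h\rho\in\conv(\Delta_I)$; thus $\h{\Phi_I}\subseteq\conv(\Delta_I)$ and $\h{\Phi_J}\subseteq\conv(\Delta_J)$, which are two faces of the simplex $\conv(\Delta)$ sharing no vertex. These faces are disjoint: a common point would have all $\Delta$-coordinates equal to zero, contradicting $|v|_1=1$. Since $\conv(\Delta_I)$ and $\conv(\Delta_J)$ are compact (hence closed), every limit point of $\h{\Phi_I}$ lies in $\conv(\Delta_I)$ and every limit point of $\h{\Phi_J}$ lies in $\conv(\Delta_J)$, so in particular $E(\Phi_I)\cap E(\Phi_J)=\varnothing$; and by definition $E(\Phi_I),E(\Phi_J)\subseteq E(\Phi)$.

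Finally I would establish the reverse inclusion $E(\Phi)\subseteq E(\Phi_I)\cup E(\Phi_J)$. Given $\ell\in E(\Phi)$, realized as the limit of an injective sequence $(\h{\rho_n})$ with $\rho_n\in\Phi^+$, the partition $\Phi^+=\Phi_I^+\sqcup\Phi_J^+$ forces infinitely many of the $\rho_n$ to lie in one part, say $\Phi_I^+$. The corresponding subsequence consists of pairwise distinct roots, hence of pairwise distinct normalized roots (as $\Phi^+\to\h{\Phi^+}$ is a bijection), and it still converges to $\ell$; therefore $\ell\in E(\Phi_I)$. Combining the two inclusions with the disjointness above yields $E(\Phi)=E(\Phi_I)\sqcup E(\Phi_J)$. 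The only genuinely structural point is the clean splitting $\Phi=\Phi_I\sqcup\Phi_J$ of the root system, which I expect to be the main obstacle to state carefully; everything after it is pigeonhole combined with the disjointness of the faces $\conv(\Delta_I)$ and $\conv(\Delta_J)$.
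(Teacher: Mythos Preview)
Your proposal is correct and follows essentially the same approach as the paper: decompose $\widehat{\Phi}=\widehat{\Phi_I}\cup\widehat{\Phi_J}$, observe that $\widehat{\Phi_I}\subseteq\conv(\Delta_I)$ and $\widehat{\Phi_J}\subseteq\conv(\Delta_J)$ are contained in disjoint compact sets, and conclude that any convergent sequence of normalized roots yields a limit in one of $E(\Phi_I)$, $E(\Phi_J)$. The paper simply asserts $\widehat{\Phi}=\widehat{\Phi_I}\cup\widehat{\Phi_J}$ and the disjointness of the two faces as known, whereas you spell out the proof of $\Phi=\Phi_I\sqcup\Phi_J$ via the commuting action of $W_I$ and $W_J$; your pigeonhole extraction of a subsequence is a minor rephrasing of the paper's observation that the sequence eventually lives entirely in one face.
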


\begin{proof}
We have $\widehat{\Phi}=\widehat{\Phi_I}\cup\widehat{\Phi_J}$. Since
$\widehat{\Phi_I}\subseteq \conv(\Delta_I)$ (resp.
$\widehat{\Phi_J}\subseteq \conv(\Delta_J)$) and since
$\conv(\Delta_I)$ and $\conv(\Delta_J)$ are disjoint compact sets, a
converging sequence of elements in $\widehat{\Phi}$ eventually lives
either in $\conv(\Delta_I)$ or in $\conv(\Delta_J)$, and so converges
to a limit point either in $E(\Phi_I)$ or in $E(\Phi_J)$.
\end{proof}

\begin{coro} \label{rem:affine}
If $(\Phi,\Delta)$ is an affine based root system, then $E(\Phi)$ is finite. Moreover, if $(\Phi,\Delta)$ is affine and irreducible, then
$E(\Phi)$ is a singleton (and is equal to $\h Q$).
\end{coro}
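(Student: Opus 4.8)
The plan is to reduce to the irreducible case and there pin down $\h Q$ exactly, the point being that for affine forms the isotropic cone degenerates into a linear subspace that meets $V_1$ in a single point.

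\textbf{Reduction to the irreducible case.} First I would decompose $(\Phi,\Delta)$ into its irreducible components by iterating Proposition~\ref{prop:irred}: writing $\Delta=\Delta_{I_1}\sqcup\dots\sqcup\Delta_{I_k}$ with pairwise $B$-orthogonal blocks gives $E(\Phi)=\bigsqcup_j E(\Phi_{I_j})$. Since $B$ is positive semidefinite on $V$, its restriction to each $\Span(\Delta_{I_j})$ is again positive semidefinite, so each component is either positive definite (whence $\Phi_{I_j}$ is finite and $E(\Phi_{I_j})=\varnothing$) or positive semidefinite non-definite, i.e.\ irreducible affine. Thus finiteness of $E(\Phi)$ follows at once from the irreducible case, and it suffices to treat an irreducible affine $(\Phi,\Delta)$.

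\textbf{The isotropic cone is the radical.} The first key observation is that when $B$ is positive semidefinite, $Q$ coincides with the radical $\Vp=\{v\in V\mid B(v,w)=0\text{ for all }w\}$. Indeed $\Vp\subseteq Q$ is clear, and conversely if $B(v,v)=0$ then for every $w$ the function $t\mapsto B(v+tw,v+tw)=2tB(v,w)+t^2B(w,w)$ is nonnegative for all $t\in\mathbb R$, which forces $B(v,w)=0$ (examine the discriminant when $B(w,w)>0$, and use linearity in $t$ when $B(w,w)=0$). Hence $Q=\Vp$ is a linear subspace, and $\h Q=\Vp\cap V_1$.

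\textbf{The radical is a positive line.} The heart of the argument is to show that for an irreducible affine form $\Vp$ is one-dimensional and spanned by a vector with strictly positive $\Delta$-coordinates. I would argue via Perron--Frobenius applied to the Gram matrix $G=(B(\alpha_s,\alpha_t))$: the matrix $C=I-G$ is symmetric, entrywise nonnegative (since $B(\alpha_s,\alpha_t)\leq 0$ off the diagonal and $B(\alpha_s,\alpha_s)=1$ by Definition~\ref{def:root}), and irreducible because the Coxeter graph is connected. Positive semidefiniteness of $G$ bounds the spectral radius of $C$ by $1$, while singularity of $G$ forces it to equal $1$; by Perron--Frobenius this top eigenvalue is simple with a strictly positive eigenvector. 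Translating back, $\ker G=\Vp$ is one-dimensional, spanned by the null root $\delta=\sum_s u_s\alpha_s$ with all $u_s>0$. This is the step I expect to be the main obstacle, as it is where the special structure of affine systems genuinely enters; alternatively one may simply invoke the classical structure theory of irreducible affine Coxeter groups, the representation being forced to the classical one by Example~\ref{ex:affine}.

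\textbf{Conclusion.} Because $\delta$ has positive coordinates, $|\delta|_1=\sum_s u_s>0$, so $\delta\notin V_0$ and the line $\Vp=\mathbb R\delta$ meets the affine hyperplane $V_1$ in exactly one point $\h\delta$. Therefore $\h Q=\Vp\cap V_1=\{\h\delta\}$ is a singleton. Since an affine Coxeter group is infinite, $\Phi$ is infinite, so $E(\Phi)\neq\varnothing$ by Proposition~\ref{prop:nonempty}; combined with the inclusion $E(\Phi)\subseteq\h Q$ from Theorem~\ref{thm:inclusion}, this gives $E(\Phi)=\h Q=\{\h\delta\}$, which finishes the irreducible case and hence the corollary.
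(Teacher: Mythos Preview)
Your proof is correct and follows the same strategy as the paper's three-line argument: reduce to the irreducible case via Proposition~\ref{prop:irred}, observe that $\h Q$ is a singleton from the signature $(n-1,0)$ of $B$, and conclude using $E\neq\varnothing$ (Proposition~\ref{prop:nonempty}) together with $E\subseteq\h Q$ (Theorem~\ref{thm:inclusion}). Your Perron--Frobenius step supplies detail the paper leaves implicit---namely, why the one-dimensional radical actually meets $V_1$ (equivalently, why the null root has strictly positive coordinates); the paper itself appeals to Perron--Frobenius only later, for the related Proposition~\ref{prop:noninter}.
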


\begin{proof}
By definition, if $(\Phi,\Delta)$ is an irreducible and affine based root system, of rank $n$, then the signature of $B$ is $(n-1,0)$.
So $\h{Q}$ is a point in this case, and since~$\Phi$ is infinite, $E\neq \varnothing$ by Proposition~\ref{prop:nonempty}. Since $E\subseteq \h{Q}$, we get that
$E=\h{Q}$ and contains a single element. The proof follows then from Proposition~\ref{prop:irred}.
\end{proof}

%%%%%%%%%%%%%%%%%%%%%%%%%%%%%%%%%%%%%%%%%%%%%%%%%%%%%%%%%%%%%%
%
%   SECTION : GEOMETRIC ACTION OF THE COXETER GROUP W ON THE LIMIT ROOTS
%
%%%%%%%%%%%%%%%%%%%%%%%%%%%%%%%%%%%%%%%%%%%%%%%%%%%%%%%%%%%%%%

\section{Geometric action of the Coxeter group $W$ on the limit roots} \label{sec:action}

In this section, we address some natural questions concerning the
pictures we obtained: can $W$ act on the set $E$ of limit roots? Where
does the ``fractal phenomenon'' come from? As in the previous section,
we fix an infinite based root system~${(\Phi,\DD)}$ in $(V,B)$ with
associated Coxeter system $(W,S)$, and we assume that $\Delta$ is a
basis for $V$.  We recall that $V_1$ denotes the affine hyperplane
containing the simple roots (seen as points), with direction $V_0$,
and $E=E(\Phi)$ the set of limit roots, i.e., the accumulation set of
the normalized roots.

%   Subsection : Geometric action
%
%%%%%%%%%%%%%%%%%%%%%%%%%%%%%%%%%%%%%%%%%%%%%%%%%%%%%%%%%%%%%%

\subsection{Geometric action} \label{subsec:action}
~

It is clear that the geometric action of the group $W$ on $V$ does not restrict to an action on $V_1$. However, using the
normalization map, it induces a natural action on a part of $V_1$.  We consider the set

\begin{align*}
D&:= \bigcap_{w\in W} w(V\setminus V_0) \cap V_1\\
&\phantom{:}= V_1\setminus\bigcup_{w\in W} w(V_0) .
\end{align*}

We define an action of $W$ on $D$ as follows: for $x\in D$ and $w\in W$, set
\[
w\cdot x := \h{w(x)}\ .
\]

Since any element of $D$ stays in the complement of $V_0$ after action of any $w\in W$,
it is straightforward to check that this is a well-defined action of
$W$, and that any $w\in W$ acts continuously on $D$. The set $D$ is
the complement in $V_1$ of an infinite union of hyperplanes, and is
the maximal subset of $V_1$ on which $W$ acts naturally. The following
statement is the main motivation to consider this action.

\begin{prop}\label{prop:Estable}
~
\begin{enumerate}[(i)]
\item $\hpp$ and $E$ are contained in $D$;
\item $\hpp$ and $E$ are stable by the action of $W$; moreover
  $\h\Phi=W\cdot \Delta$;
\item the topological closure $\hpp\sqcup E$ of $\hpp$ is stable by
  the action of $W$.
\end{enumerate}
\end{prop}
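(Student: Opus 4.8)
The plan is to prove (i) first and then deduce (ii) and (iii) from it, using the continuity of the $W$-action on $D$ recorded just before the statement. The easy half of (i) is $\hpp\subseteq D$. I would start from the fact that $\Phi$ avoids $V_0$: since $\Phi=\pp\sqcup(-\pp)$ with $\pp\subseteq\cone(\DD)\setminus\{0\}\subseteq V_0^+$, every root has $|\rho|_1\neq 0$, so $\Phi\subseteq V\setminus V_0$. As $\Phi$ is $W$-stable and $V_0$ is a linear subspace, for any $\h\rho\in\hpp$ and any $w\in W$ we have $w(\h\rho)=w(\rho)/|\rho|_1\notin V_0$; letting $w$ range over $W$ this says precisely $\h\rho\in D$, so $\hpp\subseteq D$.

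The substantial point is $E\subseteq D$, which I expect to be the main obstacle. Fix $\ell\in E$, write $\ell=\lim\h{\rho_n}$ for an injective sequence $(\rho_n)$ in $\pp$, fix $w\in W$, and suppose for contradiction that $w(\ell)\in V_0$, i.e. $|w(\ell)|_1=0$. Since $\h{\rho_n}\in D$, the action is defined and $w\cdot\h{\rho_n}=\h{w(\rho_n)}$ is again a normalized root (the normalization of $w(\rho_n)$ equals that of its positive representative), hence lies in the compact set $\conv(\DD)$ and stays bounded. On the other hand, by continuity of the linear map $w$ we have $w(\h{\rho_n})\to w(\ell)\neq 0$ (nonzero because $\ell\neq 0$ and $w$ is invertible), while continuity of the linear functional $|\cdot|_1$ gives $|w(\h{\rho_n})|_1\to|w(\ell)|_1=0$. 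Therefore
\[
\|w\cdot\h{\rho_n}\|=\frac{\|w(\h{\rho_n})\|}{\bigl|\,|w(\h{\rho_n})|_1\,\bigr|}\xrightarrow[n\to\oo]{}\oo,
\]
contradicting boundedness in $\conv(\DD)$. Hence $w(\ell)\notin V_0$ for every $w\in W$, that is $\ell\in D$. This divergence argument, and the use of compactness of the simplex $\conv(\DD)$, is the crux of the whole proposition.

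Granting (i), part (ii) follows quickly. For stability of $\hpp$: for $\h\rho\in\hpp$ and $w\in W$, $w\cdot\h\rho=\h{w(\rho)}$ with $w(\rho)\in\Phi$, whose normalization lies in $\h\Phi=\hpp$; thus $w(\hpp)\subseteq\hpp$, and applying $w^{-1}$ yields equality. For $\h\Phi=W\cdot\DD$: each simple root has $|\alpha|_1=1$, so $\alpha=\h\alpha\in V_1$ and $w\cdot\alpha=\h{w(\alpha)}$; since $\Phi=W(\DD)$, we get $\h\Phi=\{\h{w(\alpha)}\mid w\in W,\ \alpha\in\DD\}=W\cdot\DD$. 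For stability of $E$: given $\ell=\lim\h{\rho_n}\in E$ and $w\in W$, the sequence $(w(\rho_n))$ is injective, so $(w\cdot\h{\rho_n})=(\h{w(\rho_n)})$ is an injective sequence of normalized roots; by continuity of the $W$-action on $D$ (and $\ell\in D$ from (i)) it converges to $w\cdot\ell$, whence $w\cdot\ell\in\overline{\hpp}=\hpp\sqcup E$. Finally $w\cdot\ell\notin\hpp$, since $q(w\cdot\ell)=q(\ell)/|w(\ell)|_1^{2}=0$ (using $\ell\in\h Q$ from Theorem~\ref{thm:inclusion} and $W$-invariance of $B$), whereas every normalized root $\h\sigma$ has $q(\h\sigma)=1/|\sigma|_1^{2}\neq 0$; so $w\cdot\ell\in E$, and equality $w(E)=E$ holds by symmetry.

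Part (iii) is then immediate. The normalized roots are not isotropic while the points of $E$ lie on $Q$, so $\hpp\cap E=\varnothing$ and $\overline{\hpp}=\hpp\sqcup E$ is genuinely a disjoint union; both pieces are contained in $D$ by (i) and are $W$-stable by (ii), hence their union $\hpp\sqcup E$ is $W$-stable as well.
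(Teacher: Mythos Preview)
Your proof is correct and follows the same overall strategy as the paper's. In fact, on the crucial step $E\subseteq D$ you are more careful than the paper: the paper simply asserts that since $w(\h{\rho_n})\in V\setminus V_0$ for all $n$, the same holds ``as well as [for] its limit $w(x)$''---but $V\setminus V_0$ is open, so this passage to the limit is not automatic. Your divergence argument (if $|w(\ell)|_1=0$ then $\|w\cdot\h{\rho_n}\|\to\infty$, contradicting containment in the compact simplex $\conv(\Delta)$) is precisely what is needed to justify that step. Your explicit check that $w\cdot\ell\notin\hpp$ via isotropy is also a clean way to make rigorous what the paper leaves implicit in the phrase ``limit point of $\hpp$''.
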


\begin{proof} First, we show that $\hpp$ is contained in $D$ and that $\hpp$ is stable by $W$. Recall that  $\Phi$ lies in $V\setminus V_0$ and is stable under the action of $W$ on $V$. Let $\h\beta
\in \hpp$ and~${w\in W}$.  Since $\mathbb R\beta = \mathbb R \h\beta$,
we have $\mathbb R w(\h\beta)=w(\mathbb R \h\beta) = w(\mathbb R
\beta)=\mathbb R w(\beta)$. Therefore, since $w(\beta)$ is a root, the
point $w(\h\beta)$ lies in $V\setminus V_0$ and $w\cdot \h\beta =
\widehat{w(\h\beta)}= \h{w(\beta)}$ is a point of $\hpp$. It remains
to prove that:
\[
\forall w \in W, \forall x\in E,\ w(x) \in V\setminus V_0 \text{ and } w \cdot x =  \h{w(x)} \in E  \ .
\]
Take $x\in E$, and $w \in W$. Consider an injective sequence~$(\rho_n)_{n\in \mathbb{N}}$ in $\pp$, such that~$(\h{\rho_n})$
converges to $x$. Then, for all $n$,  $w(\h{\rho_n})$ is in $V\setminus V_0$, as well as its limit~$w(x)$. Since $w$ and $\h{\cdot}$
are continuous, we have:
\[
w \cdot \rho_n = \h{w (\rho_n)} = \h{w (\h{\rho_n})} \xrightarrow[n\to \oo]{} \h{w (x)} = w\cdot x\ .
\]
So $w \cdot x$ is a limit point of the set $\hpp$, i.e., lies in
$E$. The fact that $\h\Phi=W\cdot \Delta$ follows directly from the
the definition of $\Phi$, $\Phi=W(\Delta)$. So~(i) and~(ii) are
proved, and~(iii) is a trivial consequence of them.
\end{proof}

It is possible to restrict the action of $W$ on $D$ to the simplest connected component of $D$, which is the convex set:
\[
D^+:= \bigcap_{w\in W} w(V_0^+) \cap V_1\subseteq V_0^+\cap V_1.
\]
This convex set is still stable by the action of $W$ and does not contain any element of $\h\Phi$ (since $s_\alpha(\alpha)=-\alpha$).
However $D^+$ is of interest to study the action of $W$ on~$E$ (see for instance the proof of Proposition~\ref{prop:visible}):

\begin{prop}  \label{prop:D+}
The set $E$ is contained in $D^+$. In particular, $W(E)$ and $W\cdot E$ are subsets of $V_0^+$, that is, $|w(x)|_1>0$, for all
$x\in E$ and $w\in W$.
\end{prop}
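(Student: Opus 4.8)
The goal is to prove Proposition~\ref{prop:D+}: that $E \subseteq D^+$, where $D^+ = \bigcap_{w\in W} w(V_0^+) \cap V_1$. Since we already know from Proposition~\ref{prop:Estable} that $E$ is stable under the action of $W$ and that $E \subseteq V_1$, the crux is to establish a \emph{single} inclusion: that every limit root lies in the open half-space $V_0^+$. Indeed, if $x \in E$ then for any $w \in W$ we have $w \cdot x \in E$ (by Prop.~\ref{prop:Estable}(ii)), and if we can show $E \subseteq V_0^+$, then $w\cdot x = \h{w(x)} \in V_0^+$, which forces $w(x)$ to lie in $V_0^+$ as well (since normalization by the positive scalar $|w(x)|_1$ does not change the sign of $|\cdot|_1$, and $\h{w(x)}\in V_0^+$ means $|w(x)|_1 > 0$). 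So the whole statement reduces to proving $E \subseteq V_0^+$, and then the claims about $W(E)$ and $W\cdot E$ being in $V_0^+$ (i.e.\ $|w(x)|_1 > 0$) follow immediately from $W$-stability.

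The plan is therefore to show that $E \subseteq V_0^+$, and the natural route is to exhibit a strict separation between $E$ and the boundary hyperplane $V_0$ \emph{inside} $V_1$. Here I would lean on the second item of the corollary following Theorem~\ref{thm:inclusion}: for any $\eps > 0$, the set $\{\rho \in \Phi \mid \dist(\h\rho, \h Q) \geq \eps\}$ is finite, together with the inclusion $E \subseteq \h Q \cap \conv(\Delta)$ already recorded after that theorem. The key geometric fact to pin down is that $\h Q \cap \conv(\Delta)$ is \emph{disjoint} from $V_0$: every point of the normalized isotropic cone lying in the simplex $\conv(\Delta)$ satisfies $|v|_1 = 1 > 0$, since by definition points of $V_1$ have $|\cdot|_1 = 1$. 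But this already shows $E \subseteq V_1 \subseteq V_0^+$ trivially because $V_1 = \{v \mid |v|_1 = 1\} \subseteq V_0^+ = \{v \mid |v|_1 > 0\}$. The genuine content of $D^+$ is not about $x$ itself but about \emph{all its $W$-translates} $w(x)$ staying strictly positive, which is exactly what $W$-stability of $E$ delivers once we know $E\subseteq V_0^+$.

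Concretely, the steps I would carry out are: first, observe $E \subseteq V_1$ and $V_1 \subseteq V_0^+$ directly from the definitions of $V_1$ and $V_0^+$, giving $E \subseteq V_0^+$ at once. Second, invoke Proposition~\ref{prop:Estable}(ii) to get that $E$ is $W$-stable, so for every $x \in E$ and $w \in W$, the point $w\cdot x = \h{w(x)}$ again lies in $E \subseteq V_0^+$; since $\h{w(x)} \in V_0^+$ and normalization scales by the positive factor $|w(x)|_1^{-1}$ precisely when $|w(x)|_1 > 0$, this is equivalent to $|w(x)|_1 > 0$, i.e.\ $w(x) \in V_0^+$. Third, conclude that $x \in w^{-1}(V_0^+)$ for every $w$, hence $x \in \bigcap_{w\in W} w(V_0^+)$, and combined with $x \in V_1$ this gives $x \in D^+$. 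The statements $W(E) \subseteq V_0^+$ and $W\cdot E \subseteq V_0^+$ are then just reformulations of what was shown.

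The main obstacle, and the point requiring care, is the logical interplay between the map $\h{\cdot}$ being defined only on $V\setminus V_0$ and the claim $|w(x)|_1 > 0$: one must be sure that $w(x)$ never lands in $V_0$ (where normalization is undefined) before concluding its sign. This is precisely handled inside the proof of Proposition~\ref{prop:Estable}, where it is shown that for $x \in E$ and $w \in W$ the point $w(x)$ lies in $V\setminus V_0$ and $w\cdot x = \h{w(x)} \in E$; so I would cite that fact rather than re-derive it. The only remaining subtlety is ruling out $|w(x)|_1 < 0$: since $\h{w(x)} = w(x)/|w(x)|_1 \in E \subseteq V_1 \subseteq V_0^+$ has $|\h{w(x)}|_1 = 1 > 0$, and $|\h{w(x)}|_1 = |w(x)|_1 / |w(x)|_1 = \operatorname{sign}(|w(x)|_1)$ would be $-1$ if $|w(x)|_1 < 0$, the positivity $|w(x)|_1 > 0$ is forced. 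This sign bookkeeping is the entire technical heart of the argument.
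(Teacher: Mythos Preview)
Your argument has a genuine gap in the final ``sign bookkeeping'' step, and unfortunately that step is not a technicality but the entire content of the proposition. You write that $|\h{w(x)}|_1 = |w(x)|_1 / |w(x)|_1 = \operatorname{sign}(|w(x)|_1)$, but recall that in this paper $|\cdot|_1$ is the \emph{linear functional} $v \mapsto \sum_{\alpha \in \Delta} v_\alpha$ (the paper explicitly notes it is not a norm). Hence for any $v \notin V_0$ one has $|\h v|_1 = |v|_1/|v|_1 = 1$, regardless of the sign of $|v|_1$. So the fact that $\h{w(x)} \in V_1$ (equivalently $|\h{w(x)}|_1 = 1$) is automatic from the definition of the normalization map and carries no information whatsoever about whether $|w(x)|_1$ is positive or negative. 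Your chain of implications ``$w\cdot x \in E \subseteq V_1 \subseteq V_0^+$, hence $|w(x)|_1 > 0$'' therefore does not go through: the first inclusion is tautological and the conclusion does not follow.

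What is missing is a genuine reason why $w(x)$ cannot land in the negative half-space $-V_0^+$; $W$-stability of $E$ under the normalized action alone cannot supply this. The paper's proof provides exactly this missing ingredient: it reduces (by induction on length, using $W$-stability of $E$) to showing $s_\alpha(x) \in V_0^+$ for $\alpha \in \Delta$ and $x \in E$, and then proves this by approximating $x$ by normalized positive roots $\h{\rho_n}$ with $\rho_n \neq \alpha$. The key Coxeter-theoretic fact is that a simple reflection $s_\alpha$ sends $\Phi^+ \setminus \{\alpha\}$ into $\Phi^+$, so $s_\alpha(\rho_n) \in \Phi^+ \subseteq V_0^+$; passing to the limit (and using $E \subseteq D$ to rule out landing on $V_0$) gives $s_\alpha(x) \in V_0^+$. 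This positivity-preservation of simple reflections on positive roots is the substantive input your argument lacks.
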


\begin{proof}
We need to prove that for any $w \in W$ and $x\in E$, the image $w(x)$
stays in~$V_0^+$. Using a direct induction on the length of $w$---that
is, the minimal length of the word $w$ in the alphabet~$S$---this is a
consequence of the following fact:
\begin{equation*}
\forall \alpha \in \DD, \forall x\in E,  s_{\alpha}(x) \in V_0^+.
\end{equation*}
Indeed we know from Proposition \ref{prop:Estable} that $E$ is stable
by the action of $W$. Let us prove this fact. Take $x\in E$, and
$\alpha \in \DD$. Consider an injective sequence~$(\rho_n)_{n\in\mathbb{N}}$
in~$\pp$, such that $(\h{\rho_n})$ converges to $x$. We can suppose
that $(\rho_n)$ does not contain~$\alpha$. Then, for all $n$,
$s_{\alpha}(\rho_n)$ is in~$\pp$, and in particular in~$V_0^+$, so
$s_{\alpha}\cdot \h{\rho_n}$ is also in $V_0^+$, as well as its limit
$s_{\alpha}\cdot x$.
\end{proof}

\begin{remark}
Although  we do not know any better description of $D^+$ than the definition given here,  M.~Dyer suggested that if we replace
$V_0^+$ with $\cone(\DD)$ in the definition of $D^+$, then the convex set we obtain may be equal to $\conv(E)$. We do not
know a proof of this statement or even if $D^+$ is  equal to this new convex set.
\end{remark}

\begin{remark}
Obviously, this action is not faithful when $\Phi$ is an affine root system (since~$E$ is  finite, see Corollary~\ref{rem:affine}). We
do not know whether this action is faithful for  infinite irreducible non affine root systems of rank $\geq 3$.
\end{remark}

 The aim behind studying the set $E \subseteq V_1$ is to be able to represent ``limit points of roots'' in an affine space. Now we can
also study an action of $W$ on these limit points, and it turns out that the action of the reflections of~$W$ on $E$ (and more generally
on $Q\cap D$) is geometric in essence:

\begin{prop}[A geometric description] \label{prop:actiongeo}
  ~
  \begin{enumerate}[(i)]
  \item  Let $\rho \in \Phi$, and $x\in D\cap Q$. Denote by $L(\h{\rho},x)$ the line containing~$\h{\rho}$ and~$x$. Then:
    \begin{enumerate}[(a)]
    \item if $B(\rho,x)=0$, then $L(\h{\rho},x)$ intersects $Q$ only in $x$, and $s_{\rho} \cdot x = x$;
    \item if $B(\rho,x)\neq 0$, then $L(\h{\rho},x)$ intersects $Q$ in the two distinct points $x$ and~$s_{\rho}\cdot x$.
    \end{enumerate}
  \item Let $\rho_1 \neq \rho_2\in\Phi$, $x\in L(\h\rho_1,\h\rho_2)\cap Q$ and $w\in W$. Then  $w\cdot x\in L(w\cdot\h\rho_1,w\cdot\h\rho_2)\cap Q$.
  \end{enumerate}
\end{prop}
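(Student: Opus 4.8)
The plan is to prove the two parts in sequence, using the explicit formula for the reflection $s_\rho$ together with the definition of the $W$-action $w\cdot x = \widehat{w(x)}$ and the fact (Proposition~\ref{prop:Estable}) that $Q\cap D$ is stable under this action. For part~(i), I would start from the line $L(\h\rho, x)$ and parametrize its points affinely as $y_t = (1-t)x + t\,\h\rho$, noting that any point of $L(\h\rho,x)\cap Q$ is a normalization $\widehat{z}$ of a genuine vector $z = (1-t)x + t\,\rho$ on the corresponding line through $x$ and $\rho$ in $V$ (here I use that $x\in D\subseteq V_1$ and $\h\rho\in V_1$, so the affine line lies in $V_1$ and lifts to a line in $V$). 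Since normalizing does not change the isotropic condition, $\widehat z\in Q \iff q(z)=0$. The key computation is then to expand
\[
q(z) = q\bigl((1-t)x + t\rho\bigr) = (1-t)^2 q(x) + 2t(1-t)B(x,\rho) + t^2 q(\rho).
\]
Using $q(x)=0$ (as $x\in Q$) and $q(\rho)=1$ (as $\rho\in\Phi$), this reduces to $t\bigl(2(1-t)B(x,\rho) + t\bigr)$.

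From this factorization the two subcases fall out cleanly. If $B(\rho,x)=0$, then $q(z)=t^2$, which vanishes only for $t=0$, i.e.\ only at $x$ itself; this gives~(a). Moreover in this case the reflection formula $s_\rho(x) = x - 2B(\rho,x)\rho = x$ shows $s_\rho$ fixes $x$ in $V$, hence $s_\rho\cdot x = \h{s_\rho(x)} = \h x = x$. If $B(\rho,x)\neq 0$, then $q(z)=0$ has the two distinct solutions $t=0$ and the value $t_0$ solving $2(1-t)B(x,\rho)+t = 0$; the first recovers $x$, and I must identify the second intersection point with $s_\rho\cdot x$. For this I would check directly that $s_\rho(x) = x - 2B(\rho,x)\rho$ is a nonzero multiple of the vector $z$ corresponding to $t_0$ — equivalently, that $s_\rho(x)$ lies on the line $\mathbb{R}x + \mathbb{R}\rho$ (immediate from the formula) and is isotropic (immediate since $s_\rho$ preserves $B$ and $q(x)=0$). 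Normalizing then yields $s_\rho\cdot x \in L(\h\rho,x)\cap Q$, distinct from $x$ precisely because $B(\rho,x)\neq 0$ forces $s_\rho(x)\neq x$ up to scaling; this establishes~(b). I should also confirm $s_\rho\cdot x$ is well-defined, i.e.\ $s_\rho(x)\notin V_0$, which holds because $x\in D$ and $D$ is $W$-stable by construction.

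For part~(ii), I would argue that the action of $w$ sends the line $L(\h\rho_1,\h\rho_2)$ into the line $L(w\cdot\h\rho_1, w\cdot\h\rho_2)$ and preserves $Q\cap D$. The cleanest route is to work upstream in $V$: the normalized line $L(\h\rho_1,\h\rho_2)$ is the normalization of the plane $P=\Span(\rho_1,\rho_2)$, and since $w$ is linear it maps $P$ to $\Span(w\rho_1,w\rho_2)$, whose normalization is $L(w\cdot\h\rho_1, w\cdot\h\rho_2)$. Because $x\in L(\h\rho_1,\h\rho_2)\cap Q$ lifts to an isotropic vector $z\in P$, linearity gives $w(z)\in\Span(w\rho_1,w\rho_2)$ and $q(w(z))=q(z)=0$ since $w\in\OO_B(V)$; normalizing and using the continuity/stability from Proposition~\ref{prop:Estable} gives $w\cdot x = \widehat{w(z)} \in L(w\cdot\h\rho_1, w\cdot\h\rho_2)\cap Q$, as desired.

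The main obstacle I anticipate is bookkeeping the passage between the affine picture in $V_1$ and the linear picture in $V$ — specifically, making sure that every claimed intersection point of the \emph{affine} line with $Q$ really is the normalization of a point on the corresponding \emph{linear} line, and that no intersection is lost or spuriously gained under normalization (normalization is a bijection from $V\setminus V_0$ onto $V_1$ only after quotienting by positive scaling, and $Q$ is a cone, so one must check signs of $|z|_1$ to ensure the isotropic lift actually lands in $V\setminus V_0$ and normalizes into $V_1$ rather than being sent to infinity). I expect this to be a matter of care rather than genuine difficulty, handled by invoking $x\in D$ and the $W$-stability of $D$ and of $\h Q$ already established. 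Everything else is the short quadratic computation above plus the linearity and $B$-invariance of $w$.
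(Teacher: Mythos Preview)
Your proposal is correct and follows essentially the same route as the paper: compute the quadratic $q$ along the line, factor using $q(x)=0$, read off the one or two roots according to whether $B(\rho,x)$ vanishes, and identify the second root with $s_\rho\cdot x$ via the reflection formula and $B$-invariance; part~(ii) is likewise handled by lifting to the plane $\Span(\rho_1,\rho_2)$ and using linearity plus $B$-invariance of $w$. The only cosmetic difference is that you lift to the line through $x$ and $\rho$ (using $q(\rho)=1$) while the paper works directly in $V_1$ with $\h\rho$ (using $q(\h\rho)>0$); the quadratic factorization is the same either way, and your anticipated bookkeeping issues are indeed only matters of care.
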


To visualize this proposition, we refer to Figure~\ref{fig:Eoo} in
which we draw the images of the two yellow points $x$ and $y$ on the
edge $[\beta,\gamma]$ of $\conv(\Delta)$ under the action
of~$s_\alpha$ and of~$s_\beta s_\alpha$.

\begin{proof}
(i) Let $\rho \in \Phi$, and $x\in D\cap Q$. First, we know that $s_\rho (x)$ and $s_{\rho} \cdot x$ belong to $Q$. Note also that
$s_{\rho} \cdot x \in \Span (\rho,x) \cap V_1=L(\h{\rho},x)$, so $x$ and $s_{\rho}\cdot x$ are always intersection points of
$L(\h{\rho},x)$ with $Q$. Since $s_{\rho}$ is a reflection and $x\in V_1$, we have
\[
|L(\h{\rho},x)\cap Q |= 1 \ \Rightarrow \ s_{\rho}\cdot x =x \ \Leftrightarrow \ s_{\rho}(x)=x \ \Leftrightarrow B(\rho,x)=0\ .
\]
Consider an element $u_\lambda=\lambda \h{\rho} + (1-\lambda) x$ in $L(\h{\rho},x)\cap Q$.
Then
\begin{eqnarray*}
0  =  q(u_\lambda) &=& \lambda^2 q(\h{\rho}) + 2\lambda(1-\lambda)B(\h{\rho},x) + (1-\lambda)^2 q(x)\\
&=& \lambda^2 q(\h{\rho}) + 2\lambda(1-\lambda)B(\h{\rho},x).
\end{eqnarray*}
Hence $L(\h\rho,x)$ intersects $Q$ in at most two points. In the case that $B(\rho,x)\not =0$, the two points $x$, $s_\rho\cdot x$ are distinct, and thus the third sentence of the present proof yields that $L(\h{\rho},x)\cap Q=\{x,s_\rho\cdot x\}$. In the case that
 $B(\rho,x)=0$, then $0= q(u_\lambda) = \lambda^2q(\h \rho)$, so  $u_{\lambda}=x$ and $L(\h{\rho},x)\cap Q=\{x\}$.

\smallskip

\noindent (ii) Let $\rho_1 \neq \rho_2\in\Phi$. For any $x\in E\subseteq Q$ and  $w\in W$,  since $q(x) = B(x,x) = B(w(x),w(x)) = q(w(x))$, it follows that both $w(x)$ and $w\cdot x$ are in $Q$. Therefore, we only need to check that if $x \in L(\h\rho_1,\h\rho_2)$
then $w\cdot x\in L(w\cdot\h\rho_1,w\cdot\h\rho_2)$.

Let  $x\in L(\h\rho_1,\h\rho_2)\cap Q$. Note that $x\in E$ as explained in~\S\ref{subsec:examplesE}.  Let $w\in W$ and set $\rho_1':=w(\rho_1)$ and $\rho_2':=w(\rho_2)$. Then $w\cdot\rho_1=\h{\rho_1'}$ and
$w\cdot\rho_2=\h{\rho_2'}$.  Since $x\in L(\h\rho_1,\h\rho_2)\subseteq \Span (\rho_1,\rho_2 )$,
it follows that $w(x)$ is a point in the plane $P$ spanned
by~$\rho_1'$ and~$\rho_2'$. Since $P\cap V_1 =
L(\h{\rho_1'},\h{\rho_2'})$, and $w\cdot x\in P\cap V_1$, it follows
that $w\cdot x\in  L(\h{\rho_1'},\h{\rho_2'})\cap Q$ as required.
\end{proof}

As we can see on Figure~\ref{fig:Eoo}, when $L(\h{\rho},x)$ intersects $Q$  in two points, one of them is closest to $\h\rho$ than the
other; the closest point to $\h\rho$ is then {\em visible from $\h\rho$}. Let us describe this phenomenon more precisely.

\begin{defi} We say that $x\in \h Q$ is {\em visible from $v\in V_1$} if the segment $[v,x]$ intersects $\h Q$ in only one point,
which has to be $x$; in other words, $x$ is visible from~$v$ if and only if $[v,x]\cap Q=\{x\}$.
\end{defi}

The following proposition sums up what we observe in Figure~\ref{fig:Eoo}.

\begin{prop}
\label{prop:visible}
Let $x\in E$ and $\rho\in \Phi$. Then,
\begin{enumerate}[(i)]
\item $x$ is visible from $\h\rho$ if and only if $B(\h\rho,x)\geq 0$;
\item if $\rho\in\Phi^+$,  $x$ is visible from $\h\rho$ if and only $B(\rho,x)\geq 0$;
\item if $\rho\in\Phi^+$ and $w\in W$ such that $w(\rho)\in\Phi^+$,
  then $x$ is visible from $\h\rho$ if and only $w\cdot x$ is visible
  from $w\cdot\h\rho$.
\end{enumerate}
\end{prop}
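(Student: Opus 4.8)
The plan is to prove (i) by a direct computation along the segment, and then to obtain (ii) and (iii) as short consequences, using only the $W$-invariance of $B$ together with the positivity facts already established.

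For (i), I would parametrize the segment $[\h\rho,x]$ by $u_\lambda=\lambda\h\rho+(1-\lambda)x$ with $\lambda\in[0,1]$, so that $u_0=x$ and $u_1=\h\rho$; since $|\h\rho|_1=|x|_1=1$, the whole line $L(\h\rho,x)$ in fact lies in $V_1$, hence $[\h\rho,x]\cap Q=[\h\rho,x]\cap\h Q$. Writing $a:=q(\h\rho)$ and $b:=B(\h\rho,x)$ and using $q(x)=0$ (as $x\in E\subseteq Q$ by Theorem~\ref{thm:inclusion}), I get
\[
q(u_\lambda)=\lambda^2 a+2\lambda(1-\lambda)b=\lambda\bigl[\lambda a+2(1-\lambda)b\bigr].
\]
The key preliminary observation is that $a=q(\h\rho)=q(\rho)/|\rho|_1^{2}=1/|\rho|_1^{2}>0$, because $q(\rho)=1$ for every root (the form $B$ is $W$-invariant and equals $1$ on simple roots); in particular $\h\rho\notin Q$, so $\h\rho\neq x$ and the parametrization is injective. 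Thus $\lambda=0$ recovers $x$, and when $b\neq0$ the only other zero is $\lambda^{*}=-2b/(a-2b)$. By definition $x$ is visible from $\h\rho$ exactly when $[\h\rho,x]\cap Q=\{x\}$, i.e.\ when $\lambda^{*}\notin(0,1)$.

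The main obstacle is really the only computational content: the sign analysis locating $\lambda^{*}$ relative to $[0,1]$, which I would organize according to the sign of $b$, keeping $a>0$ throughout. When $b=0$, $q(u_\lambda)=a\lambda^{2}$ vanishes only at $\lambda=0$, so $x$ is visible. When $b>0$, a short check on the sign of $a-2b$ shows the second zero satisfies $\lambda^{*}<0$ or $\lambda^{*}>1$ (or does not exist when $a=2b$), so $x$ is again visible. When $b<0$, one has $a-2b>0$, whence $\lambda^{*}>0$, and $\lambda^{*}\le1\Leftrightarrow a\ge0$ forces $\lambda^{*}\in(0,1)$, so $x$ is not visible. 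This establishes (i): $x$ is visible from $\h\rho$ if and only if $b=B(\h\rho,x)\ge0$.

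Part (ii) is then immediate: for $\rho\in\Phi^{+}$ we have $|\rho|_1>0$, so $B(\h\rho,x)=B(\rho,x)/|\rho|_1$ has the same sign as $B(\rho,x)$, and (i) gives visibility iff $B(\rho,x)\ge0$. For (iii), I would invoke that $w\cdot x\in E$ (stability of $E$ under $W$, Proposition~\ref{prop:Estable}), that $w\cdot\h\rho=\h{w(\rho)}$ with $w(\rho)\in\Phi^{+}$, and that $|w(x)|_1>0$ for $x\in E$ (Proposition~\ref{prop:D+}). Since $w$ preserves $B$,
\[
B\bigl(w(\rho),\,w\cdot x\bigr)=\frac{B\bigl(w(\rho),w(x)\bigr)}{|w(x)|_1}=\frac{B(\rho,x)}{|w(x)|_1},
\]
which has the same sign as $B(\rho,x)$. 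Applying (ii) first to the positive root $\rho$ and then to the positive root $w(\rho)$ with the limit root $w\cdot x$ yields $x$ visible from $\h\rho\Leftrightarrow B(\rho,x)\ge0\Leftrightarrow B(w(\rho),w\cdot x)\ge0\Leftrightarrow w\cdot x$ visible from $w\cdot\h\rho$, which completes (iii).
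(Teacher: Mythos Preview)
Your proof is correct. The overall logic matches the paper's: establish~(i) by locating the second intersection of $L(\h\rho,x)$ with $Q$ relative to the segment, deduce~(ii) from~(i) by positivity of $|\rho|_1$, and deduce~(iii) from~(ii) via the $W$-invariance of $B$ and the positivity fact $|w(x)|_1>0$ from Proposition~\ref{prop:D+}. Part~(iii) in particular is essentially identical to the paper's argument.

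The difference lies in how~(i) is obtained. You parametrize the segment, solve the quadratic $q(u_\lambda)=\lambda[\lambda a+2(1-\lambda)b]=0$ explicitly, and do a case analysis on the sign of $b$ to locate $\lambda^*$ relative to $(0,1)$. The paper instead leans on Proposition~\ref{prop:actiongeo}, which already identifies the second intersection point as $s_\rho\cdot x$, and then reads off its position from the reflection formula $s_\rho(x)=x-2B(\rho,x)\rho$: when $B(\rho,x)<0$ this exhibits $s_\rho(x)$ as an element of the interior of $\cone(x,\rho)$, so its normalization sits strictly between $x$ and $\h\rho$; when $B(\rho,x)>0$ the roles swap. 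Your route is more self-contained (it does not invoke Proposition~\ref{prop:actiongeo}) and makes the whole computation explicit; the paper's route is shorter and more geometric, trading the quadratic algebra for a cone-membership observation. Both reach the same conclusion.
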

\begin{proof}
If $B(\rho,x)=0$, then by Proposition~\ref{prop:actiongeo},
$L(\h\rho,x)\cap Q=\{x\}$, so both statements are obviously true.
Assume now that $B(\rho, x)\not = 0$, which is equivalent by
Proposition~\ref{prop:actiongeo} to $|L(\h\rho,x)\cap Q|=2$. First
suppose that $\rho\in\Phi^+$. So the linear forms~${B(\rho, \cdot)}$ and
$B(\h\rho,\cdot)$ are of the same sign. In this case, the proof
follows from the equality $s_{\rho}(x)=x - 2 B(\rho,x) \rho$, which
implies that when $B(\rho,x)<0$, $s_{\rho}(x)$ is in the interior of
$\cone(x,\rho)$, and when $B(\rho,x)>0$, $x$ is in the interior of
$\cone (s_{\rho}(x),\rho)$. Now, if $\rho\in\Phi^-$, apply the above
case with $-\rho\in\Phi^+$, which proves~(i) and~(ii).

\noindent For~(iii), we use Proposition~\ref{prop:D+}: $E\subseteq
D^+$, therefore $w(x)\in V_0^+$. So~${|w(x)|_1>0}$, which means that
$B(w(x),\cdot)$ and $B(w\cdot x,\cdot)$ have the same sign. Since we
have ${w(\rho)\in \Phi^+\subseteq V_0^+}$, $B(\cdot,w(\rho))$ and
$B(\cdot, w\cdot \h\rho)$ have also the same sign. We conclude
using~(i) and~(ii).
\end{proof}

We end this discussion by  completing the example of infinite dihedral groups.

\begin{ex}[The case of infinite dihedral groups]  \label{ex:rk2order}
Let $(\Phi,\Delta)$ be of rank~$2$ as in Example~\ref{ex:rk2}   (illustrated in Figure~\ref{fig:dih1} and Figure~\ref{fig:dih2}). Recall
that $\Delta=\{\alpha,\beta\}$ with $B(\alpha,\beta)\leq -1$ and that $V_1$ is the line $L(\alpha,\beta)$. Let $x\in \h Q$ such that $B(x,\alpha)\geq 0$, so $x$ is
visible from $\alpha$. Since $s_\beta\cdot \alpha\in V_1 = L(\alpha,\beta)$ and $s_\beta\cdot \beta=\beta\in V_1 = L(\alpha,\beta)$, it follows from Proposition~\ref{prop:actiongeo}~(ii) that $s_\beta\cdot x\in L(\alpha,\beta)\cap Q$, and similarly that $s_\alpha\cdot x\in L(\alpha,\beta)\cap Q$. Now since $L(\alpha,\beta)=L(x,\alpha)=L(x,\beta)$, it follows from Proposition~\ref{prop:actiongeo}~(i) that either $x=s_\beta\cdot x$ and $x=s_\alpha\cdot x$ or $L(\alpha,\beta)\cap Q = \{x,s_\beta\cdot x\}= \{x,s_\alpha\cdot x\}$ has cardinality $2$. Thus in both case,  $s_\beta\cdot x=s_\alpha\cdot x$, and this point is
visible from  $\beta$. So the segment $[\alpha,x]$ intersects $\h Q$ only in $x$.

\noindent We endow the line $L(\alpha,\beta)=V_1$  with the total order on $\mathbb R$ by orienting it from $\alpha$ to $\beta$. So from the above discussion  we get that
\[
\alpha<x\leq s_\alpha\cdot x<\beta.
\]
We are particularly interested in the following partition of the segment $[\alpha,\beta]$:
\[
[\alpha,\beta]=[\alpha,x)\sqcup[x,s_\alpha\cdot x]\sqcup(s_\alpha\cdot x,\beta].
\]
 It is straightforward computations\footnote{See for instance \cite[p.5 Eq. (1.2)]{fu1} for details.} to prove the following  nice geometric
 description of  the action of $s_\alpha$ and $s_\beta$ on $\h\Phi\sqcup E$:
\[
\alpha<s_\alpha\cdot \beta<s_\alpha s_\beta \cdot \alpha< s_\alpha s_\beta s_\alpha\cdot \beta<\dots<(s_\alpha s_\beta)^n\cdot \alpha< (s_\alpha s_\beta)^n s_\alpha\cdot \beta <\dots <x
\]
and
\[
s_\beta\cdot x = s_\alpha\cdot x< \dots<s_\beta (s_\alpha s_\beta)^n\cdot \alpha< (s_\beta s_\alpha)^n \cdot \beta<\dots <s_\beta s_\alpha \cdot \beta< s_\beta \cdot \alpha< \beta.
\]
Moreover, we note the following two facts:
\begin{itemize}
\item the bilinear form $B$ is positive on {{$\left[\alpha,x\right[
        \times \left[\alpha,x\right[$}} and $\left]s_\alpha\cdot
      x,\beta\right] \times \left]s_\alpha\cdot x,\beta\right]$, and
  negative on $\left[\alpha,x\right[ \times \left]s_\alpha\cdot
    x,\beta\right]$;

\item the depth ($\dep$) is increasing on $\left[\alpha,x\right[ \cap
    \h\Phi$ and decreasing on $\left]s_\alpha\cdot x,\beta\right]\cap
  \h\Phi$ (where we set $\dep(\h\rho):=\dep(\rho)$ for $\rho \in
  \pp$).
\end{itemize}
\end{ex}

%   Subsection : Fractal description of $E$
%
%%%%%%%%%%%%%%%%%%%%%%%%%%%%%%%%%%%%%%%%%%%%%%%%%%%%%%%%%%%%%%

\subsection{Fractal description of $E$}
\label{subsec:fractal}
~

We know that $E$ is contained in $\h{Q}$
(Theorem~\ref{thm:inclusion}), but we could obtain a more precise
inclusion, using the action of $W$. First of all, $E$ is also included
in $\conv(\DD)$. Now consider the example illustrated in
Figure~\ref{fig:Eoo}, and suppose that we can act by~$W$ on~$\h{Q}$,
with the action of \S\ref{subsec:action}, i.e., that $\h{Q}\subseteq
D$ (this is not true in general). Thus, as there are no limit points
in the red arc which is outside the triangle $\conv(\Delta)$, there
are also no limit points on its image by~$s_\alpha$, which is the
smaller red arc on the bottom left; and not either in the subsequent
image by~$s_\beta$. So $E$ seems to be contained in a fractal
self-similar subset $F$ of $\h{Q}$, obtained by removing from $\h{Q}$
all these iterated arcs. The rank $4$ pictures are even more
convincing of this property; see in particular Figures~\ref{fig:E3doo}
and~\ref{fig:sage}(b), where the fractal $F$ obtained (an ellipsoid
cut out by an infinite number of planes) looks like an Appolonian
gasket drawn on $\h{Q}$.

We conjecture that $E$ is actually equal to $F$. In the particular case where $\h{Q}$ is contained in $\conv(\Delta)$---e.g.,
Figure~\ref{fig:E3d3}, and all the cases of a rank $3$ Coxeter group with classical representation, as Figures~\ref{fig:sage}(a)
and~\ref{fig:E237}---, this means that $E$ fills up $\h{Q}$. We also conjecture the following more precise property:

\begin{conj}
  We say that $\Delta_I\subseteq \Delta$ is {\em generating} if
  $\h{Q_I}:=\h Q\cap \Span(\Delta_I)$ is included in
  $\conv(\Delta_I)$. Then we have the following properties:
  \begin{enumerate}[(i)]
  \item if $\Delta_I\subseteq \Delta$ is generating, then $E(\Phi_I)
    =\h{Q_I}$;
  \item the set $E$ is the topological closure of the fractal
    self-similar subset $F_0$ of $\h Q$ defined by:
    \[F_0:=W\cdot\Bigg( \bigcup_{\substack{\Delta_I\subseteq \Delta
      \\ \Delta_I \ \mathrm{generating}}} \h Q_I\Bigg).
    \]
  \end{enumerate}
\end{conj}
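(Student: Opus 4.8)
The plan is to establish the two inclusions $\overline{F_0}\subseteq E$ and $E\subseteq\overline{F_0}$ separately, relying throughout on two structural facts: $E$ is closed (it is an accumulation set) and $W$-stable (Proposition~\ref{prop:Estable}). I would first observe that half of~(i) is automatic. For any $\Delta_I\subseteq\Delta$ the standard parabolic subsystem $(\Phi_I,\Delta_I)$ is again a based root system, this time in $(\Span(\Delta_I),B)$, so Theorem~\ref{thm:inclusion} applied to it gives $E(\Phi_I)\subseteq \h Q\cap\Span(\Delta_I)=\h{Q_I}$; and since $\h{\Phi_I}\subseteq\h\Phi$, any limit of normalized roots of $\Phi_I$ is a limit root of $\Phi$, so $E(\Phi_I)\subseteq E$. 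Thus the real content of~(i) is the reverse inclusion $\h{Q_I}\subseteq E(\Phi_I)$, i.e.\ the assertion that the normalized isotropic cone \emph{fills up} with limit roots precisely when it lies inside the simplex. Granting~(i), the inclusion $\overline{F_0}\subseteq E$ becomes formal: each generating $\Delta_I$ gives $\h{Q_I}=E(\Phi_I)\subseteq E$, $W$-stability upgrades this to $W\cdot\h{Q_I}\subseteq E$, hence $F_0\subseteq E$, and closedness yields $\overline{F_0}\subseteq E$.

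To prove~(i) I would reduce to the irreducible case through Proposition~\ref{prop:irred}, decomposing $\Delta_I$ into its irreducible components. Finite components contribute nothing, since $B$ is then positive definite on their span and $\h{Q_I}$ is empty there; an irreducible affine component contributes a single point equal to $\h{Q_I}$ by Corollary~\ref{rem:affine}. Here one must be careful: a \emph{reducible} affine $\Delta_I$ (e.g.\ $\widetilde{A_1}\times\widetilde{A_1}$) can have $\h{Q_I}$ a whole segment inside $\conv(\Delta_I)$ while $E(\Phi_I)$ is only two points, so the statement of~(i) can only hold once the generating condition is read componentwise; I would therefore build the irreducible reduction into the argument and restrict to irreducible generating pieces. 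The essential remaining case is then $(\Phi_I,\Delta_I)$ irreducible, infinite and non-affine, with $\h{Q_I}$ a proper quadric contained in $\conv(\Delta_I)$, for which I must show every point of $\h{Q_I}$ is a limit root.

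The filling argument I propose is dynamical. Each infinite-order element $w\in W_I$ acts on $\h{Q_I}$ by the projective map of Proposition~\ref{prop:actiongeo}, and whenever $w$ has a real eigenvalue of modulus greater than $1$ it has an attracting fixed point $x_w\in\h{Q_I}$; since $w^k\cdot\h\rho\to x_w$ for any root $\rho$ not fixed by $w$, the point $x_w$ is the limit of an injective sequence of normalized roots and hence lies in $E(\Phi_I)$. Conjugation intertwines these fixed points, $w'\cdot x_w=x_{w' w (w')^{-1}}$, so $\{x_w\}$ is $W_I$-invariant and its closure is a closed $W_I$-invariant subset of $E(\Phi_I)\subseteq\h{Q_I}$. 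The plan is to show this closure is all of $\h{Q_I}$: the hypothesis $\h{Q_I}\subseteq\conv(\Delta_I)$ should force $W_I$ to leave invariant no open arc of the quadric, i.e.\ to act as a group of the first kind with full limit set. I expect this to be the main obstacle. In the Lorentzian case (signature $(n-1,1)$) it is the classical fact that a Fuchsian or Kleinian group whose fundamental domain stays away from the boundary has limit set the entire sphere at infinity; in higher signature, where $\h{Q_I}$ is a higher-dimensional quadric, it demands a genuinely new minimality argument.

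For the reverse inclusion $E\subseteq\overline{F_0}$ I would invoke the density Theorem~\ref{thm:density}, reducing the task to showing that the limit points of infinite dihedral reflection subgroups lie in $\overline{F_0}$. Given such a point $x\in Q\cap L(\h\rho_1,\h\rho_2)$ with $\rho_1,\rho_2\in\pp$, Proposition~\ref{prop:actiongeo}(ii) lets me transport the configuration by any $w\in W$ while keeping $x$ on the image line: $w\cdot x\in L(w\cdot\h\rho_1,w\cdot\h\rho_2)\cap Q$. I would then run a depth reduction on the pair $\{\rho_1,\rho_2\}$, using \cite[Lemma~4.6.2]{bjorner-brenti} exactly as in the proof of Theorem~\ref{thm:inclusion}, to bring it into a bounded ``standard position'', and argue that such minimal configurations are finite modulo $W$ and that the plane $\Span(w(\rho_1),w(\rho_2))$ then lies in some generating $\Span(\Delta_I)$, so that $w\cdot x$ lands in (or arbitrarily near) $\h{Q_I}\subseteq F_0$. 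The delicate point is that a dihedral reflection subgroup need not be $W$-conjugate to a parabolic one, so the reduction generally only \emph{approximates} $x$ by points of $F_0$; controlling this approximation through the contraction estimate of Lemma~\ref{prop:dpnorm} is the second main obstacle, and is presumably the place where the finiteness-of-seeds result announced in \cite{dhr} would be the decisive input.
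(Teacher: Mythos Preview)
The statement you are attempting to prove is explicitly labeled a \emph{Conjecture} in the paper; there is no proof given, and the surrounding text (``We conjecture that\ldots'', ``We also conjecture the following more precise property'') makes clear the authors regard both parts as open. So there is no paper proof to compare your proposal against.

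Your sketch correctly isolates the formal parts: the inclusion $E(\Phi_I)\subseteq\h{Q_I}$ is Theorem~\ref{thm:inclusion} applied parabolically, and granting~(i) the inclusion $\overline{F_0}\subseteq E$ is immediate from $W$-stability and closedness of $E$. You also correctly identify the two substantive obstacles, namely the ``filling'' direction $\h{Q_I}\subseteq E(\Phi_I)$ for generating $\Delta_I$, and the approximation of arbitrary dihedral limit points by $W$-translates of points in generating pieces. But you do not resolve either: for the first you invoke an expected minimality or first-kind property of the $W_I$-action on $\h{Q_I}$ without proof, and for the second you appeal to a depth-reduction whose termination in a generating face you leave unjustified, explicitly deferring to results announced in~\cite{dhr}. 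These are precisely the gaps that make the statement a conjecture rather than a theorem.

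One side remark: your worry about reducible affine $\Delta_I$ such as $\widetilde{A_1}\times\widetilde{A_1}$ does not actually bite. In that case $\h{Q_I}$ is a full affine line through the two midpoints, not just the segment, so it is not contained in $\conv(\Delta_I)$ and $\Delta_I$ is not generating; the same holds for products of irreducible affine pieces in general.
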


\begin{ex}
In the example pictured in Figure \ref{fig:E3doo}, the set $F_0$ defined above is the fractal constituted by the infinite union of all
the  circles. The set $F$ (and, conjecturally, the set $E$) is the complement, on the red sphere, of the union of the associated open ``disks''.
\end{ex}

%%%%%%%%%%%%%%%%%%%%%%%%%%%%%%%%%%%%%%%%%%%%%%%%%%%%%%%%%%%%%%
%
%   SECTION : CONSTRUCTION OF A DENSE SUBSET OF THE LIMIT ROOTS FROM DIHEDRAL REFLECTION SUBGROUPS
%
%%%%%%%%%%%%%%%%%%%%%%%%%%%%%%%%%%%%%%%%%%%%%%%%%%%%%%%%%%%%%%

\section{Construction of a dense subset of the limit roots from dihedral reflection subgroups}
\label{sec:dense}

As in the previous sections, we fix an infinite based root system
$(\Phi,\DD)$ in $(V,B)$ with associated Coxeter system $(W,S)$,
assuming that $\Delta$ is a basis for $V$, and we denote by
$E=E(\Phi)$ the accumulation set of its normalized roots. In this
section, we construct a nice countable subset of $E$, easy to
describe, and we show its density in $E$.

%   Subsection : Main Theorem: $E$ is obtained from dihedral reflection subgroups
%
%%%%%%%%%%%%%%%%%%%%%%%%%%%%%%%%%%%%%%%%%%%%%%%%%%%%%%%%%%%%%%

\subsection{Main Theorem: $E$ is obtained from dihedral reflection subgroups}
~

Consider two distinct positive roots $\rho_1$ and $\rho_2$, and define the dihedral reflection subgroup
$W':=\left<s_{\rho_1}, s_{\rho_2}\right>$ and
\[
\Phi':= \{\rho \in \Phi ~|~ s_{\rho} \in W' \} \ .
\]
>From Proposition~\ref{prop:dihedral}, we know that $\Phi'$ is a based
root system of rank $2$, associated to the dihedral group $W'$. Denote
by $\alpha, \beta$ its simple roots, and suppose
that~$B(\alpha,\beta)\leq -1$, i.e., that $W'$ is infinite and the
plane $\Span(\rho_1,\rho_2)$ intersects~$Q\setminus \{0\}$ (see
Proposition~\ref{prop:dihedral}~(ii)(a)). Similarly, the line
$L(\h{\rho_1},\h{\rho_2})$ intersects~$\h{Q}$ inside~$V_1$. Denote by
$u,v$ the elements of the intersection $L(\h{\rho_1},\h{\rho_2})\cap
\h{Q}$ (where $u=v$ if and only if ${{B(\alpha,\beta)=-1}}$). Because
of the rank $2$ picture (see Examples \ref{ex:rk2} and
\ref{ex:rk2order} and \S\ref{subsec:examplesE}), we know that the set
$E(\Phi')$ of limit points of normalized roots of $W'$ is equal to the
finite set $\{u,v \}$. This leads to the following natural definition.

\begin{defi}
  Let $E_2$ be the subset of $E$ formed by the union of the
  sets~$E(\Phi')$, where~$(\Phi',\Delta')$ is any based root subsystem
  of rank $2$ in $(\Phi,\Delta)$. Equivalently,
  \[
  E_2:= \bigcup_{\rho_1, \rho_2 \in \pp } L(\h{\rho_1},\h{\rho_2})\cap \h{Q} \ ,
  \]
  where $L(\h{\rho_1},\h{\rho_2})$ denotes the line containing
  $\h{\rho_1}$ and $\h{\rho_2}$.
\end{defi}

\begin{figure}[!ht]
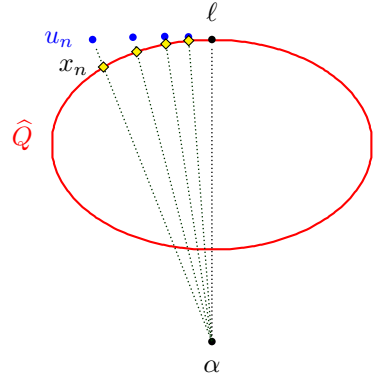

\begin{minipage}[b]{.49\linewidth}
\centering
\captionsetup{width=\textwidth}
\scalebox{0.60}{
%%%%%%%%%%%%%%%%%%%%%%%%%%%%%%%%%%%%%%%%%%%%%%%%%%%%%%%%
%\input{Img_Eprime444.tex}

% [inline block 2: 2 envs, 27726 chars -> data_tex | \begin{tikzpicture} 	[scale=2,...]


%%%%%%%%%%%%%%%%%%%%%%%%%%%%%%%%%%%%%%%%%%%%%%%%%%%%%%%%
}
\caption{Construction of a sequence of elements of~$E_2$ ($x_n$, in yellow diamonds) from a sequence  of elements of~$\hpp$
($u_n$, in blue), both converging to~${\ell \in E}$.}
\label{fig:proof}
\end{minipage}
\end{figure}

Note that $E_2$ is countable, and geometrically easier to describe
than the whole set $E$. Indeed, it is constructed from $\Phi$ with
intersections rather than with limits. Moreover, $E_2=E$ when
$(\Phi,\Delta)$ is affine, or of rank~$2$. Figure~\ref{fig:E'} gives
an example of construction of some points of $E_2$. Surprisingly, $E_2$ still carries all the information of $E$, as implied by the following theorem.

\begin{theo} \label{thm:density}
Let $\Phi$ be an (infinite) based root system, $E$ the set of limit points of its normalized roots. Then the union $E_2$ of all limit
roots arising from dihedral reflection subgroups is dense in $E$.
\end{theo}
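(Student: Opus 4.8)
The plan is to establish the two inclusions $E_2\subseteq E$ (built into the definition of $E_2$) and $E\subseteq\overline{E_2}$, the second being the real content. Since an accumulation set is always closed, $E$ is closed, so $\overline{E_2}\subseteq E$, and the theorem reduces to $E\subseteq\overline{E_2}$. First I would reduce to the irreducible case: by Proposition~\ref{prop:irred} a reducible system splits as $E=E(\Phi_I)\sqcup E(\Phi_J)$, and two roots lying in orthogonal components $\Delta_I,\Delta_J$ satisfy $B(\rho_1,\rho_2)=0$, so by Proposition~\ref{prop:dihedral} the dihedral subgroup they generate is finite and $L(\h{\rho_1},\h{\rho_2})\cap\h Q=\varnothing$. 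Hence cross-component pairs contribute nothing to $E_2$, which therefore also splits as $E_2(\Phi_I)\sqcup E_2(\Phi_J)$ along the disjoint compact pieces $\conv(\Delta_I),\conv(\Delta_J)$; density in $E$ then reduces to density in each component. A finite irreducible component has $E=\varnothing$, and an affine one satisfies $E_2=E$ (as already noted); so it remains to treat an irreducible, infinite, non-affine component.

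Fix such a component, let $\ell\in E$, and choose an injective sequence $(\rho_n)$ in $\pp$ with $\h{\rho_n}\to\ell$. The core of the argument is the secant construction of Figure~\ref{fig:proof}. Suppose for the moment we can find a simple root $\sigma$ with $B(\sigma,\ell)\neq 0$ (note $\h\sigma=\sigma$ since $|\sigma|_1=1$). Because $B(\sigma,\h{\rho_n})\to B(\sigma,\ell)\neq0$ while $|\rho_n|_1\to\oo$ by Theorem~\ref{thm:inclusion}, we get $|B(\sigma,\rho_n)|=|\rho_n|_1\,|B(\sigma,\h{\rho_n})|\to\oo$; so for $n$ large the dihedral subgroup $\langle s_\sigma,s_{\rho_n}\rangle$ is infinite and $L(\h\sigma,\h{\rho_n})$ meets $\h Q$ in two distinct points (Proposition~\ref{prop:dihedral}~(ii)). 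These points are exactly $E(\Phi')$ for the rank~$2$ subsystem generated by $s_\sigma,s_{\rho_n}$, hence lie in $E_2$. To see that one of them tends to $\ell$, consider the quadratic
\begin{equation*}
P_n(t)=q\bigl((1-t)\h{\rho_n}+t\,\sigma\bigr),
\end{equation*}
whose roots in $t$ parametrize $L(\h\sigma,\h{\rho_n})\cap\h Q$; its coefficients converge to those of $P_\infty(t)=q\bigl((1-t)\ell+t\,\sigma\bigr)$. Since $\ell\in E\subseteq D$ (Proposition~\ref{prop:Estable}) and $B(\sigma,\ell)\neq0$, Proposition~\ref{prop:actiongeo}~(b) gives that $L(\h\sigma,\ell)$ meets $\h Q$ in the two distinct points $\ell$ and $s_\sigma\cdot\ell$, i.e. $P_\infty$ has the two distinct roots $t=0$ and some $t^\ast\neq0$. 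Convergence of coefficients then forces one root $t_n$ of $P_n$ to tend to $0$, so the corresponding intersection point $x_n=(1-t_n)\h{\rho_n}+t_n\sigma$ satisfies $x_n\to\ell$ with $x_n\in E_2$; thus $\ell\in\overline{E_2}$.

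The point left to justify, and the main obstacle, is the existence of $\sigma$ with $B(\sigma,\ell)\neq0$. As $\Delta$ is a basis, no such $\sigma$ exists exactly when $B(\alpha,\ell)=0$ for all $\alpha\in\Delta$, i.e. when $\ell\in\Vp$; this is delicate because then every line through $\ell$ and a root is tangent to $\h Q$ and the secant construction collapses. I would rule this out via the Perron--Frobenius structure of the Gram matrix. Write $B=I-A$ where $A\geq0$ is irreducible (the component is irreducible) with zero diagonal, and let $\lambda$ be its Perron--Frobenius eigenvalue with entrywise positive eigenvector $\phi$. Then $\lambda<1$ gives $B$ positive definite (finite case), $\lambda=1$ gives $B$ positive semidefinite with one-dimensional radical (affine case), and in the remaining infinite non-affine case $\lambda>1$. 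In that case $\Vp=\ker(A-I)$ is an eigenspace of the symmetric matrix $A$ for the eigenvalue $1\neq\lambda$, hence orthogonal to $\phi$; so every nonzero radical vector has coordinates of mixed sign and cannot lie in $\conv(\Delta)$. Since $E\subseteq\conv(\Delta)$ by Theorem~\ref{thm:inclusion}, no $\ell\in E$ lies in $\Vp$, and a suitable simple $\sigma$ always exists.

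Combining these, every $\ell\in E$ lies in $\overline{E_2}$; together with $E_2\subseteq E$ and the closedness of $E$ this yields $\overline{E_2}=E$, i.e. $E_2$ is dense in $E$. I expect the Perron--Frobenius step to be the most delicate part to write carefully (verifying the trichotomy in $\lambda$ and the sign statement for radical vectors), whereas the secant argument is a direct continuity computation once Propositions~\ref{prop:dihedral} and~\ref{prop:actiongeo} are in hand.
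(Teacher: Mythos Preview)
Your proposal is correct and follows essentially the same route as the paper: reduction to the irreducible case, the secant construction through a simple root $\sigma$ with $B(\sigma,\ell)\neq0$ using continuity of the roots of the quadratic $P_n$, and the Perron--Frobenius argument to exclude $\ell\in\Vp$ in the non-affine case (this is exactly the paper's Proposition~\ref{prop:noninter}, cited there to Krammer). The only organizational difference is that the paper packages the secant step as density of the smaller set $E_2^\circ$ (Theorem~\ref{thm:density0}) and handles the irreducible affine case by invoking Lemma~\ref{lem:rk2finite} to ensure $E_2^\circ\neq\varnothing$, whereas you dispose of the affine case by citing the earlier remark that $E_2=E$ there; both are fine.
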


\begin{remark}
  Since $E$ is a limit set, it is closed, so the theorem implies that
  $E$ is the closure of $E_2$. Note also that if we define similarly
  $E_k$ as the subset of limit points arising from rank~$k$ reflection
  subgroups (with $k \geq 2$), then $E_k$ is dense in~$E$ too (since
  it contains $E_2$). However, this is not true if we consider only
  parabolic subgroups. For instance, any proper parabolic subgroup of
  an irreducible affine Coxeter group is finite and therefore does not
  provide any limit point (contradicting Corollary~\ref{rem:affine}).
\end{remark}

It turns out that an even smaller subset than $E_2$ is enough to
recover $E$. Define~$E^\circ_2$ by using the lines passing through a
normalized root and a \emph{simple} root:
\[
E_2^\circ:= \bigcup_{\substack{\alpha \in \Delta \\\rho \in \pp} } L(\alpha,\h{\rho})\cap \h{Q} \subseteq E_2 .
\]

\begin{prop}
The set $E_2$ is the $W$-orbit of $E_2^\circ$ for the action defined
in \S\ref{subsec:action}:
\[
E_2=W\cdot E_2^\circ.
\]
\end{prop}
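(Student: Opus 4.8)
The plan is to prove the two inclusions separately, using the geometric action of \S\ref{subsec:action} together with the depth induction already exploited in \S\ref{subsec:limit}. The starting observation, recorded in \S\ref{subsec:ex}, is that $\h\rho=\h{-\rho}$, so that for any root $\sigma$ the normalized point $\h\sigma$ is the normalized root of a (unique) positive root; this will let me move images of roots under $W$ back into $\pp$ for free.

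For the inclusion $W\cdot E_2^\circ\subseteq E_2$, I would first check that $E_2$ is stable under the action of $W$. Given $x\in E_2$, write $x\in L(\h{\rho_1},\h{\rho_2})\cap\h Q$ with $\rho_1,\rho_2\in\pp$, and let $w\in W$. By Proposition~\ref{prop:actiongeo}~(ii) together with the identity $w\cdot\h\rho=\h{w(\rho)}$, one has $w\cdot x\in L(\h{w(\rho_1)},\h{w(\rho_2)})\cap\h Q$; since each $\h{w(\rho_i)}$ is the normalized root of a positive root, this line is of the type allowed in the definition of $E_2$, so $w\cdot x\in E_2$. As $E_2^\circ\subseteq E_2$ by construction, this gives $W\cdot E_2^\circ\subseteq W\cdot E_2=E_2$.

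For the reverse inclusion $E_2\subseteq W\cdot E_2^\circ$, take $x\in E_2$, so that $x\in E(\Phi')$ for some rank~$2$ based root subsystem $(\Phi',\Delta')$ with simple system $\Delta'=\{\alpha',\beta'\}$, $B(\alpha',\beta')\leq -1$, and $E(\Phi')=\h Q\cap L(\h{\alpha'},\h{\beta'})$. I would argue by induction on $m(\Phi'):=\min(\dep(\alpha'),\dep(\beta'))$ that $E(\Phi')\subseteq W\cdot E_2^\circ$. If $m(\Phi')=1$, one simple root, say $\alpha'$, lies in $\Delta$, whence $\h{\alpha'}=\alpha'$ and $E(\Phi')=\h Q\cap L(\alpha',\h{\beta'})\subseteq E_2^\circ$. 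If $m(\Phi')\geq 2$, let $\alpha'$ realize the minimum, of depth $m$. By the depth-reduction property recalled after \cite[Lemma~4.6.2]{bjorner-brenti} in \S\ref{subsec:limit}, there is $\gamma\in\Delta$ with $B(\gamma,\alpha')>0$ and $\dep(s_\gamma(\alpha'))=m-1$. Since $\alpha',\beta'$ have depth $\geq 2$ they differ from $\gamma$, so $s_\gamma(\alpha'),s_\gamma(\beta')\in\pp$ and $B(s_\gamma(\alpha'),s_\gamma(\beta'))=B(\alpha',\beta')\leq -1$; hence by Proposition~\ref{prop:dihedral}~(ii)(c) the reflected subsystem $s_\gamma(\Phi')$ is again a rank~$2$ based root subsystem with simple system $\{s_\gamma(\alpha'),s_\gamma(\beta')\}$, and $m(s_\gamma(\Phi'))\leq\dep(s_\gamma(\alpha'))=m-1<m$. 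The inductive hypothesis gives $E(s_\gamma(\Phi'))\subseteq W\cdot E_2^\circ$; and Proposition~\ref{prop:actiongeo}~(ii) applied to $s_\gamma$ shows that $s_\gamma$ maps $E(\Phi')$ bijectively onto $E(s_\gamma(\Phi'))$, so $E(\Phi')=s_\gamma\cdot E(s_\gamma(\Phi'))\subseteq W\cdot E_2^\circ$, and in particular $x\in W\cdot E_2^\circ$.

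The main obstacle is the choice of induction parameter. Reducing the depth of one simple root of $\Phi'$ by a simple reflection $s_\gamma$ may \emph{increase} the depth of the other (precisely when $B(\gamma,\alpha')>0$ but $B(\gamma,\beta')<0$), so an induction on $\dep(\alpha')+\dep(\beta')$ need not make progress. Inducting on the minimum of the two depths circumvents this, since $\dep(s_\gamma(\alpha'))=m-1$ forces $m(s_\gamma(\Phi'))<m$ irrespective of the behaviour of $\beta'$. The remaining point requiring care is to confirm, via Proposition~\ref{prop:dihedral}~(ii)(c), that the reflected pair $\{s_\gamma(\alpha'),s_\gamma(\beta')\}$ is genuinely the canonical simple system of $s_\gamma(\Phi')$ — this is exactly where the hypothesis $B(\alpha',\beta')\leq -1$ is used.
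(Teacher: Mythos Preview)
Your proof is correct, but it takes a considerably longer route than the paper's. The paper dispatches the nontrivial inclusion $E_2\subseteq W\cdot E_2^\circ$ in one line: given $x\in L(\h{\rho_1},\h{\rho_2})\cap\h Q$, use $\Phi=W(\Delta)$ to pick a single $w\in W$ with $w(\rho_1)\in\Delta$, and then Proposition~\ref{prop:actiongeo}~(ii) gives $w\cdot x\in L(w(\rho_1),\h{w(\rho_2)})\cap\h Q\subseteq E_2^\circ$ immediately. Your depth induction is effectively reconstructing such a $w$ as a product of simple reflections, tracking the second root along the way; and your preliminary passage to the canonical simple system $\{\alpha',\beta'\}$ of the dihedral subgroup (together with the invocation of Proposition~\ref{prop:dihedral}~(ii)(c) to ensure this simple system is preserved under each $s_\gamma$) is machinery the paper never needs, since it works directly with an arbitrary pair $\rho_1,\rho_2$. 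What your approach buys is self-containment---you never appeal to the global fact $\Phi=W(\Delta)$---but at the cost of the extra bookkeeping you yourself flag as the ``main obstacle''.
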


\begin{proof}
Let $x\in E_2$ and $\rho_1,\rho_2\in\Phi$ such that $x\in L(\h{\rho_1},\h{\rho_2})$. Since $\Phi=W(\Delta)$, there is $w$ in $W$
such that $\alpha:=w(\rho_1)\in \Delta$. By Proposition~\ref{prop:actiongeo}~(ii), we have that~${w\cdot x\in  L(\h{\alpha},\h{w(\rho_2)})\cap Q}$, which proves that $w\cdot x\in E_2^\circ.$
\end{proof}

Theorem~\ref{thm:density} is in fact a direct consequence of the following stronger property.

\begin{theo} \label{thm:density0}
Let $\Phi$ be an (infinite) based root system, $E$ its set of limit points of normalized roots, and $E_2^\circ$ the subset of $E$
defined above. Then $E_2^\circ$ is dense in $E$.
\end{theo}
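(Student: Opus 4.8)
The plan is to fix a limit root $\ell\in E$ together with an injective sequence of positive roots $(\rho_n)$ with $\h{\rho_n}\to\ell$ (such a sequence exists by definition of $E$, and by Theorem~\ref{thm:inclusion} we have $\ell\in\h Q$ and $|\rho_n|_1\to\oo$). Given $\eps>0$ I want to produce a point of $E_2^\circ$ within $\eps$ of $\ell$. The idea, illustrated in Figure~\ref{fig:proof}, is to fix one simple root $\alpha\in\DD$ and intersect the lines $L(\alpha,\h{\rho_n})$ with $\h Q$: I will show that for a well-chosen $\alpha$ one of the intersection points $x_n\in L(\alpha,\h{\rho_n})\cap\h Q\subseteq E_2^\circ$ converges to $\ell$.

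The computation is the one already used in Proposition~\ref{prop:actiongeo}. Parametrizing the line by $u_\lambda=\lambda\alpha+(1-\lambda)\h{\rho_n}$ and using $q(\alpha)=1$, the equation $q(u_\lambda)=0$ reads
\[
\big(1-2B(\alpha,\h{\rho_n})+q(\h{\rho_n})\big)\lambda^2+2\big(B(\alpha,\h{\rho_n})-q(\h{\rho_n})\big)\lambda+q(\h{\rho_n})=0 .
\]
Here the constant term is $q(\h{\rho_n})=1/|\rho_n|_1^2\to 0$ and the linear coefficient tends to $2B(\alpha,\ell)$. Hence, provided $B(\alpha,\ell)\neq0$, this quadratic has a root $\lambda_n\to 0$ (with $\lambda_n\sim -q(\h{\rho_n})/2B(\alpha,\ell)$), and the corresponding point $x_n=\lambda_n\alpha+(1-\lambda_n)\h{\rho_n}$ lies in $V_1$, satisfies $q(x_n)=0$, hence lies in $L(\alpha,\h{\rho_n})\cap\h Q\subseteq E_2^\circ$, and converges to $\ell$. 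Continuity of $q$ and of the normalization map makes all this precise.

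So the whole argument hinges on finding a simple root $\alpha$ with $B(\alpha,\ell)\neq0$. Since $\DD$ is a basis of $V$, such an $\alpha$ fails to exist exactly when $\ell\in\Vp=\operatorname{rad}(B)$; in particular, whenever $B$ is nondegenerate (which covers all the hyperbolic rank~$3$ and rank~$4$ examples of \S\ref{subsec:ex}) the argument above already proves the theorem.

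The main obstacle is therefore the degenerate case $\ell\in\Vp$. Here $\ell$ is fixed pointwise by $W$, because $s_\alpha(v)=v-2B(\alpha,v)\alpha=v$ for every $v\in\Vp$, so $w\cdot\ell=\ell$ for all $w\in W$. I would first reduce to the irreducible case using Proposition~\ref{prop:irred} (limit roots of distinct irreducible components live in disjoint compact faces $\conv(\DD_I)$, and $E_2^\circ$ of a component sits inside $E_2^\circ$ of the whole system), so that it suffices to approximate $\ell$ from $E_2^\circ(\Phi_I)$. For an irreducible component two situations occur. If some fixed simple $\alpha$ has $|B(\alpha,\rho_n)|\geq1$ for infinitely many $n$, then the subgroups $\langle s_\alpha,s_{\rho_n}\rangle$ are infinite (Proposition~\ref{prop:dihedral}~(ii)(a)), the lines $L(\alpha,\h{\rho_n})$ do meet $\h Q$, and since the limiting line $L(\alpha,\ell)$ is tangent to $\h Q$ at $\ell$ the quadratic above (now with vanishing linear term) has both roots tending to $0$, so both intersection points—elements of $E_2^\circ$—converge to $\ell$. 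Otherwise $B(\alpha,\rho_n)$ stays bounded for every simple $\alpha$ (Proposition~\ref{prop:dihedral}~(i)), which forces $\rho_n$ to escape to infinity along $\Vp$: this is precisely the affine situation, where by Corollary~\ref{rem:affine} $E=\h Q$ is the single point $\ell$ and $E_2=E$. In that case $\ell\in E_2=W\cdot E_2^\circ$ by the proposition stating $E_2=W\cdot E_2^\circ$, so $\ell=w\cdot x$ with $x\in E_2^\circ$; applying $w^{-1}$ and using that $\ell$ is $W$-fixed gives $x=w^{-1}\cdot\ell=\ell$, whence $\ell\in E_2^\circ$. The delicate point to pin down rigorously is exactly this dichotomy: ensuring that the only way all simple-root lines can fail to meet $\h Q$ is the radical escape, and that every residual degenerate configuration is genuinely covered by the affine reduction together with the $W$-invariance of $\ell$.
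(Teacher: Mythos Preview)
Your treatment of the generic case $\ell\notin\Vp$ is correct and essentially identical to the paper's: choose $\alpha\in\DD$ with $B(\alpha,\ell)\neq 0$, solve the quadratic on $L(\alpha,\h{\rho_n})$, and let the root closest to $\h{\rho_n}$ converge to $\ell$. This is exactly the content of the lemma in~\S4.2.

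The gap is in your handling of $\ell\in\Vp$. After reducing to the irreducible case, you split into Case~A ($|B(\alpha,\rho_n)|\geq 1$ infinitely often for some $\alpha$) and Case~B (otherwise). Your Case~A computation is fine: both roots of the quadratic tend to~$0$, so both intersection points tend to~$\ell$. But in Case~B you assert that bounded $B(\alpha,\rho_n)$ ``is precisely the affine situation''---and this is exactly the step that needs a real argument. What you need is Proposition~\ref{prop:noninter}: for an irreducible non-affine based root system, $\Vp\cap\cone(\DD)=\{0\}$. This is proved via the Perron--Frobenius theorem (the paper cites Krammer's Lemma~6.1.1). Since $\ell\in\Vp\cap\conv(\DD)$ is a nonzero element of $\Vp\cap\cone(\DD)$, this proposition forces the system to be affine. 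Your dichotomy does not circumvent this: nothing in Case~B tells you the signature of $B$ is $(n-1,0)$ rather than, say, $(n-2,1)$ with a radical that happens to meet $\cone(\DD)$---it is precisely Proposition~\ref{prop:noninter} that rules this out.

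Once you know the system is affine, the paper's endgame is slightly simpler than yours. Since $E$ is then a singleton (Corollary~\ref{rem:affine}), it suffices to show $E_2^\circ\neq\varnothing$, equivalently $E_2\neq\varnothing$, equivalently that $W$ contains an infinite dihedral reflection subgroup; this is Lemma~\ref{lem:rk2finite}. Your route via $E_2=W\cdot E_2^\circ$ and the $W$-invariance of $\ell\in\Vp$ is a nice touch, but it also rests on $E_2\neq\varnothing$, which you do not justify. In short: the missing ingredients are Proposition~\ref{prop:noninter} and Lemma~\ref{lem:rk2finite}; with those in hand, your Case~A/B split becomes unnecessary.
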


The rest of this section is devoted to the proof of
Theorem~\ref{thm:density0}. In regard to Proposition~\ref{prop:irred},
we assume that the based root system $(\Phi,\Delta)$, and the Coxeter
system $(W,S)$, are irreducible.

Consider an element $\ell$ of $E$. By definition, there exists a sequence of normalized roots $(u_n)_{n\in \mathbb{N}} \subseteq \hpp$
converging to $\ell$. We want to construct, from $(u_n)$, a new sequence of elements of $E_2^\circ$ converging to $\ell$ as well.
The geometric idea is quite simple, as illustrated in Figure~\ref{fig:proof}: we construct $x_n$ as the ``right'' intersection point of
the line $L(\alpha,u_n)$ with $Q$, for an $\alpha \in \Delta$ well chosen. The core of the proof is to show that it is always possible
to find such a simple root that makes the construction work.

We proceed in two steps, detailed in the next subsections.

%   Subsection : First step of proof: when the limit root is outside the radical
%
%%%%%%%%%%%%%%%%%%%%%%%%%%%%%%%%%%%%%%%%%%%%%%%%%%%%%%%%%%%%%%

\subsection{First step of proof: when the limit root is outside the radical}
~

Denote by $\Vp=\{v\in V\,|\, B(v,u)=0,\ \forall u \in V\}$ the radical of the bilinear form~$B$.

\begin{prop}  \label{prop:EVperp}
Let $\ell \in E$. If $\ell$ is not in the radical $\Vp$, then $\ell$ is in the closure of ${E_2^\circ}$.
\end{prop}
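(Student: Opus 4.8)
The plan is to produce $\ell$ as a limit of points of $E_2^\circ$, following the geometric idea sketched in Figure~\ref{fig:proof}: through one fixed, well-chosen simple root $\alpha$ and the normalized roots $u_n$ approaching $\ell$, the lines $L(\alpha,u_n)$ should meet $\h Q$ at points $x_n$ converging to $\ell$. The hypothesis $\ell\notin\Vp$ is used precisely to choose $\alpha$: since $\ell$ is not in the radical, the linear form $B(\ell,\cdot)$ is nonzero, and as $\Delta$ is a basis of $V$ there must be some $\alpha\in\Delta$ with $B(\alpha,\ell)\neq 0$; I would fix such an $\alpha$ once and for all. By Definition~\ref{def:E} I would then take an injective sequence of normalized roots $u_n=\h{\rho_n}\in\hpp$ with $u_n\to\ell$. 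Injectivity forces $\|\rho_n\|\to\oo$ by Theorem~\ref{thm:inclusion}(i), hence $q(u_n)=1/|\rho_n|_1^2\to 0$ (as in the proof of that theorem), while continuity of $B(\alpha,\cdot)$ gives $B(\alpha,u_n)\to B(\alpha,\ell)$. Finally $u_n\neq\alpha$ for $n$ large, since $\ell\in Q$ but $\alpha\notin Q$, so each line $L(\alpha,u_n)$ is well defined.

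Next I would parametrize this line by $v_\lambda=\lambda\alpha+(1-\lambda)u_n$ and compute, using $q(\alpha)=1$,
\[
q(v_\lambda)=a_n\lambda^2+b_n\lambda+c_n,
\]
where $a_n=1-2B(\alpha,u_n)+q(u_n)$, $b_n=2B(\alpha,u_n)-2q(u_n)$, and $c_n=q(u_n)$. The intersection points of $L(\alpha,u_n)$ with $Q$ correspond exactly to the real roots of this quadratic, and from the limits above one has $a_n\to 1-2B(\alpha,\ell)$, $b_n\to 2B(\alpha,\ell)\neq 0$ and $c_n\to 0$. Everything reduces to exhibiting a root $\lambda_n$ of $q(v_\lambda)=0$ with $\lambda_n\to 0$: the corresponding point $x_n:=v_{\lambda_n}=\lambda_n\alpha+(1-\lambda_n)u_n$ then satisfies $x_n\to\ell$, and it lies on $L(\alpha,\h{\rho_n})\subseteq V_1$ and on $Q$, so that $x_n\in L(\alpha,\h{\rho_n})\cap\h Q\subseteq E_2^\circ$, whence $\ell\in\overline{E_2^\circ}$ as required.

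The extraction of the small root is the only delicate point, since the leading coefficient $a_n$ may degenerate in the limit (namely when $B(\alpha,\ell)=\tfrac12$), so I would avoid the explicit quadratic formula and argue instead by the intermediate value theorem on $g_n(\lambda)=a_n\lambda^2+b_n\lambda+c_n$. For every $n$ one has $g_n(0)=c_n>0$, while $g_n'(0)=b_n\to 2B(\alpha,\ell)\neq 0$, so near $0$ the linear term controls the sign: fixing a small $\eps>0$, for all large $n$ the value $g_n(-\eps)$ (when $B(\alpha,\ell)>0$) or $g_n(+\eps)$ (when $B(\alpha,\ell)<0$) is negative, opposite to $g_n(0)>0$. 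The intermediate value theorem then yields a root with $|\lambda_n|<\eps$; letting $\eps=1/k$ and choosing the indices diagonally produces a genuine root $\lambda_n\to 0$. Since this argument invokes only $b_n\to 2B(\alpha,\ell)\neq 0$ and $c_n\to 0$, and is insensitive to the behaviour of $a_n$, it covers the degenerate case as well, completing the construction and hence the proof of Proposition~\ref{prop:EVperp}.
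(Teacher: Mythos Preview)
Your proof is correct and follows essentially the same strategy as the paper's: pick $\alpha\in\Delta$ with $B(\alpha,\ell)\neq 0$ (possible since $\Delta$ spans $V$ and $\ell\notin V^\perp$), and show that the line $L(\alpha,\h{\rho_n})$ meets $\h Q$ in a point converging to $\ell$. The only difference is in the extraction of that intersection point: the paper writes down the explicit quadratic formula for $\lambda_\pm(x)$, observes it is continuous on a neighbourhood of $\ell$ because the discriminant $B(\alpha,x)^2-q(x)$ is positive there, and chooses the branch $u_\eps$ with $u_\eps(\ell)=\ell$ (using the sign of $B(\alpha,\ell)$ via Proposition~\ref{prop:visible}); you bypass the formula entirely with an intermediate value argument on $g_n$.

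Your worry about the degenerate case $B(\alpha,\ell)=\tfrac12$ (leading coefficient $a_n\to 0$) is in fact moot: by Proposition~\ref{prop:D+} one has $s_\alpha(\ell)\in V_0^+$, so $|s_\alpha(\ell)|_1=1-2B(\alpha,\ell)>0$, whence $q(\alpha-\ell)=1-2B(\alpha,\ell)>0$ and the quadratic stays genuinely quadratic in a neighbourhood of $\ell$. So the paper's explicit-formula route is safe; your IVT route is a perfectly good alternative that simply does not need this observation.
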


This proposition is a consequence of the following construction.

\begin{lemma}
  Let $(\rho_n)_{n\in \mathbb{N}}$ be a sequence in $\Phi^+$, such that $\h{\rho_n}$ converges to a limit~$\ell$ in $E$. Suppose that
  there exists $\alpha$  in $\DD$, such that $B(\alpha,\ell) \neq 0$. Then, there exists  a   sequence~$(x_n)_{n\in \mathbb N}$
  converging to $\ell$, such that, for any $n$ large enough, $x_n$ lies in the intersection~$L(\alpha,\h{\rho_n})\cap Q$.
\end{lemma}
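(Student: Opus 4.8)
The plan is to produce $x_n$ as an explicit intersection point of the line $L(\alpha,\h{\rho_n})$ with $Q$, obtained by solving a single quadratic equation whose coefficients I can control as $n\to\oo$. First I would parametrize the line: for $\lambda\in\mathbb R$ set $u_\lambda^{(n)}:=\lambda\alpha+(1-\lambda)\h{\rho_n}$, so that $u_0^{(n)}=\h{\rho_n}$, $u_1^{(n)}=\alpha$, and every point of $L(\alpha,\h{\rho_n})$ has this form. Since $\alpha$ and $\h{\rho_n}$ both lie in $V_1$, the whole line lies in $V_1$, hence $u_\lambda^{(n)}\in Q$ if and only if $u_\lambda^{(n)}\in\h Q$. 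Expanding $q(u_\lambda^{(n)})=0$ and using $q(\alpha)=B(\alpha,\alpha)=1$ (condition~(iii) of Definition~\ref{def:root}) yields, with $b_n:=B(\alpha,\h{\rho_n})$ and $c_n:=q(\h{\rho_n})$,
\[
(1-2b_n+c_n)\,\lambda^2+2(b_n-c_n)\,\lambda+c_n=0 .
\]

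The heart of the argument is to track the three coefficients $A_n:=1-2b_n+c_n$, $B_n:=2(b_n-c_n)$, $C_n:=c_n$ as $n\to\oo$. By continuity of $B$, $b_n\to B(\alpha,\ell)=:b\neq 0$; and as in the proof of Theorem~\ref{thm:inclusion}, $c_n=q(\h{\rho_n})=1/|\rho_n|_1^2\to 0$ (note $c_n>0$ since $q(\rho_n)=1$). Thus $A_n\to 1-2b$, $B_n\to 2b$, $C_n\to 0$, and the discriminant satisfies $B_n^2-4A_nC_n\to 4b^2>0$. So for $n$ large enough the intersection $L(\alpha,\h{\rho_n})\cap Q$ consists of two distinct real points, and I would take $x_n$ to be the one whose parameter $\lambda_n$ tends to $0$; then $x_n=u^{(n)}_{\lambda_n}=\lambda_n\alpha+(1-\lambda_n)\h{\rho_n}\to 0\cdot\alpha+1\cdot\ell=\ell$, as required. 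Since $\alpha\in\Delta$ and $\rho_n\in\Phi^+$, each such $x_n$ moreover lies in $L(\alpha,\h{\rho_n})\cap\h Q\subseteq E_2^\circ$.

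The main obstacle is the degeneracy $A_n\to 0$, which occurs exactly when $b=\tfrac12$: geometrically the line $L(\alpha,\ell)$ becomes tangent to $Q$ at $\ell$ and its second intersection point escapes to infinity, so the naive quadratic formula $\lambda=(-B_n\pm\sqrt{B_n^2-4A_nC_n})/(2A_n)$ is ill-behaved. I would circumvent this by using the rationalized form of the roots,
\[
\lambda_n=\frac{2C_n}{-B_n+\eps\sqrt{B_n^2-4A_nC_n}},
\]
choosing the sign $\eps\in\{+1,-1\}$ so that the denominator tends to $-2b+\eps\,2|b|\neq 0$ (namely $\eps=-\operatorname{sgn}(b)$). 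This expression is a genuine root of the quadratic for every $n$, and since its numerator $2C_n\to 0$ while its denominator stays bounded away from $0$, it satisfies $\lambda_n\to 0$ regardless of whether $A_n$ vanishes. This delivers the desired sequence $(x_n)$ and proves the lemma; Proposition~\ref{prop:EVperp} then follows, since $\ell\notin\Vp$ forces (after expanding in the basis $\Delta$) some simple root $\alpha$ with $B(\alpha,\ell)\neq 0$.
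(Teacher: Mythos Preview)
Your proof is correct and follows essentially the same route as the paper: parametrize $L(\alpha,\h{\rho_n})$ affinely, solve the resulting quadratic in $\lambda$, and select the root tending to $0$ so that $x_n\to\ell$ by continuity. The one substantive difference is your treatment of the degeneracy $A_n\to 0$ (i.e.\ $B(\alpha,\ell)=\tfrac12$): you bypass it elegantly with the rationalized root formula, whereas the paper uses the standard formula $\lambda_\pm(x)=\bigl(q(x)-B(\alpha,x)\pm\sqrt{B(\alpha,x)^2-q(x)}\bigr)/q(\alpha-x)$ and relies (implicitly, via Proposition~\ref{prop:Estable}) on the fact that $\ell\in E\subseteq D$ forces $|s_\alpha(\ell)|_1=1-2B(\alpha,\ell)\neq 0$, so that this degenerate case never occurs. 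Your argument is thus slightly more self-contained, but otherwise the two proofs coincide.
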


If $\ell \in E$ and $\ell \notin \Vp$, then we are in the hypothesis of the lemma since $\Delta$ spans~$V$. Moreover, by definition
of $E_2^\circ$, the constructed sequence $(x_n)$ eventually lies in~$E_2^\circ$. Thus, Proposition~\ref{prop:EVperp} follows
directly from the lemma, which we prove now.

\begin{proof}
Since $B(\alpha,\ell)\neq 0$, we get from Proposition~\ref{prop:actiongeo}~(ii) that $L(\alpha,\ell)\cap Q$ contains two points (see
Figure~\ref{fig:proof}). Consider an element $x\in V_1\setminus \{\alpha\}$, and let us find the points of $L(\alpha,x)\cap Q$. Take an element
$u_\lambda$ in the line $L(\alpha,x)$
\[
u_{\lambda}=\lambda \alpha + (1-\lambda)x, \textrm{ with }\lambda \in \mathbb{R}.
\]
Then a quick computation gives
\begin{equation}
 q(u_\lambda)= 0 \quad \Leftrightarrow \quad q(\alpha -x) \lambda^2
 +2B(x,\alpha - x)\lambda + q(x) = 0 \label{eq:binome}
 \end{equation}
 (where $q$ is the quadratic form of $B$). For $x=\ell$, we know that
Equation~\eqref{eq:binome} has two solutions for $\lambda$. So by
continuity, there exists a neighbourhood $\Omega_\ell$ of $\ell$ in
$V_1$, such that, for any $x\in \Omega_\ell$,
Equation~\eqref{eq:binome} has two solutions as well (and then
$q(\alpha-x)\not = 0$).  Computing these solutions in function of $x$,
we find, after simplification,
\[
\lambda_\pm (x)= \frac{q(x)-B(\alpha,x) \pm \sqrt{B(\alpha,x)^2 - q(x)}}{q(\alpha-x)} \ ,
\]
and the two intersection points of $L(\alpha,x)$ with $Q$ (for $x\in \Omega_\ell$) are given by $u_+ (x)$ and $u_- (x)$, where
\[
u_{\eps}(x)=\lambda_\eps (x) \alpha + (1-\lambda_\eps (x))x,\ \text{for }\eps=+ \text{ or } -.
\]
For $x=\ell$, the intersection points are obviously $\ell$ and $s_\alpha\cdot \ell$; so, in regard to Proposition~\ref{prop:visible},
either $u_{+}(\ell)=\ell$ if $B(\alpha,\ell)>0$ (that is, when $\ell$ is visible from~$\alpha$), or $u_{-}(\ell)=\ell$ if $B(\alpha,\ell)<0$
(that is, when $s_\alpha \cdot \ell$ is visible from $\alpha$). Define, for~${x\in \Omega_\ell}$,
\[
f_\alpha(x)= u_\eps (x)= \lambda_\eps (x) \alpha + (1-\lambda_\eps (x)) x\ ,
\]
where $\eps=+$ if $B(\alpha,\ell)>0$, and $\eps=-$ if $B(\alpha,\ell)<0$. Then,
\begin{itemize}
\item $f_{\alpha}(\ell)=\ell$;
\item $f_{\alpha}$ is a continuous map on $\Omega_\ell$;
\item for all $x\in \Omega_\ell$, $f_{\alpha}(x) \in L(\alpha,x)\cap Q$.
\end{itemize}

\noindent Now consider a sequence $(\rho_n)_{n\in \mathbb{N}}$ in $\pp$, such that $\h{\rho_n}$ converges to $\ell$. For $n$ large
enough, $\h{\rho_n}\in \Omega_{\ell}$, and we set $x_n = f_{\alpha}(\h{\rho_n})$. Then $x_n \in L(\alpha,\h{\rho_n})\cap  Q$,
and by continuity of $f_{\alpha}$, $x_n$ converges to $f_{\alpha}(\ell)=\ell$.
\end{proof}

%   Subsection : Second step of proof: when the limit root is inside the radical
%
%%%%%%%%%%%%%%%%%%%%%%%%%%%%%%%%%%%%%%%%%%%%%%%%%%%%%%%%%%%%%%

\subsection{Second step of proof: when the limit root is inside the radical}
~

To finish the proof, we only need to deal with the case where $\ell$
is in the radical~$V^\perp$ of the form $B$. Since $\ell$ is also in
$\cone(\DD)$, the following proposition, which is the same as~\cite[Lemma 6.1.1]{krammer:conjugacy},
 implies that this case can only happen when $\Phi$ is an affine based root system.

\begin{prop} \label{prop:noninter}
Let $\Phi$ be an (infinite) irreducible based root system. If $\Phi$ is not affine, then
\[
\Vp \cap \cone(\Delta) = \{0\} \ .
\]
\end{prop}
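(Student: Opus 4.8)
The plan is to translate the statement into linear algebra through the Gram matrix of the simple system and then invoke Perron--Frobenius theory. Since $\Delta$ is a basis of $V$, I encode $B$ by its Gram matrix $A=(B(\alpha,\beta))_{\alpha,\beta\in\Delta}$. Writing $v=\sum_{\alpha\in\Delta}v_\alpha\alpha$ and $\mathbf v=(v_\alpha)$, the identity $(A\mathbf v)_\gamma=B(v,\alpha_\gamma)$ shows that $v\in\Vp$ exactly when $A\mathbf v=0$ (because $B(v,\cdot)$ then vanishes on the spanning set $\Delta$), while $v\in\cone(\Delta)$ exactly when $\mathbf v\geq 0$ entrywise. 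I then set $A=I-N$, where $N_{\alpha\beta}=-B(\alpha,\beta)$ for $\alpha\neq\beta$ and $N_{\alpha\alpha}=0$. Conditions (ii)--(iii) of Definition~\ref{def:root} force all off-diagonal entries $-B(\alpha,\beta)$ to be nonnegative and the diagonal of $A$ to be $1$, so $N\geq 0$ entrywise, and $N$ is symmetric since $B$ is. Finally, irreducibility of $(\Phi,\Delta)$ says precisely that the graph joining $\alpha$ and $\beta$ whenever $B(\alpha,\beta)\neq 0$ is connected, i.e. that $N$ is an \emph{irreducible} nonnegative matrix.

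I would run the argument by contraposition: assume there is a nonzero $v\in\Vp\cap\cone(\Delta)$ and deduce that $\Phi$ must be affine. Its coordinate vector $\mathbf v\geq 0$ is nonzero and satisfies $A\mathbf v=0$, i.e. $N\mathbf v=\mathbf v$. Thus $1$ is an eigenvalue of $N$ admitting a nonnegative, nonzero eigenvector. By the Perron--Frobenius theorem for irreducible nonnegative matrices, the spectral radius $r=\rho(N)$ is the \emph{only} eigenvalue possessing a nonnegative eigenvector (which is moreover strictly positive), and, as $N$ is symmetric, $r$ is the largest eigenvalue of $N$. Hence $r=1$.

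Because $r=1$ is the largest eigenvalue of $N$, every eigenvalue $\mu$ of $N$ satisfies $\mu\leq 1$, so every eigenvalue $1-\mu$ of $A$ is nonnegative; therefore $A$, and with it $B$, is positive semidefinite. Since $A\mathbf v=0$ with $\mathbf v\neq 0$, the form $B$ is not definite, so by definition $(\Phi,\Delta)$ is affine, contradicting the hypothesis. This yields $\Vp\cap\cone(\Delta)=\{0\}$. (When $\Phi$ is finite the statement is anyway immediate, since then $B$ is positive definite and $\Vp=\{0\}$; all the content lives in the infinite non-affine case.)

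The step requiring the most care is the appeal to Perron--Frobenius: I must use the sharp form valid for \emph{irreducible} nonnegative matrices, namely that a nonnegative eigenvector can occur only at the spectral radius, rather than the weaker statement that $r$ itself is an eigenvalue with a nonnegative eigenvector. The symmetry of $N$ is the hinge that lets me identify $r$ with the top eigenvalue of $N$ and thereby pass from ``$1$ is an eigenvalue of $N$ with a nonnegative eigenvector'' to ``$B$ is positive semidefinite,'' which is exactly what converts the Perron--Frobenius conclusion into the affineness that produces the contradiction.
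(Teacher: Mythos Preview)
Your argument is correct and matches the paper's approach: the paper does not give its own proof but states that the result follows from the Perron--Frobenius theorem and refers to \cite[Lemma~6.1.1]{krammer:conjugacy}, which is precisely the Gram-matrix computation you carry out. Your write-up in fact supplies the details the paper omits, including the key point that a nonnegative eigenvector of an irreducible nonnegative matrix forces the eigenvalue to be the spectral radius.
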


The proof of this statement is an application of the Perron-Frobenius theorem; we refer to \cite[Lemma 6.1.1]{krammer:conjugacy} for details.
\smallskip

So we focus on the last remaining case:   $\Phi$ is an irreducible affine root
system. In this case, by Corollary~\ref{rem:affine}, $E$ is a
singleton, so it is sufficient to check that $E_2^\circ$ is
nonempty. But $E_2^\circ$ is empty if and only if $E_2$ is empty, if
and only if any reflection subgroup of $W$ of rank $2$ is finite; and
this cannot happen because of the following classical property.

\begin{lemma} \label{lem:rk2finite}
Let $W$ be a Coxeter group. If every reflection subgroup of rank $2$ of~$W$ is finite, then $W$ is finite.
\end{lemma}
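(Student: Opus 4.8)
The plan is to argue by contraposition: assuming every rank-$2$ reflection subgroup of $W$ is finite, I will show that the root system $\Phi$ of the classical geometric representation (\S\ref{subsec:cox}) is finite, whence $W$ is finite. I work in that representation, so that $\Delta$ is a basis of $V$ and every root satisfies $B(\rho,\rho)=1$. The first step is to translate the hypothesis into a condition on the form. Any rank-$2$ reflection subgroup has the shape $\langle s_{\rho_1},s_{\rho_2}\rangle$ with $\rho_1\ne\rho_2\in\Phi^+$, and by Proposition~\ref{prop:dihedral}(ii)(a) it is finite exactly when $|B(\rho_1,\rho_2)|<1$. Hence the hypothesis says precisely that
\[
|B(\rho_1,\rho_2)|<1 \quad\text{for all distinct } \rho_1,\rho_2\in\Phi^+.
\]

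The key observation --- and the step I expect to carry the whole argument --- is that this condition forces the ``Gram tuple'' of a root against the simple roots to take only finitely many values. Indeed, Proposition~\ref{prop:dihedral}(i) tells us that the set $F:=\{B(\alpha,\rho)\mid \alpha\in\Delta,\ \rho\in\Phi^+,\ |B(\alpha,\rho)|<1\}$ is finite. Under the displayed condition, for every $\alpha\in\Delta$ and $\rho\in\Phi^+$ we have $B(\alpha,\rho)\in F\cup\{1\}$: either $\rho=\alpha$, in which case $B(\alpha,\rho)=1$, or $\rho\ne\alpha$ are distinct positive roots and $|B(\alpha,\rho)|<1$, so $B(\alpha,\rho)\in F$. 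Therefore the map $\Gamma\colon\Phi^+\to(F\cup\{1\})^{\Delta}$, $\rho\mapsto(B(\alpha,\rho))_{\alpha\in\Delta}$, has finite image.

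The conclusion is then a pigeonhole argument. Suppose $W$, equivalently $\Phi^+$, were infinite. Since $\Gamma$ has finite image, there are two distinct positive roots $\rho\ne\rho'$ with $\Gamma(\rho)=\Gamma(\rho')$, i.e.\ $B(\alpha,\rho-\rho')=0$ for every $\alpha\in\Delta$. As $\Delta$ is a basis of $V$, this means $\rho-\rho'\in V^\perp$, the radical of $B$. But then $B(\rho,\rho')=B(\rho,\rho)+B(\rho,\rho'-\rho)=1+0=1$, so $|B(\rho,\rho')|\ge 1$ and $\langle s_\rho,s_{\rho'}\rangle$ is an infinite rank-$2$ reflection subgroup by Proposition~\ref{prop:dihedral}(ii)(a), contradicting the hypothesis. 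Hence $\Phi^+$ is finite and $W$ is finite.

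I would emphasize that the pleasant feature of this route is that it needs no case split according to whether $W$ is finite, affine, or of higher-rank indefinite type: the uniform finiteness furnished by Proposition~\ref{prop:dihedral}(i) does all the work at once, and the degenerate (affine) behaviour of $B$ is exactly what allows $\rho$ and $\rho'$ to differ by a nonzero radical vector while both remaining roots. The only point requiring care is the bookkeeping that every value $B(\alpha,\rho)$ genuinely lands in $F\cup\{1\}$, including the diagonal case $\rho=\alpha$; once that is secured, the finiteness of $\Gamma(\Phi^+)$ and the pigeonhole step are immediate. (An alternative, using limit roots, would pick $\ell\in E$ and a simple root $\alpha$ with $B(\alpha,\ell)\ne0$ to make $|B(\alpha,\rho_n)|=|\rho_n|_1\,|B(\alpha,\widehat{\rho_n})|\to\infty$; but this forces a separate treatment of the case $\ell\in V^\perp$, i.e.\ the affine one, so I prefer the pigeonhole argument above.)
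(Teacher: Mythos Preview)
Your argument is correct. The translation of the hypothesis via Proposition~\ref{prop:dihedral}(ii)(a), the finiteness of the Gram tuples via Proposition~\ref{prop:dihedral}(i), and the pigeonhole conclusion using the radical are all sound; the step $B(\rho,\rho')=B(\rho,\rho)+B(\rho,\rho'-\rho)=1$ is exactly the right way to extract the contradiction.

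This is a genuinely different route from the paper's. The paper invokes the theory of small roots: under the hypothesis every covering edge in the root poset is short, so every positive root is small, and one concludes by the Brink--Howlett theorem that the set of small roots is finite. Your argument bypasses that machinery entirely, relying only on Proposition~\ref{prop:dihedral}(i) (the finiteness of the set of small Gram values against simple roots) together with an elementary pigeonhole and radical computation. What you gain is self-containment within the paper: Proposition~\ref{prop:dihedral}(i) is already stated and used earlier (it underlies Lemma~\ref{prop:dpnorm}(i)), so you avoid importing the Brink--Howlett finiteness theorem from \cite{brink-howlett}. What the paper's route offers instead is a direct conceptual link to the combinatorics of the root poset, which is natural if one is already thinking in those terms. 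Your closing remark is apt: the pigeonhole argument handles the degenerate (radical) case uniformly, whereas a limit-root approach would indeed need Proposition~\ref{prop:noninter} or an affine case split.
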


\begin{proof}
We use the concept of small roots, described for instance in
\cite[\S4.7]{bjorner-brenti}; all the references in this proof are to
this book.  If every reflection subgroup of rank~$2$ of~$W$ is finite,
then any covering edge of the root poset on $\pp$ (see Def.~4.6.3) is
a short edge (see \S4.7), and any root is a small root. But it is
known that the set of small roots is finite (Theorem~4.7.3, taken from
Brink-Howlett \cite{brink-howlett}), so $\Phi$ is finite and~$W$ must
be finite as well.
 \end{proof}

\noindent This concludes the proof of Theorem~\ref{thm:density0} and therefore the proof of Theorem~\ref{thm:density}.

%%%%%%%%%%%%%%%%%%%%%%%%%%%%%%%%%%%%%%%%%%%%%%%%%%%%%%%%%%%%%%
%
%   SECTION : HOW TO CONSTRUCT THE LIMIT  ROOTS FOR REFLECTION SUBGROUPS
%
%%%%%%%%%%%%%%%%%%%%%%%%%%%%%%%%%%%%%%%%%%%%%%%%%%%%%%%%%%%%%%

\section{How to construct the limit roots for reflection subgroups}  \label{sec:subgps}

In this section, we explain how to slightly extend the definition of $E$ so that it holds for a reflection subgroup $W'$ of a Coxeter
group $W$. To do so we extend the definition of limit roots such that it applies even when the set of simple roots is not a
basis. In the last subsection, we also detail the relations between the set of limit roots associated to a reflection subgroup of $W$
(especially to a parabolic subgroup) and the set $E$ associated to $W$.

%   Subsection : Root subsystems of a based root system
%
%%%%%%%%%%%%%%%%%%%%%%%%%%%%%%%%%%%%%%%%%%%%%%%%%%%%%%%%%%%%%%

\subsection{Root subsystems of a based root system} \label{subsec:subsystems}
~

First, we define the notion of a root subsystem. Let~$(\Phi,\Delta)$ be a based root system of $(V,B)$ with associated Coxeter
system~$(W,S)$, according to Definition~\ref{def:root}. Consider a finitely generated reflection subgroup $W'$ of $W$, together
with its canonical set $S'$ of Coxeter generators, as explained in \S\ref{subsec:reflsg}. There is a natural way to construct a root
system for $W'$ from the one for $W$. If
\[
\Phi':=\{\rho \in \Phi \, | \, s_\rho \in W'\} \quad\text{and}  \quad \Delta':=\{\rho \in \Phi \, | \, s_\rho \in S' \},
\]
then $(\Phi',\Delta')$ is a based root system of $(V,B)$ with associated Coxeter system~$(W',S')$ (see~\cite[Lemma~3.5]{bonnafe-dyer}
for details). Note that $\Delta'\subseteq \Phi^+$ does not have to intersect~$\Delta$.  We call $(\Phi',\Delta')$ a {\em based root
subsystem of~$(\Phi,\Delta)$}. In \S\ref{subsec:examplesE}, we already mentioned the specific case of \emph{standard parabolic}
root subsystems, which correspond to standard parabolic Coxeter subgroups of $W$.

The simplest definition of $E(\Phi')$ in this setting is to consider the subset of $E(\Phi)$ consisting of all the points which are limits
of sequences in $\h{\Phi'}$. A natural question arises: is this definition consistent with the intrinsic one, when we consider
~$(\Phi',\Delta')$ as a root system by itself? The first problem here is that, in \S\ref{sec:limit}, we only defined~$E$ when the set
of simple roots is linearly independent, and this is not always the case for $(\Phi',\Delta')$, as illustrated in the following example.

\begin{ex} \label{ex:HigherRank}
Consider the Coxeter group $W$ of rank $3$ with ${S=\{s,t,u\}}$ and
$m_{s,t}=m_{t,u}=m_{s,u}=4$ (whose Coxeter diagram is on
Figure~\ref{fig:E'}). Let $(V,B)$ be the classical geometric
representation of $W$. Consider the reflection subgroup $W'$ generated
by $S'=\{s,u,tst,tut\}$. It is easy to check that its simple system
$\Delta'$ is~$\{\alpha_s,\alpha_u,t(\alpha_s),t(\alpha_u)\}$ and
satisfies the conditions of Definition~\ref{def:root} (Condition~(i) is
critical here). So $(W',S')$ is a Coxeter group of rank $4$ and $V$ is
a geometric~$W'$-module of dimension $3$ with based root system
$(\Phi',\Delta')$ where $\Phi'$ is the set of roots~$\rho$ in~$\Phi$
such that~$s_\rho \in W'$.  Consider now the matrix
$A=(B(\alpha_x,\alpha_y))_{x,y\in S'}$, written accordingly to
$(\alpha_s,\alpha_u,t(\alpha_s),t(\alpha_u))$:
\[
A=\left(
\begin{array}{cccc}
1&-\frac{\sqrt 2}{2}&0&-1-\frac{\sqrt 2}{2}\\
-\frac{\sqrt 2}{2}&1&-1-\frac{\sqrt 2}{2}&0\\
0&-1-\frac{\sqrt 2}{2}&1&-\frac{\sqrt 2}{2}\\
-1-\frac{\sqrt 2}{2}&0&-\frac{\sqrt 2}{2}&1
\end{array}
\right).
\]
This matrix satisfies Equation~\eqref{equ:matrice}, so we can construct (as in \S\ref{subsec:georep2}) the canonical geometric
$W$-module $(V_A,B_A)$, of dimension $4$, associated to $W'$. Observe that~$(V_A,B_A)$ is not the classical geometric
representation of $W'$.
\end{ex}

Such examples explain why a satisfying definition of a based root
system should not require the simple roots to form a basis, but only
to be positively independent (Condition~(i) in
Definition~\ref{def:root}). They also raise the question of the
possible relations between $(V,B)$ and $(V_A,B_A)$ as $W'$-modules
(see \S\ref{subsec:Wmodules}).

%   Subsection : How to cut the rays of roots in general: hyperplanes transverse to~$\Phi^+$
%
%%%%%%%%%%%%%%%%%%%%%%%%%%%%%%%%%%%%%%%%%%%%%%%%%%%%%%%%%%%%%%

\subsection{How to cut the rays of roots in general: hyperplanes transverse to~\texorpdfstring{$\Phi^+$}{Phi+}} \label{sse:transverse}
~

In \S\ref{sec:limit} we constructed the set $E$ of limit roots from \emph{normalized} roots, by cutting the rays of positive roots
with the hyperplane $V_{1}$. To be able to define properly~$E$ in the case where $\Delta$ is not necessarily a basis, we need
to find analogues for the cutting hyperplane $V_1$ and the normalization map $\h{\cdot}$.

Even when $\Delta$ is a basis, it is natural to ask the following question: since $\h{\Phi}$ is an affine representative of
$\mathbb P\Phi$ obtained by ``cutting'' $\Phi$ by a particularly well-chosen affine hyperplane, what happens if we change the
affine hyperplane, i.e., the representative of $\mathbb P\Phi$? For instance, in the case of an affine dihedral group (see
Example~\ref{ex:affine}), if we cut the rays of $\Phi$ by an affine line directed by $\alpha+\beta$ (thus parallel to the radical of
$B$), then the points representing the roots are not all contained in the convex hull of the points representing the simple roots,
and these normalized roots diverge to infinity. This fact suggests the following general definition, that also takes care of the case
where $\Delta$ is not a basis (but still positively independent).

\begin{defi} \label{def:transverse}
Let $(\Phi,\Delta)$ be a based root system in $(V,B)$. We call an affine hyperplane~$H$ {\em transverse to $\Phi^+$} if the
intersection of $H$ with any ray $\mathbb R^{+}\alpha$ directed by a simple root $\alpha\in \Delta$ is a nonzero point, i.e., if
$\mathbb R^{+*}\alpha\cap H$ is a singleton for all~$\alpha\in\Delta$.
\end{defi}

If $\Delta$ is a basis for $V$, the affine hyperplane $V_1$ we used to cut the rays of $\Phi$ is obviously transverse.  In the case
where $\Delta$ is only positively independent, the definition of $V_1$ does not work (it could be equal to $V$). But transverse
hyperplanes still exist. Consider a linear hyperplane $H_0$, separating $\Phi^+$ and $\Phi^-$: it exists because $\Delta$ is
positively independent (see Remark~\ref{rk:positively}). Then the affine hyperplane~${H:=\sum_{\alpha\in\Delta} \alpha +H_0}$
is transverse.

Let $H$ be a transverse hyperplane directed by a hyperplane $H_0$. Let $H_0^+$ be the open halfspace supported by $H_0$
and containing $H$ ($H$ is parallel to $H_0$ and $0$ cannot be in $H$ by definition). So $H_0^+$ contains also $\Delta$ and
therefore $\Phi^+$, which is a subset of $\cone(\Delta)$. In particular, we obtain easily the following statement: an affine
hyperplane $H$ is transverse if and only if $\mathbb R^{+*}\beta\cap H$ is a singleton for all $\beta\in\Phi^+$.

Like for $V_1$, a normalization map can be applied to~$\Phi$:
\[
\begin{array}{ccl}
  \pi_H:V\setminus H_0 & \to & H %\\
%  v & \mapsto & \widehat{v}:=\pi_H(v)}
\end{array}
\]
where $\pi_H(v)$ is the intersection point of $\mathbb R v$ with
$H$. For instance, when $\Delta$ is a basis,~${\pi_{V_1}(v)=\h v}$.
Note that since $H$ is transverse, $\pi_H(\Phi)$ is contained in the
polytope $\conv(\pi_H(\Delta))$.  Denote by $E(\Phi,H)$ the set of
accumulation points of $\pi_H(\Phi)$. The following straightforward
proposition states that the topology of our object of
study,~$E(\Phi)$, does not depend on the choice of the transverse
hyperplane chosen to cut the rays of~$\Phi$, as suggested by the point
of view of projective geometry.

\begin{prop} \label{prop:transverse}
  Let $(\Phi,\Delta)$ be a based root system, and let $H$, $H'$ be two
  hyperplanes, transverse to $\Phi^+$. Then $\pi_H$ induces a
  homeomorphism from $\conv(\pi_{H'}(\Delta))$ to
  $\conv(\pi_H(\Delta))$, whose inverse is the restriction of
  $\pi_{H'}$. Moreover, $\pi_H$ maps (bijectively) $\pi_{H'}(\Phi)$ to
  $\pi_{H}(\Phi)$ and $E(\Phi,H')$ to $E(\Phi,H)$.
\end{prop}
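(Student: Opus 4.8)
The plan is to make the radial projection $\pi_H$ completely explicit, and then deduce every assertion from the single observation that $\conv(\pi_H(\Delta))$ and $\conv(\pi_{H'}(\Delta))$ are two slices of the \emph{same} cone $\cone(\Delta)$. First I would fix a linear form $f$ on $V$ and a scalar $c$ with $H=\{v\in V\mid f(v)=c\}$ and $H_0=\ker f$; after replacing $(f,c)$ by $(-f,-c)$ if needed we may assume $c>0$ and $f(\alpha)>0$ for all $\alpha\in\Delta$ (this is exactly the half-space remark preceding the statement: transversality together with the positive independence of $\Delta$ put $\Delta$, hence the open cone $\cone(\Delta)\setminus\{0\}$, into $H_0^+=\{f>0\}$). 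Then for $v\notin H_0$ one has $\pi_H(v)=\tfrac{c}{f(v)}v$, which is continuous on $V\setminus H_0$ and in particular on $\cone(\Delta)\setminus\{0\}$; the same applies to $\pi_{H'}$.

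The geometric heart of the argument is the identity $\cone(\Delta)\cap H=\conv(\pi_H(\Delta))$ (and its analogue for $H'$). To prove it I would take $v=\sum_{\alpha\in\Delta}\mu_\alpha\alpha\in\cone(\Delta)$ with $f(v)=c$, and rewrite it as $v=\sum_{\alpha}\nu_\alpha\,\pi_H(\alpha)$ where $\nu_\alpha=\mu_\alpha f(\alpha)/c\geq 0$; then $\sum_\alpha\nu_\alpha=f(v)/c=1$, so $v$ is a convex combination of the $\pi_H(\alpha)$, giving one inclusion, and the reverse inclusion is immediate. Thus both convex hulls in the statement are precisely the sections of $\cone(\Delta)$ by $H$ and $H'$, and each is compact because $\Delta$ is finite.

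Next I would verify that $\pi_H$ and $\pi_{H'}$ restrict to mutually inverse maps between these two sections. For $x\in\conv(\pi_{H'}(\Delta))\subseteq\cone(\Delta)\setminus\{0\}$ the point $\pi_H(x)$ lies on the ray $\mathbb R^{+*}x$, hence stays in $\cone(\Delta)$ and (being in $H$) in $\conv(\pi_H(\Delta))$; moreover $\pi_{H'}(\pi_H(x))$ is the unique point of $\mathbb R^{+*}x\cap H'$, namely $x$ itself. The symmetric computation yields $\pi_H\circ\pi_{H'}=\mathrm{id}$ on $\conv(\pi_H(\Delta))$, so, both maps being continuous, $\pi_H$ is a homeomorphism with inverse $\pi_{H'}$; this is the first assertion. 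The same ray computation gives $\pi_H(\pi_{H'}(\rho))=\pi_H(\rho)$ for every $\rho\in\pp$ (both points sit on $\mathbb R^{+*}\rho$), and since $\pi_H(\Phi)=\pi_H(\pp)$ and $\pi_{H'}(\Phi)=\pi_{H'}(\pp)$ (the line $\mathbb R\rho$ meets $H$ in a single point), $\pi_H$ carries $\pi_{H'}(\Phi)$ onto $\pi_H(\Phi)$, bijectively because it is already a bijection on the ambient sections.

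Finally, for the accumulation sets I would invoke compactness. Since $\pi_{H'}(\Phi)\subseteq\conv(\pi_{H'}(\Delta))$ and the latter is compact, every accumulation point of $\pi_{H'}(\Phi)$---that is, every point of $E(\Phi,H')$---already lies in $\conv(\pi_{H'}(\Delta))$, where the homeomorphism $\pi_H$ is defined. A homeomorphism preserves the property of being an accumulation point of a given set, so $\pi_H$ maps $E(\Phi,H')$ bijectively onto $E(\Phi,H)$. I expect no real difficulty here: the only steps requiring genuine care are checking that $\pi_H$ is defined on all of $\conv(\pi_{H'}(\Delta))$ (guaranteed by the half-space remark) and the rescaling identity for the sections; everything else is formal.
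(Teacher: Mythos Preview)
Your proof is correct and follows essentially the same approach as the paper's: both arguments rest on the single observation that $\pi_H(v)$ and $\pi_{H'}(v)$ lie on the ray $\mathbb{R}^{+*}v$, so the two radial projections are mutual continuous inverses once one checks that $\conv(\pi_{H'}(\Delta))$ avoids $H_0$. The paper's proof is just a terser version of yours; your explicit section identity $\cone(\Delta)\cap H=\conv(\pi_H(\Delta))$ and the formula $\pi_H(v)=\tfrac{c}{f(v)}v$ make precise what the paper leaves ``by construction,'' but the underlying idea is identical.
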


\begin{proof}
  For $v\in V$, $\pi_H(v)$ and $\pi_{H'}(v)$ (when they are defined) are
  colinear to $v$. As~$\Delta$ is contained in $V \setminus H_0$, so
  are $\pi_{H'}(\Delta)$ and $\conv(\pi_{H'}(\Delta))$. So, the
  mentioned restrictions of $\pi_H$ and $\pi_{H'}$ are, by
  construction, continuous, mutual inverses, and swap $\pi_{H'}(\Phi)$
  and $\pi_{H}(\Phi)$. Consequently, they also swap the sets of limit
  points of these two sets, i.e., $E(\Phi,H')$ and $E(\Phi,H)$.
\end{proof}

\begin{remark}
  The geometric action of $W$ defined in \S\ref{subsec:action} does
  not depend on the choice of the transverse hyperplanes employed to
  cut $\Phi$: if $H$ and $H'$ are two hyperplanes that are transverse
  to $\Phi^+$, then the map $\pi_{H'}$ induces by restriction an
  isomorphism of $W$-sets from $\pi_H(\Phi)\sqcup E(\Phi,H)$ to
  $\pi_{H'}(\Phi)\sqcup E(\Phi,H')$.
\end{remark}

\begin{remark}
  Changing the transverse hyperplane can still change the geometry
  overall. For instance, consider a rank $3$ based root system with
  $B$ of signature~$(2,1)$, so that $Q$ is a circular cone. Let $H$ be
  a transverse affine plane. In general, the intersection $Q\cap H$ of
  $H$ with the isotropic cone $Q$ can be an ellipse, a parabola or an
  hyperbola. In the case of an hyperbolic cut, only one branch
  intersects $\conv(\pi_H(\Delta))$. In a following paper~\cite{dhr},
  we prove and use the fact that for a hyperbolic group of rank $3$,
  there always exists a hyperplane $H$, transverse to~$\Phi^+$, such
  that $Q\cap H$ is a circle (the analogous property is actually valid
  for hyperbolic groups of higher rank).
\end{remark}

For now, we apply this result to explain in the following subsection that all the results of \S\S\ref{sec:limit}-\ref{sec:action}-\ref{sec:dense}
transfer easily to the general case where $\Delta$ is not necessarily a basis.

%   Subsection : From the linearly independent case to the positively independent case
%
%%%%%%%%%%%%%%%%%%%%%%%%%%%%%%%%%%%%%%%%%%%%%%%%%%%%%%%%%%%%%%

\subsection{From the linearly independent case to the positively independent case}   \label{subsec:Wmodules}
~

In the case where $\Delta$ is not a basis, there is another way to study the limit points of roots (without using general transverse
hyperplanes), by pulling back all the structure to a space where $\Delta$ \emph{is} a basis, using the canonical geometric
$W$-module defined in \S\ref{subsec:georep2}. let us first give the following definition (motivated by Example~\ref{ex:HigherRank}).

\begin{defi}
Let $(W,S)$ be a Coxeter group and $A=(a_{s,t})$ be a matrix that satisfies Equation~\eqref{equ:matrice}. A pair $(V,B)$ is a
{\em geometric $W$-module associated to $A$} if
\begin{enumerate}[(i)]
\item $V$ is a finite dimensional real vector space containing a subset $\Delta={\{\alpha_s\,|\,s\in S\}}$ that is positively independent;

\item $B$ is a bilinear form such that $B(\alpha_s,\alpha_t)=a_{s,t}$ for all $s,t\in S$.
\end{enumerate}
In this case, since $\Delta$ satisfies the requirement of Definition~\ref{def:root},  $(V,B)$ is a geometric $W$-module (as in
Definition~\ref{def:root}); the associated based root system is called the {\em based root system associated to $(V,B)$}.
\end{defi}

Note that any geometric $W$-module $(V,B)$ of Definition~\ref{def:root} is a geometric $W$-module associated to a certain
matrix $A$ determined by Conditions~(ii)~and~(iii) of Definition~\ref{def:root}.

Obviously, for each matrix $A$ satisfying Equation~\eqref{equ:matrice}, the simplest geometric $W$-module associated to $A$ is
the ``canonical'' geometric $W$-module $(V_A,B_A)$ (defined in \S\ref{subsec:georep2}), where we declare $\Delta$ as a basis.
However, as Example~\ref{ex:HigherRank} shows, the set of geometric modules $(V,B)$---up to isomorphism---associated to
$A$ may contain more than the class of $(V_A,B_A)$. Still, the canonical geometric $W$-module associated to~$A$ carries all the
combinatorics of the based root system of~$A$, as described in the following proposition.\footnote{This proposition was suggested
by M.~Dyer (personal communication, September 2011), see also \cite[Proposition 2.1]{fu1}.}

\begin{prop} \label{prop:bijection}
Let $(W,S)$ be a Coxeter system and $A$ be an associated matrix satisfying Equation~\eqref{equ:matrice}. Let $(V_A,B_A)$ be the
canonical geometric $W$-module, and~$(V,B)$ be a geometric $W$-module associated to $A$, with based root system $(\Phi,\Delta)$.
Then there is a morphism of $W$-modules $\varphi_A:V_A\to V$, preserving $B$, that restricts to a bijection from $\Phi_A$ to~$\Phi$,
such that~$\varphi_A(\Delta_A)=\Delta$ and $\varphi_A(\Phi_A^+)=\Phi^+$.
\end{prop}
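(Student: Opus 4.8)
The plan is to build $\varphi_A$ as the unique linear extension of the assignment $\alpha_s\mapsto\beta_s$, where I write $\Delta_A=\{\alpha_s\}_{s\in S}$ for the basis of $V_A$ and $\Delta=\{\beta_s\}_{s\in S}$ for the simple system of $(\Phi,\Delta)$ in $V$ (so $B(\beta_s,\beta_t)=a_{s,t}$). Such a map exists and is unique precisely because $\Delta_A$ is a basis of $V_A$. First I would record the three structural properties. That $\varphi_A$ preserves the forms is immediate on the basis, $B(\varphi_A(\alpha_s),\varphi_A(\alpha_t))=B(\beta_s,\beta_t)=a_{s,t}=B_A(\alpha_s,\alpha_t)$, and bilinearity propagates $B\circ(\varphi_A\times\varphi_A)=B_A$ from $\Delta_A$ to all of $V_A$. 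That $\varphi_A$ is a morphism of $W$-modules I would check on generators: for $s\in S$ and $u\in V_A$,
\[
\varphi_A(s_{\alpha_s}(u))=\varphi_A(u)-2B_A(\alpha_s,u)\,\beta_s=\varphi_A(u)-2B(\beta_s,\varphi_A(u))\,\beta_s=s_{\beta_s}(\varphi_A(u)),
\]
using the $B$-preservation just established; since each generator acts as the corresponding reflection, equivariance for all of $W$ follows.

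Granting equivariance together with $\varphi_A(\Delta_A)=\Delta$, the identity $\Phi=W(\Delta)$ gives $\varphi_A(\Phi_A)=\varphi_A(W(\Delta_A))=W(\Delta)=\Phi$, so $\varphi_A$ maps $\Phi_A$ onto $\Phi$. For the positivity claim, a positive root $\rho=\sum_s c_s\alpha_s\in\Phi_A^+$ with all $c_s\geq0$ has image $\sum_s c_s\beta_s\in\cone(\Delta)$, hence $\varphi_A(\rho)\in\Phi\cap\cone(\Delta)=\Phi^+$; thus $\varphi_A(\Phi_A^+)\subseteq\Phi^+$. Since $\varphi_A$ is linear it commutes with the sign, so the decompositions $\Phi_A=\Phi_A^+\sqcup(-\Phi_A^+)$ and $\Phi=\Phi^+\sqcup(-\Phi^+)$, combined with the inclusion just obtained and its negative, force $\varphi_A(\Phi_A^+)=\Phi^+$ and $\varphi_A(-\Phi_A^+)=-\Phi^+$.

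The remaining and main difficulty is injectivity of $\varphi_A$ on $\Phi_A$, which cannot come from injectivity on $V_A$: indeed $\varphi_A$ has nontrivial kernel exactly when $\Delta$ fails to be a basis, which is the whole content of Example~\ref{ex:HigherRank}. So I would extract injectivity from the reflection combinatorics. The key identity is $g\,s_\gamma\,g^{-1}=s_{g(\gamma)}$ for $g$ preserving the form; applied with $g=w$ to a root $\rho=w(\alpha_s)\in\Phi_A$ it gives $s_\rho=wsw^{-1}$ in $W$, and by equivariance $\varphi_A(\rho)=w(\beta_s)$ with $s_{\varphi_A(\rho)}=wsw^{-1}$ as well. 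Hence $\varphi_A$ sends each root to a root carrying the \emph{same} reflection of $W$. Now suppose $\varphi_A(\rho)=\varphi_A(\rho')$ with $\rho,\rho'\in\Phi_A^+$; then $s_\rho=s_{\varphi_A(\rho)}=s_{\varphi_A(\rho')}=s_{\rho'}$ as elements of $W$. Reading these as operators on $V_A$, equal group elements act identically, and the defining formula \eqref{eq:reflection} shows $s_\rho$ negates $\rho$ and fixes $H_\rho$, so its $(-1)$-eigenspace is exactly $\mathbb R\rho$; therefore $\mathbb R\rho=\mathbb R\rho'$, and $\mathbb R\rho\cap\Phi_A=\{\rho,-\rho\}$ together with positivity forces $\rho=\rho'$. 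With the sign decomposition from the previous paragraph this yields that $\varphi_A$ restricts to a bijection $\Phi_A\to\Phi$ with $\varphi_A(\Phi_A^+)=\Phi^+$ and $\varphi_A(\Delta_A)=\Delta$, as required. The one point I would state with care is the passage ``$s_\rho=s_{\rho'}$ in $W$ $\Rightarrow$ equal $(-1)$-eigenlines in $V_A$'', which needs only that equal elements of $W$ act by the same operator on $V_A$ — no faithfulness is required for this direction.
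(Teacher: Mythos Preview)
Your construction of $\varphi_A$ and the verification that it is a form-preserving morphism of $W$-modules match the paper's proof exactly. The difference is that where the paper declares the bijectivity of $\varphi_A|_{\Phi_A}$ ``nontrivial'' and simply cites \cite[Proposition~2.1]{fu1}, you supply a self-contained argument via the root--reflection correspondence: equal images in $\Phi$ force equal reflections in $W$, hence equal $(-1)$-eigenlines in $V_A$, hence (by positivity) equal roots. This is correct and is the standard route to such a statement, so your proof is strictly more complete than the paper's own.

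One small precision worth adding: your closing remark rightly observes that the implication ``$s_\rho=s_{\rho'}$ in $W\Rightarrow$ same operator on $V_A$'' needs no faithfulness. But the preceding step---passing from $\varphi_A(\rho)=\varphi_A(\rho')$ in $V$ to $s_\rho=s_{\rho'}$ as elements of the abstract $W$---does use faithfulness of the $W$-action on $V$, so that the operator $s_{\varphi_A(\rho)}\in\OO_B(V)$ determines a unique element of $W$. This faithfulness is part of the framework (it is the content of \cite[Theorem~1.2]{vinberg} invoked after Definition~\ref{def:root}), so there is no gap, but it would be cleaner to name it explicitly alongside your remark about the other direction.
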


\begin{proof}
Write $\Delta=\{\alpha_s\,|\, s\in S\}$ and $\Delta_A=\{\gamma_s\,|\, s\in S\}$. So
\[
B(\alpha_s,\alpha_t)=a_{s,t}=B_A(\gamma_s,\gamma_t).
\]
Now let
\[
\varphi_A: V_A\to V
\]
be the linear map defined by mapping any element $\gamma_s$ of the basis $\Delta_A$ of $V_A$ to $\alpha_s$. We have
$B\big(\varphi_A(\gamma_s),\varphi_A(\gamma_t)\big)=B(\alpha_s,\alpha_t)=B_A(\gamma_s,\gamma_t)$; so that it is easy to check
that $\varphi_A$ is a morphism of $W$-modules, preserving $B$.

\noindent The fact that $\varphi_A$ restricts to a bijection from $\Phi_A$ to $\Phi$ is nontrivial, see \cite[Proposition 2.1]{fu1}.
Finally, as by construction we have $\varphi_A(\Delta_A)=\Delta$, we get directly that $\varphi_A$ also maps $\Phi_A^+$ to $\Phi^+$.
\end{proof}

Proposition~\ref{prop:bijection} tends to imply that in the previous sections we did not lose much generality by studying only root
systems where $\Delta$ is a basis.

Indeed, let $(V,B)$ be a geometric $W$-module with based root system $(\Phi,\Delta)$, where $\Delta$ is not necessarily linearly
independent. Then we know that $(V,B)$ is a geometric $W$-module associated to a matrix $A$; but we can also construct,
from \S\ref{subsec:georep2}, the canonical geometric $W$-module $(V_A,B_A)$ and the based root system $(\Phi_A,\Delta_A)$
for which~$\Delta_A$ is a basis. The proposition above tells us that the map $\varphi_A$ is a morphism of $W$-modules carrying
bijectively the combinatorics of the root system $\Phi_A$ to that of the root system $\Phi'$.  Moreover, since $\varphi_A$ is a linear map,
it is continuous, and the topological properties we established in $V_A$ are carried to $V$ through $\varphi_A$.

In~$V_A$, the simple system $\Delta_A$ is a basis, so we can apply our usual construction to $(\Phi_A,\Delta_A)$, building
the hyperplane $(V_A)_1$, the normalized roots $\widehat{\Phi_A}$ and the limit roots $E(\Phi_A)$. Now choose a hyperplane
$H$ in $V$, transverse to $\Phi^+$. Thanks to Proposition~\ref{prop:bijection}, we see that
\[
{{\conv\big(\pi_H(\Delta)\big)=\pi_H\circ \varphi_A\big(\conv(\Delta_A)\big)}}\ \textrm{  and }\ {{\pi_H(\Phi)=\pi_H\circ\varphi_A(\widehat{\Phi_A})}}.
\]
Since $\varphi_A$ and $\pi_H$ are continuous, $\pi_H\circ\varphi_A$ is continuous. Therefore we obtain:
\[
E(\Phi,H)=\pi_H\circ\varphi_A\big(E(\Phi_A)\big).
\]
Thus, the sets of normalized roots and limit roots of a based root system $(\Phi,\Delta)$ attached to a matrix $A$ are the images
of the same objects for the based root system~$(\Phi_A,\Delta_A)$. Overall, this means that all the properties we proved in the
previous sections in the case where $\Delta$ is a basis are simply transferred to the general case.

However it is not clear whether the map $\pi_H\circ\varphi_A$ induces a bijection between~$E(\Phi_A)$ and $E(\Phi,H)$: we
do not know if in general $E(\Phi,H)$ is a faithful image of~$E(\Phi_A)$; but $E(\Phi,H)$ is always a projection of $E(\Phi_A)$
by construction.

%   Subsection : On limit roots for reflection subgroups
%
%%%%%%%%%%%%%%%%%%%%%%%%%%%%%%%%%%%%%%%%%%%%%%%%%%%%%%%%%%%%%%

\subsection{On limit roots for reflection subgroups} \label{sse:reflectionsg}
~

We now address how to define the limit roots for a reflection
subgroup. Let~$({\Phi,\Delta)}$ be a based root system of $(V,B)$ with
associated Coxeter system $(W,S)$. Fix $H$ a hyperplane that is
transverse to $\Phi^+$, according to Definition~\ref{def:transverse}
(e.g., take~${H=V_{1}}$ if $\Delta$ is a basis). Let $(\Phi',\Delta')$
be a based root subsystem of~$(\Phi,\Delta)$. Then:
\begin{itemize}
\item The hyperplane $H$ is also transverse to $\Phi'^+$.
\item So, we define intrinsically the set $E(\Phi',H)$, constituted
  with the limit points of $\pi_{H}(\Phi')$.
\item Obviously $E(\Phi',H)$ is a subset of $E(\Phi,H)$.
\end{itemize}

So, as wanted in \S\ref{subsec:subsystems}, we get an intrinsic
definition of the limit roots for a root subsystem, which is
compatible with the inclusion inside the whole set of limit
roots. Moreover, thanks to \S\ref{subsec:Wmodules}, if we set
\begin{itemize}
\item $A$ the matrix associated to the root subsystem $(\Phi',\Delta')$,
\item $(V_A,B_A)$ the canonical geometric $W'$-module for $A$, with
  based root system $(\Phi_{A},\Delta_{A})$,
\item $\varphi_A:V_A \to V$ the associated map defined in
  Proposition~\ref{prop:bijection},
\item $E(\Phi_A)$ the set of limit roots for $(\Phi_{A},\Delta_{A})$,
\end{itemize}
then $E(\Phi',H)$ is the renormalization of the image under
$\varphi_A$ of $E(\Phi_A)$
\[
E(\Phi',H)=\pi_{H}(\varphi_A(E(\Phi_A))).
\]

In the following, to lighten the notations, the cutting hyperplane $H$ is implicit.

The set $E(\Phi')$ is always included in $\conv(\Delta')$ (and in $E(\Phi)$). One can wonder whether the inclusion
$E(\Phi')\subseteq E(\Phi)\cap \conv(\Delta')$ is an equality. Consider the most simple case, where the reflection subgroup
$W'$ is in fact a standard parabolic subgroup. Let~$I$ be a subset of~$S$, $\Delta_I=\{\alpha_s\,|\,s\in I\}$ and $\Phi_I$ its
orbit under $W_I=\langle I\rangle$ (i.e., a parabolic root subsystem). Denote by $F_I$ the convex hull of $\Delta_I$, and
$E(\Phi_I)$ the set of accumulation points of $\h{\Phi_I}$, which lives inside $F_I$. In this case, the question is: if a  limit root
lies in the face $F_{I}$ of the simplex $\conv(\Delta)$, is this a limit point of normalized roots of $W_{I}$? Surprisingly enough,
the answer can be negative, as shown by the following counterexample.

\begin{ex} \label{ex:cex}
Take the rank $5$ root system $\Phi$ with
$\DD=\{\alpha,\beta,\gamma,\delta,\eps\}$ and the labels
$m_{\alpha,\beta} = m_{\delta,\eps}=\oo$, $m_{\beta,\gamma} =
m_{\gamma,\delta}=3$, and the others equal to $2$.  Take~$\Delta_I=\DD
\setminus \{\gamma\}$, so that $W_I$ is the direct product of two
infinite dihedral groups. Then we have
$E(\Phi_I)=\{\frac{\alpha+\beta}{2}, \frac{\delta + \eps}{2}\}$ (see
\S\ref{subsec:examplesE}). But if we consider
$\rho_n=(s_{\alpha}s_\beta s_\eps s_\delta)^n(\gamma)$, it is easy to
check that $\h{\rho_n}$ tends to $\frac{\alpha+\beta + \delta +
  \eps}{4}$, that lies in $E(\Phi)\cap F_I$ but not in $E(\Phi_I)$.
\end{ex}

In a subsequent paper \cite{dhr}, we show that this property holds nevertheless for the set $E_2$ studied in \S\ref{sec:dense},
i.e., $E_2(\Phi_I)=E_2(\Phi)\cap F_I$. We also define other natural smaller {\em dense} subsets of $E$ for which this property
of parabolic restriction works.

%   Subsection : Acknowledgements
%
%%%%%%%%%%%%%%%%%%%%%%%%%%%%%%%%%%%%%%%%%%%%%%%%%%%%%%%%%%%%%%

\subsection*{Acknowledgements.}

We would like to thank Matthew Dyer for having communicated to us his
preprint \cite{dyer:imaginary}, for many fruitful discussions at LaCIM
in September 2011, for the idea of the counterexample~\ref{ex:cex} and
the suggestion of Proposition~\ref{prop:bijection}. We are also
grateful to the referees of an extended abstract of this article
(submitted to the conference FPSAC 2012) for helpful comments.
Finally, we gratefully thank the anonymous referee for the careful reading of our manuscript, for the references to \cite{Sa91} and \cite[Lemma 6.1.1]{krammer:conjugacy} and for the valuable comments and suggestions, which contributed to improve the quality of this article.

% \bibliographystyle{alpha}
% \bibliography{bibliroots}

\newcommand{\etalchar}[1]{$^{#1}$}
\def\cprime{$'$}

\end{document}